\documentclass[11pt,a4paper]{article}
\usepackage[top=1in, bottom=1.25in, left=1in, right=1in]{geometry}
\usepackage[latin1]{inputenc}
\usepackage[english]{babel}
\usepackage{graphicx,epstopdf}
\usepackage{caption, subcaption,float}
\usepackage{amsthm}
\usepackage{color}
\usepackage{tikz}
\usepackage{relsize}
\usepackage[ruled, linesnumberedhidden,vlined]{algorithm2e} 
\usepackage[round,sort,comma]{natbib}
\usepackage{aliascnt}
\usepackage{hyperref}
\hypersetup{
    colorlinks=true,       
    linkcolor=blue,          
    citecolor=olive,       
    filecolor=magenta,      
    urlcolor=blue          
}

\usetikzlibrary{matrix}
\usepackage{amsmath,amssymb,dsfont}
\pagestyle{headings}
\bibliographystyle{authordate1}
\clubpenalty=100000
\widowpenalty=100000

\begin{document}

\title{Commensurability in Artin groups of spherical type}
\date{\today }
\author{Mar\'{i}a Cumplido and Luis Paris\\  Universit\'{e} de Bourgogne}

\maketitle
\newtheorem{theorem}{Theorem}

\newaliascnt{lemma}{theorem}
\newtheorem{lemma}[lemma]{Lemma}
\aliascntresetthe{lemma}
\providecommand*{\lemmaautorefname}{Lemma}

\newaliascnt{proposition}{theorem}
\newtheorem{proposition}[proposition]{Proposition}
\aliascntresetthe{proposition}
\providecommand*{\propositionautorefname}{Proposition}

\newaliascnt{corollary}{theorem}
\newtheorem{corollary}[corollary]{Corollary}
\aliascntresetthe{corollary}
\providecommand*{\corollaryautorefname}{Corollary}

\theoremstyle{definition}
\newtheorem{definition}[theorem]{Definition}

\theoremstyle{remark}
\newtheorem{remark}[theorem]{Remark}

\newtheorem{claim}{Claim}
\providecommand*{\claimautorefname}{Claim}

\def\N{\mathbb N} \def\SSS{\mathfrak S} \def\Z{\mathbb Z}
\def\Com{{\rm Com}} \def\CCom{{\widetilde{\Com}}} \def\CA{{\rm CA}}
\def\Ker{{\rm Ker}} \def\R{{\mathbb R}} \def\GL{{\rm GL}} 
\def\HH{\mathcal H} \def\C{\mathbb C} \def\P{\mathbb P}
\def\QZ{{\rm QZ}} \def\PP{\mathcal P} \def\MM{\mathcal M}
\def\BB{\mathcal B} \def\Im{{\rm Im}} \def\ord{{\rm ord}}
\def\id{{\rm id}} \def\CC{\mathcal C} \def\S{{\mathbb S}}
\def\AA{\mathcal A} \def\D{\mathbb D} \def\SS{\mathcal S}
\def\ordre{{\rm ordre}}

\newcommand{\myref}[2]{\hyperref[#1]{#2~\ref*{#1}}}

\begin{abstract} We give
an almost complete classification of Artin groups of spherical type up
to commensurability.
Let $A$ and $A'$ be two Artin groups of spherical type, and let $A_1,\dots,A_p$ (resp. $A'_1,\dots,A'_q$) be the irreducible components of $A$ (resp. $A'$). We show that $A$ and $A'$ are commensurable if and only if $p=q$ and, up to permutation of the indices, $A_i$ and $A'_i$ are commensurable for every $i$. We prove that, if two Artin groups of spherical type are commensurable, then they have the same rank. For a fixed $n$, we give a complete classification of the irreducible Artin groups of rank $n$ that are commensurable with the group of type $A_n$. Note that there are 6 remaining comparisons of pairs of groups to get the complete classification of Artin groups of spherical type up to commensurability, two of which have been done by Ignat Soroko after the first version of the present paper.

\medskip
\noindent
\emph{2010 Mathematics Subject Classification}: Primary 20F36, Secondary 57M07; 20B30.

\end{abstract}

\section{Introduction}

We start by recalling the definitions of Coxeter groups and Artin groups.
Let~$S$ be a finite set.
A \emph{Coxeter matrix} over~$S$ is a square matrix $M = (m_{s,t})_{s,t \in S}$ indexed by the elements of~$S$, having coefficients in~$\N \cup \{ \infty \}$, and satisfying $m_{s,s} = 1$ for every $s \in S$,
and $m_{s,t} = m_{t,s} \ge 2$ for every $s,t \in S$, $s \neq t$.
This matrix is represented by a labeled graph~$\Gamma$, called \emph{Coxeter graph} and defined by the following data.
The set of vertices of~$\Gamma$ is~$S$.
Two vertices $s,t \in S$, $s \neq t$, are connected by an edge if $m_{s,t} \ge 3$, and this edge is labeled with~$m_{s,t}$ if $m_{s,t} \ge 4$.

\bigskip\noindent
If $s,t \in S$ and~$m$ is an integer $\ge 2$, we denote by $\Pi (s,t, m)$ the word $sts \cdots$ of length~$m$.
In other words, $\Pi (s,t,m) = (st)^{\frac{m}{2}}$  if~$m$ is even and $\Pi (s,t,m) = (st)^{\frac{m-1}{2}}s$ if~$m$ is odd.
Let~$\Gamma$ be the Coxeter graph associated to such a Coxeter matrix. 
The \emph{Artin group} associated to~$\Gamma$ is the group~$A =A[\Gamma]$ defined by the following presentation.
\[
A[\Gamma] = \langle S \mid \Pi (s,t, m_{s,t}) = \Pi (t,s, m_{s,t}), \text{ for } s,t \in S,\ s \neq t,\ m_{s,t} \neq \infty \rangle\,.
\]
The \emph{Coxeter group}~$W = W[\Gamma]$ of~$\Gamma$ is the quotient of~$A[\Gamma]$ by the relations $s^2=1$, $s \in S$.
We say that $\Gamma$ is of \emph{spherical type} if~$W[\Gamma]$ is finite.

\bigskip\noindent
Let $\Gamma_1, \dots, \Gamma_p$ be the connected components of~$\Gamma$ and, for $i \in \{1, \dots, p\}$, let $S_i$ be the set of vertices of~$\Gamma_i$, $A_i$ be the subgroup of~$A$ generated by~$S_i$ and~$W_i$ be the subgroup of~$W$ generated by~$S_i$.
We can easily check that~$A_i$ is the Artin group of~$\Gamma_i$ and $W_i$ is the Coxeter group of~$\Gamma_i$ for every~$i$, and that $A= A_1 \times \cdots \times A_p$ and $W = W_1 \times \cdots \times W_p$.
In particular, $\Gamma$ has spherical type if and only if~$\Gamma_i$ has spherical type for every $i \in \{ 1, \dots, p\}$. 
The classification of Coxeter graphs of spherical type has been known for a long time and it is given in the following theorem:

\begin{theorem}[\citealp{Coxeter}]
A Coxeter graph~$\Gamma$ is connected and has spherical type if and only if it is isomorphic to one of the graphs $A_n$ ($n \ge 1$), $B_n$ ($n \ge 2$), $D_n$ ($n \ge 4$), $E_n$ ($n \in \{6,7,8\}$), $F_4$, $H_3$, $H_4$ and $I_2 (p)$ ($p \ge 5$) represented in \autoref{coxeter}.
\end{theorem}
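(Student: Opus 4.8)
The plan is to translate finiteness of $W[\Gamma]$ into positive-definiteness of a bilinear form, and then enumerate the graphs that satisfy it. To a Coxeter graph $\Gamma$ with vertex set $S$ one attaches the symmetric bilinear form $B_\Gamma$ on $\mathbb{R}^S=\bigoplus_{s\in S}\mathbb{R}e_s$ determined by $B_\Gamma(e_s,e_s)=1$ and $B_\Gamma(e_s,e_t)=-\cos(\pi/m_{s,t})$ for $s\neq t$ (with the convention $-\cos(\pi/\infty)=-1$). The reflections $\sigma_s(x)=x-2B_\Gamma(e_s,x)e_s$ define a faithful representation $W[\Gamma]\hookrightarrow\mathrm{GL}(\mathbb{R}^S)$ preserving $B_\Gamma$, and the classical criterion I would invoke (see Bourbaki, \emph{Groupes et alg\`ebres de Lie}, Ch.~V, or Humphreys, \emph{Reflection Groups and Coxeter Groups}) is that $W[\Gamma]$ \emph{is finite if and only if $B_\Gamma$ is positive definite}: if $B_\Gamma$ is positive definite then $W[\Gamma]$ is a discrete subgroup of the compact group $\mathrm{O}(B_\Gamma)$, hence finite; conversely a finite $W[\Gamma]$ preserves the positive-definite form obtained by averaging $B_\Gamma$ over the group, and on each irreducible summand this average must be a positive multiple of $B_\Gamma$. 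The key consequence is \emph{heredity}: if $\Gamma'$ is the full subgraph of $\Gamma$ on a subset $S'\subseteq S$, then $B_{\Gamma'}$ is the restriction of $B_\Gamma$ to $\mathbb{R}^{S'}$, so positive definiteness passes to full subgraphs. It therefore suffices to isolate a list of ``forbidden'' connected graphs that are not positive definite, and to check that any connected graph avoiding all of them is one of those in \autoref{coxeter}.

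For the forbidden list I would take, first, the affine Coxeter graphs $\widetilde{A}_n$ (an $(n{+}1)$-cycle), $\widetilde{B}_n$, $\widetilde{C}_n$, $\widetilde{D}_n$, $\widetilde{E}_6$, $\widetilde{E}_7$, $\widetilde{E}_8$, $\widetilde{F}_4$, $\widetilde{G}_2$: each has $B_\Gamma$ positive semidefinite but \emph{degenerate}, with kernel spanned by the standard nonzero vector of nonnegative integer marks. Second, a short list of ``hyperbolic'' graphs needed to bound large labels: the single edge labelled $\infty$; the three-vertex path with labels $m$ and $3$ for $m\ge 7$; the three-vertex paths with labels $\{4,5\}$ and $\{5,5\}$; and the finitely many connected graphs on four or five vertices that contain an edge labelled $5$. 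For each of these, $\det B_\Gamma\le 0$ follows from the obvious determinant recursion along a path (for instance the three-vertex path with labels $a$ and $b$ has $\det B_\Gamma=1-\cos^2(\pi/a)-\cos^2(\pi/b)$), so $B_\Gamma$ is not positive definite.

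Now let $\Gamma$ be connected and positive definite. Avoiding $\widetilde{A}_n$ makes $\Gamma$ a tree; a vertex of degree $\ge 4$ would span a full subgraph $K_{1,4}$, whose form has $\det B_\Gamma=1-\sum_{i=1}^4\cos^2(\pi/m_i)\le 0$, so all degrees are $\le 3$. If some edge has a label $\ge 4$: avoiding $\widetilde{B}_n$, $\widetilde{C}_n$, $\widetilde{F}_4$, $\widetilde{G}_2$ and the hyperbolic graphs above forces $\Gamma$ to be a path carrying exactly one such edge, whose label is $4$ (giving $B_n$, $n\ge 2$, or $F_4$), or $5$ (giving $I_2(5)$, $H_3$ or $H_4$), or $\ge 6$ (giving $I_2(p)$ with $p$ finite, since $I_2(\infty)$ is infinite). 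If every label equals $3$: avoiding $\widetilde{D}_n$ leaves at most one vertex of degree $3$, so $\Gamma$ is either a path — giving $A_n$ — or a tree with a single degree-$3$ vertex whose three branches have lengths $(1,1,k)$ with $k\ge 1$ — giving $D_{k+3}$ — or $(1,2,2)$, $(1,2,3)$, $(1,2,4)$ — giving $E_6,E_7,E_8$ — every longer branching being excluded by $\widetilde{E}_6,\widetilde{E}_7,\widetilde{E}_8$. For the converse, Sylvester's criterion reduces positive-definiteness of each listed graph to a short determinant computation ($\det B=(n+1)/2^n$ for $A_n$, $1/2^{n-1}$ for $B_n$, $1/2^{n-2}$ for $D_n$, an explicit positive number for $E_6,E_7,E_8,F_4,H_3,H_4$, and $\sin^2(\pi/p)>0$ for $I_2(p)$), or one realizes each as a finite reflection group — a Weyl group, or the symmetry group of a regular polytope, e.g.\ the icosahedron for $H_3$ and the $600$-cell for $H_4$. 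The linear algebra is routine; the real work, and the main obstacle, is the endgame above — choosing the forbidden list just large enough that avoiding it genuinely pins down the stated graphs, in particular keeping track of all trees that carry a label $5$ or a combination of labels $\ge 4$, so that none slips through.
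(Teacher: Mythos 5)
The paper does not prove this theorem; it is stated as the classical classification of finite irreducible Coxeter groups and attributed to Coxeter's original work, so there is no in-paper argument to compare against. Your sketch is the standard textbook proof (as in Bourbaki, \emph{Groupes et alg\`ebres de Lie}, Ch.~V--VI, or Humphreys, \emph{Reflection Groups and Coxeter Groups}, \S2): finiteness of $W[\Gamma]$ is equivalent to positive definiteness of the associated Gram form, positive definiteness is hereditary to full subgraphs, the affine (positive semidefinite degenerate) diagrams together with a short list of small hyperbolic diagrams serve as forbidden full subgraphs, and a case analysis on degrees and edge labels then forces $\Gamma$ onto the list, with the converse checked by Sylvester's criterion or by explicit realization as a finite reflection group. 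The outline is correct and you are right that the only real work is verifying that your forbidden list is exhaustive for the case analysis; that bookkeeping (in particular, the rank-$4$ and rank-$5$ graphs containing a label $5$, which are what pin down $H_3$ and $H_4$ and stop at rank $4$) is exactly where the cited references spend their effort, and a complete writeup would need to spell those cases out.
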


\bigskip
\begin{figure}
\centering
\includegraphics[width=5.8cm]{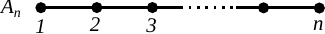}\hskip1cm
\includegraphics[width=5.8cm]{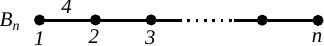}
\vskip 0.5cm
\includegraphics[width=5.8cm]{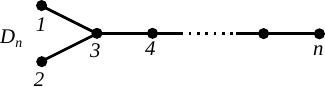}\hskip1cm
\includegraphics[width=4.8cm]{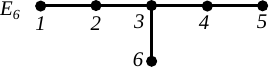}
\vskip 0.5cm
\includegraphics[width=5.8cm]{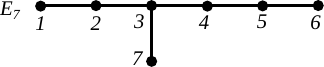}\hskip1cm
\includegraphics[width=6.8cm]{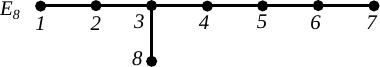}
\vskip 0.5cm
\includegraphics[width=3.8cm]{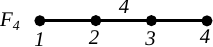}\hskip1cm
\includegraphics[width=2.8cm]{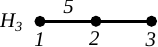}
\vskip 0.5cm
\includegraphics[width=3.8cm]{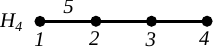}\hskip1cm
\includegraphics[width=2.6cm]{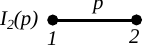}
\vskip 0.5cm
\caption{Coxeter graphs of spherical type}
\label{coxeter}
\end{figure}

\medskip\noindent
Actually, this classification is also the classification of Artin groups of spherical type up to isomorphism because, by \citep[Theorem~1.1]{Paris1}, two Artin groups of spherical type are isomorphic if and only if their associated Coxeter graphs are isomorphic. It is then natural to ask if such a result remains valid when changing the word ``isomorphic'' by ``commensurable''. The answer has been known for a long time: it is \textbf{no} because the Artin groups associated to $A_n$ and $B_n$ are commensurable (see \autoref{AnBn}) and they are not isomorphic by \citep[Theorem~1.1]{Paris1}. However, the classification of Artin groups of spherical type up to commensurability was a very open question before this article. For instance, no example of two non-commensurable Artin groups of spherical type having the same rank was known before.  This article almost gives the entire classification of Artin groups of spherical type up to commensurability, meaning that there are only 6 comparisons of groups that we do not treat. Two of them have been solved by \cite{Soroko} after the first version of the present paper.

\medskip
\noindent
We recall that two groups $G_1$ and $G_2$ are \emph{commensurable} if there are two finite index subgroups~$H_1$ of~$G_1$ and~$H_2$ of~$G_2$ such that~$H_1$ is isomorphic to~$H_2$. The study of commensurability is useful when studying virtual properties of groups. There is also a strong relationship between commensurable groups and quasi-isometric groups. In particular, for a finitely generated group~$G$ endowed with any word metric, the inclusion map of a finite index subgroup in $G$ is a quasi-isometry. This implies that, if two finitely generated groups are commensurable, then they are also quasi-isometric. The converse implication is true only under certain conditions.  

\medskip\noindent
The \emph{commensurator} (also called abstract commensurator) of a group~$G$ will be denoted by $\Com (G)$. We recall its definition.
Let~$\CCom (G)$ be the set of triples~$(U,V,f)$ where~$U$ and~$V$ are finite index subgroups of~$G$, and $f : U \to V$ is an isomorphism.
Let~$\sim$ be the equivalence relation on~$\CCom (G)$ such that $(U,V,f) \sim (U', V', f')$ if there is a finite index subgroup~$W$ of~$U \cap U'$ such that $f(\alpha) = f' (\alpha)$ for every $\alpha \in W$.
Hence we define $\Com (G)$ as $\CCom (G)/ \sim$ and the group operation is induced by the composition. We can easily show that, if~$A$ and~$B$ are two commensurable groups, then~$\Com (A)$ and~$\Com (B)$ are isomorphic. Commensurators are in general difficult to compute. Fortunately, the commensurator of the Artin group associated to~$A_n$ (the braid group) is well understood \citep{CharneyCrisp, LM} and it is indeed used to prove the results in this paper.

\medskip\noindent
So far, the results regarding commensurability for Artin groups in general are quite limited. In \citep{Crisp}, the author studies commensurability for Artin groups of large type (each $m_{s,t}\geq 3$ for $s\neq t$) associated to triangle-free connected Coxeter graphs having at least three vertices. In the last years, the research on this topic has been focused on right-angled Artin groups (RAAGs). A RAAG is an Artin group whose only relations in its presentation are commutations. It is often represented by a \emph{commutation graph}, $\Upsilon$, which is defined by the following data. The set of vertices of~$\Upsilon$ is the set of standard generators of the group.
Two vertices are connected by an edge if and only if the corresponding generators commute. Apart from the classifications made for free and free-abelian groups, commensurability studies are made for RAAGs with commutation graphs $\Upsilon$ in the following cases: 

\begin{itemize}

\item $\Upsilon$ is connected, triangle-free and square-free without any vertices of degree one \citep{KK};

\item  $\Upsilon$  is star-rigid with no induced 4-cycles and the outer
automorphism of the Artin group is finite \citep{Juan};

\item  $\Upsilon$ is a tree of diameter $\leq 4$ \citep{BN, CKZ};

\item $\Upsilon$ is a path graph \citep{CKZ2}. In this work they also compared these commensurability classes to the ones of RAAGs defined by trees of diameter 4.

\end{itemize}

\medskip
\noindent\emph{Remark.}
The results of this paper, notably Part~(3) of \autoref{classification}, are being used in a paper in preparation of Ursula Hamenst\"adt \citep{Ursula} to refute a conjecture made by Kontsevich and Zorich \citep{K}. We fix a tuple of non-negative integers $d=(p_1, p_2,\dots, p_k)$ and consider the vector space of  holomorphic one-forms of a Riemann surface with genus $g$ bigger or equal to 2. We denote by~$M_d$  the moduli space of  these one-forms having zeros $x_1, x_2,\dots ,x_k$ with multiplicity $p_1, p_2,\dots, p_k$, respectively. The conjecture says that each connected component of $M_d$ has homotopy type $K(G, 1)$,  where~$G$ is a group commensurable to some mapping class group. Hamenst\"adt uses the results in \citep{lomo}, to show that there are components in genus~3
that are classifying spaces for the quotients of the Artin groups
$A[E_6]$ and $A[E_7]$ by their centers. She proves that the
only mapping class group which could be commensurable to $A[E_6]/Z(A[E_6])$ is the quotient of the braid group on 7 strands by its center, that is, $A[A_6]/Z(A[A_6])$. By \autoref{tecnico}, the non-commensurability of $A[E_6]/Z(A[E_6])$ and $A[A_6]/Z(A[A_6])$ is equivalent to the non-commensurability of $A[E_6]$ and $A[A_6]$. These
components provide a counterexample to the conjecture. 

\bigskip

\noindent
\Large{\bf Acknowledgements.}

\normalsize The first author was financed by a postdoctoral fellowship of the University of Burgundy and supported by the research grants MTM2016-76453-C2-1-P (financed by the Spanish Ministry of Economy and FEDER) and US-1263032 (financed by the Andalusian Ministry of Economy and Knowledge and the Operational Program FEDER 2014--2020). The second author was supported by the French project ``AlMaRe" (ANR-19-CE40-0001-01) of the ANR. Both authors also thank the two referees for their carreful reading of the manuscript and for their interesting comments and
suggestions.

\section{Statements}
\noindent
Recall that our aim is to partially classify the Artin groups of spherical type up to commensurability. Our starting point is the following result which can be easily proven. It allows to reduce the question to the case where both Coxeter graphs have the same number of vertices.

\begin{proposition}
Let~$\Gamma$ and~$\Omega$ be two Coxeter graphs of spherical type.
If~$A[\Gamma]$ and~$A[\Omega]$ are commensurable, then~$\Gamma$ and~$\Omega$ have the same number of vertices.
\end{proposition}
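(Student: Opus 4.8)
The plan is to extract a commensurability invariant from each Artin group that detects the number of vertices, and the natural candidate is the cohomological dimension. For a Coxeter graph $\Gamma$ of spherical type with $n$ vertices, the associated Artin group $A[\Gamma]$ is known (by Deligne, and Charney--Davis) to be a group of type $FL$ whose classifying space is a finite $CW$-complex, in fact $A[\Gamma]$ has a finite-dimensional $K(\pi,1)$ of dimension exactly $n$; equivalently $\mathrm{cd}(A[\Gamma]) = n$. I would first recall this fact, citing the Deligne/Charney--Davis solution of the $K(\pi,1)$ conjecture for spherical-type Artin groups (or, alternatively, use the Salvetti complex / Charney--Davis model), so that $\mathrm{cd}(A[\Gamma])$ equals the rank $n$.

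Next I would invoke the standard fact that cohomological dimension is a commensurability invariant for groups of finite cohomological dimension. Concretely, if $H$ is a finite-index subgroup of $G$ and $G$ has finite cohomological dimension, then by Serre's theorem $\mathrm{cd}(H) = \mathrm{cd}(G)$ (the hard direction, $\mathrm{cd}(H)\ge \mathrm{cd}(G)$ when $G$ is torsion-free, is Serre's theorem; spherical-type Artin groups are torsion-free since they embed in their braid-like normal form, or because $W[\Gamma]$ finite and $A[\Gamma]$ torsion-free is classical). Hence if $A[\Gamma]$ and $A[\Omega]$ are commensurable, there are finite-index subgroups $H_1 \le A[\Gamma]$, $H_2 \le A[\Omega]$ with $H_1 \cong H_2$, and therefore
\[
\#V(\Gamma) = \mathrm{cd}(A[\Gamma]) = \mathrm{cd}(H_1) = \mathrm{cd}(H_2) = \mathrm{cd}(A[\Omega]) = \#V(\Omega)\,,
\]
which is exactly the claim.

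An alternative route, avoiding torsion-freeness subtleties, is to use the rational cohomological dimension or simply the top nonvanishing rational homology: the Salvetti complex gives $H^n(A[\Gamma];\Q)\neq 0$ and $H^k(A[\Gamma];\Q)=0$ for $k>n$, and for a finite-index subgroup $H$ one has $H_*(H;\Q)$ finite-dimensional with the same top degree (via the transfer map, $H^n(G;\Q)$ injects into $H^n(H;\Q)$, and $H^{>n}(H;\Q)=0$ since $H$ also acts freely on the same $n$-dimensional contractible complex). Either way the top degree of nonvanishing rational cohomology is a commensurability invariant equal to $n$.

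The main obstacle is essentially bookkeeping rather than depth: one must be careful to cite the correct finiteness statement for spherical-type Artin groups (that they admit a finite $K(\pi,1)$ of dimension equal to the rank, equivalently are of type $F$ with $\mathrm{cd} = n$), and, if one wants the integral cohomological dimension argument, one must record that spherical-type Artin groups are torsion-free so that Serre's theorem applies to the finite-index subgroups. Given those inputs, the proof is immediate, which is consistent with the paper's description of this proposition as ``easily proven.''
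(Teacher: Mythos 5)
Your proof is correct and follows essentially the same route as the paper: both use that $\mathrm{cd}(A[\Gamma])$ equals the number of vertices (the paper cites Paris for this, you cite Deligne/Charney--Davis) and that cohomological dimension is unchanged under passage to finite-index subgroups (Serre's theorem, applicable since these Artin groups are torsion-free). Your alternative via rational cohomology is an extra safety net but not needed; the main argument matches the paper.
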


\begin{proof}
Suppose that~$A[\Gamma]$ and~$A[\Omega]$ are commensurable.
Let~$n$ be the number of vertices of~$\Gamma$ and let~$m$ be the number of vertices of~$\Omega$. We know that the cohomological dimension of~$A[\Gamma]$ is~$n$ and the cohomological dimension of~$A[\Omega]$ is~$m$ \citep[Proposition 3.1]{Paris1}. As every finite index subgroup of~$A[\Gamma]$ has the same cohomological dimension as~$A[\Gamma]$ and every finite index subgroup of~$A[\Omega]$ has the same cohomological dimension as~$A[\Omega]$, we have $n =m$.
\end{proof}

\noindent
In \myref{chapter4}{Section} we will prove the following result, which allows to reduce our problem to the study of two connected Coxeter graphs having the same number of vertices.

\begin{theorem}\label{reduction}
Let~$\Gamma$ and~$\Omega$ be two Coxeter graphs of spherical type. 
Let $\Gamma_1, \dots, \Gamma_p$ be the connected components of~$\Gamma$ and $\Omega_1, \dots, \Omega_q$ be the connected components of~$\Omega$.
Then~$A[\Gamma]$ and~$A[\Omega]$ are commensurable if and only if $p=q$ and~$A[\Gamma_i]$ and~$A[\Omega_i]$ are commensurable for every $i \in \{1, \dots, p\}$, up to permutation of the indices.
\end{theorem}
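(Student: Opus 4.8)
The plan is to prove the two directions separately, with the easy direction first. Suppose $p=q$ and, after reindexing, $A[\Gamma_i]$ and $A[\Omega_i]$ are commensurable for every $i$. Choose finite index subgroups $U_i \le A[\Gamma_i]$ and $V_i \le A[\Omega_i]$ with an isomorphism $f_i : U_i \to V_i$. Since $A[\Gamma] = A[\Gamma_1] \times \cdots \times A[\Gamma_p]$ and $A[\Omega] = A[\Omega_1] \times \cdots \times A[\Omega_p]$, the product $U_1 \times \cdots \times U_p$ is a finite index subgroup of $A[\Gamma]$, the product $V_1 \times \cdots \times V_p$ is a finite index subgroup of $A[\Omega]$, and $f_1 \times \cdots \times f_p$ is an isomorphism between them. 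Hence $A[\Gamma]$ and $A[\Omega]$ are commensurable.

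The substantial direction is the converse. Assume $A[\Gamma]$ and $A[\Omega]$ are commensurable. First, by the Proposition just proven, $\Gamma$ and $\Omega$ have the same number of vertices. The strategy is to identify the decomposition $A[\Gamma] = A[\Gamma_1] \times \cdots \times A[\Gamma_p]$ intrinsically, i.e. in a way invariant under passing to finite index subgroups and under isomorphism. The natural tool is the center: since each $A[\Gamma_i]$ is an irreducible Artin group of spherical type, its center $Z(A[\Gamma_i])$ is infinite cyclic, so $Z(A[\Gamma]) \cong \Z^p$ and $p$ is the torsion-free rank of $Z(A[\Gamma])$. A finite index subgroup $H \le A[\Gamma]$ has center containing $H \cap Z(A[\Gamma])$, which has finite index in $Z(A[\Gamma]) \cong \Z^p$, so the rank of $Z(H)$ is at least $p$; one must also check it is not larger, using that $H$ has finite index and that a finite index subgroup of a centerless group is centerless (applied to $A[\Gamma]/Z(A[\Gamma])$, which decomposes as a product of centerless irreducible factors). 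This shows $p$ is a commensurability invariant, giving $p=q$. Next, inside a common finite index subgroup $H$ (embedded in both $A[\Gamma]$ and $A[\Omega]$), one pushes the product decomposition down to $H$: replacing each $A[\Gamma_i]$ by $H_i := H \cap (A[\Gamma_i] \times Z \text{-complement})$ — more precisely, intersecting $H$ with the factors — one obtains $H$ commensurable with a product $\prod H_i$ where $H_i$ is finite index in $A[\Gamma_i]$, and similarly on the $\Omega$ side. The key point is then to match up the factors: one uses that the factors $H_i$ are precisely the components in a direct product decomposition of $H$ that are recognizable up to commensurability, and invokes a uniqueness-of-direct-product-decomposition statement for these groups.

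The main obstacle is exactly this last step: proving that the direct factors are canonical up to commensurability, so that an abstract isomorphism between finite index subgroups of $A[\Gamma]$ and of $A[\Omega]$ must carry (a finite index subgroup of) each $A[\Gamma_i]$ into (a finite index subgroup of) the corresponding $A[\Omega_i]$. Direct product decompositions are not unique in general, so one needs the specific structure here: the groups $A[\Gamma_i]/Z(A[\Gamma_i])$ are irreducible (directly indecomposable), and one can appeal to a Krull–Schmidt-type or centralizer-based argument — for instance, analyzing centralizers of elements of $Z(H)$ — to recover the factors. I would expect the write-up to reduce to the centerless quotients $A[\Gamma]/Z(A[\Gamma]) = \prod A[\Gamma_i]/Z(A[\Gamma_i])$, where each factor has trivial center and is directly indecomposable, apply a uniqueness result for such decompositions to the (finite index, hence again a product of finite index pieces of the factors) common subgroup, and then lift the matching back up through the central extensions, using \autoref{tecnico}-type equivalences to pass between $A[\Gamma_i]$ and $A[\Gamma_i]/Z(A[\Gamma_i])$. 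Once the factors are matched, commensurability of $A[\Gamma_i]$ with $A[\Omega_i]$ for each $i$ follows from the restricted isomorphism.
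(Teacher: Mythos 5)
Your overall strategy is in the same spirit as the paper's: reduce modulo the center, identify the irreducible factors as the pieces of a canonical direct product decomposition, and invoke a uniqueness-of-decomposition statement to match them up. The easy direction is fine. But the proposal has two genuine gaps, and they are exactly where the paper spends almost all of its effort.

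First, you assert that ``a finite index subgroup of a centerless group is centerless.'' This is false in general: the infinite dihedral group $D_\infty$ is centerless, yet its index-two subgroup $\Z$ has nontrivial center. For the groups at hand the statement \emph{is} true, but this is a theorem requiring proof specific to Artin groups: the paper proves it as \autoref{finiteindexofCA}, which in turn relies on the injectivity of $\overline{A[\Gamma]} \to \Com(\overline{A[\Gamma]})$ established in \autoref{tecnico}\,(4), ultimately resting on \citep[Corollary 5.3]{Paris1}. Presenting this as a general group-theoretic fact would make the write-up wrong as stated.

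Second, and more seriously, the heart of the matter is the uniqueness of the direct product decomposition, which you ``invoke'' without proving or citing a concrete applicable result. Direct product decompositions are not unique in general, even for torsion-free, finitely generated groups, and there is no off-the-shelf Krull--Schmidt theorem that applies here. What the paper actually establishes is the notion of a \emph{strong Remak decomposition}: each factor must be \emph{strongly} indecomposable (every finite index subgroup is directly indecomposable), and the decomposition is unique up to commensurability of the factors. Proving that the factors $\overline{\CA[\Gamma_i]}$ are strongly indecomposable (\autoref{decomposition}\,(1), using \citep[Theorem~5B]{Mar}) and that the strong Remak decomposition is unique up to equivalence (\autoref{decomposition}\,(2), proved via a chain of six centralizer-based claims plus an explicit factor-straightening homomorphism $\varphi$) is the bulk of the technical work. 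The paper also works with the pure Artin group $\CA[\Gamma]$ rather than $A[\Gamma]/Z(A[\Gamma])$, which keeps all $p$ factors visible (single-vertex components contribute $\Z$-factors that would vanish in your central quotient), avoiding a bookkeeping headache in recovering $p$. Without some substitute for \autoref{decomposition}, your proof cannot be completed as sketched.
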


\noindent
Let~$G$ be a group.
A subgroup $H$ of~$G$ is a \emph{direct factor} of~$G$ is there is a subgroup~$K$ of~$G$ such that $G = H \times K$.
We say that~$G$ is \emph{indecomposable} if~$G$ does not have any non-trivial proper direct factor.
We say that~$G$ is \emph{strongly indecomposable} if~$G$ is infinite and every finite index subgroup~$H$ of~$G$ is indecomposable.
A \emph{strong Remak decomposition} of~$G$ is a finite index subgroup~$H$ of~$G$ with a direct product decomposition $H = H_1 \times \cdots \times H_p$ such that~$H_i$ is strongly indecomposable for every $i \in \{1, \dots, p\}$.
Two strong Remak decompositions of~$G$, $H = H_1 \times \cdots \times H_p$ and $H' = H_1' \times \cdots \times H_q'$, are said to be \emph{equivalent} if $p = q$ and~$H_i$ and~$H_i'$ are commensurable for every $i \in \{1, \dots, p\}$, up to permutation of the indices.

\bigskip\noindent
The center of a group~$G$ will be denoted by~$Z (G)$.
If~$\Gamma$ is a connected Coxeter graph of spherical type then, thanks to \citep{BrieskornSaito} and \citep{Deligne}, the center of~$A[\Gamma]$ is a cyclic infinite group. 
The quotient $A[\Gamma]/Z(A[\Gamma])$ will be denoted by~$\overline{A[\Gamma]}$ and it will play an important role in our study.
Moreover, we denote by $\theta : A [\Gamma] \to W [\Gamma]$ the canonical projection and by~$\CA [\Gamma]$ the kernel of~$\theta$.
As before, we let $\overline{\CA [\Gamma]} = \CA[\Gamma]/Z(\CA [\Gamma])$.
In \myref{chapter2}{Section}, we will prove that $Z(\CA [\Gamma]) \simeq \Z$ and $\CA [\Gamma] \simeq \overline{\CA[\Gamma]} \times Z (\CA [\Gamma])$ (see \autoref{tecnico}).
If~$\Gamma$ is reduced to a single vertex, then $\CA [\Gamma] = Z(\CA [\Gamma]) \simeq \Z$ and $\overline{\CA [\Gamma]} = \{1\}$.
Otherwise $\overline{\CA [\Gamma]} \neq \{1\}$.

\bigskip\noindent
The proof of \autoref{reduction} is based on the following result which will be proven in \myref{chapter3}{Section}.

\begin{theorem}\label{decomposition}
\hfill
\begin{itemize}
\item[(1)]
Let~$\Gamma$ be a connected Coxeter graph of spherical type which is not reduced to a single vertex.
Then~$\overline{\CA[\Gamma]}$ is strongly indecomposable. 
\item[(2)]
Let~$\Gamma$ be a Coxeter graph of spherical type and let $\Gamma_1, \dots, \Gamma_p$ be its connected components.
We suppose that each $\Gamma_1, \dots, \Gamma_k$ has at least two vertices and each of $\Gamma_{k+1}, \dots, \Gamma_p$ is reduced to a single vertex.
Then
\[
CA[\Gamma] = \overline{\CA[\Gamma_1]} \times \cdots \times \overline{\CA[\Gamma_k]} \times Z(\CA [\Gamma_1]) \times \cdots \times Z(\CA [\Gamma_p])
\]
is a strong Remak decomposition of~$A [\Gamma]$, and it is unique up to equivalence.
\end{itemize}
\end{theorem}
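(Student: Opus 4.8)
The two statements are closely tied: using \autoref{tecnico}, part~(2) will be a formal consequence of part~(1) together with a Krull--Remak--Schmidt-type uniqueness statement for strong Remak decompositions, so the substance lies in part~(1). To prove part~(1) the plan is to establish that $\overline{\CA[\Gamma]}$ is an infinite, torsion-free, acylindrically hyperbolic group, and then to invoke two standard facts about acylindrically hyperbolic groups: a finite-index subgroup of such a group is again acylindrically hyperbolic, and an acylindrically hyperbolic group admits no direct product decomposition with two infinite factors. Granting this, if $H$ is a finite-index subgroup of $\overline{\CA[\Gamma]}$, then $H$ is torsion-free and acylindrically hyperbolic, so a decomposition $H = H_1 \times H_2$ with $H_1, H_2 \ne \{1\}$ would display two infinite direct factors (a nontrivial torsion-free group is infinite), a contradiction; hence every finite-index subgroup of $\overline{\CA[\Gamma]}$ is indecomposable and, $\overline{\CA[\Gamma]}$ being infinite, it is strongly indecomposable.

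It remains to verify the three properties of $\overline{\CA[\Gamma]}$. For torsion-freeness: $A[\Gamma]$ is torsion-free, as it has finite cohomological dimension (\citep[Proposition~3.1]{Paris1}), hence so is its subgroup $\CA[\Gamma]$; by the splitting $\CA[\Gamma] \simeq \overline{\CA[\Gamma]} \times Z(\CA[\Gamma])$ of \autoref{tecnico}, the group $\overline{\CA[\Gamma]}$ embeds in $\CA[\Gamma]$ and is therefore torsion-free, and since $\Gamma$ has at least two vertices we have $\overline{\CA[\Gamma]} \ne \{1\}$, so $\overline{\CA[\Gamma]}$ is infinite. For acylindrical hyperbolicity, write $q : A[\Gamma] \to \overline{A[\Gamma]}$ for the quotient by the (infinite cyclic) center. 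By the work of Calvez and Wiest, $\overline{A[\Gamma]}$ is acylindrically hyperbolic for every connected Coxeter graph of spherical type with at least two vertices (for the dihedral graphs this is elementary, $\overline{A[\Gamma]}$ being then virtually free and non-elementary). Since $\CA[\Gamma]$ has finite index in $A[\Gamma]$, the image $q(\CA[\Gamma])$ has finite index in $\overline{A[\Gamma]}$, hence is acylindrically hyperbolic. Moreover $q(\CA[\Gamma])$ is isomorphic to $\CA[\Gamma]/(\CA[\Gamma] \cap Z(A[\Gamma]))$; here $\CA[\Gamma] \cap Z(A[\Gamma])$ is nontrivial, because a suitable power of a generator of $Z(A[\Gamma])$ maps to $1$ in the finite group $W[\Gamma]$ and so lies in $\CA[\Gamma]$, and it is central in $\CA[\Gamma]$, so it has finite index in $Z(\CA[\Gamma]) \simeq \Z$. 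Therefore $\overline{\CA[\Gamma]} = \CA[\Gamma]/Z(\CA[\Gamma])$ is the quotient of the acylindrically hyperbolic group $q(\CA[\Gamma])$ by the finite central subgroup $Z(\CA[\Gamma])/(\CA[\Gamma] \cap Z(A[\Gamma]))$, and acylindrical hyperbolicity is preserved under quotients by finite normal subgroups. This establishes part~(1).

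For part~(2), observe that $\CA[\Gamma] = \CA[\Gamma_1] \times \cdots \times \CA[\Gamma_p]$ is a finite-index subgroup of $A[\Gamma] = A[\Gamma_1] \times \cdots \times A[\Gamma_p]$, because $\theta : A[\Gamma] \to W[\Gamma]$ is the direct product of the projections $A[\Gamma_i] \to W[\Gamma_i]$. Applying the splitting of \autoref{tecnico} factorwise and discarding the trivial terms $\overline{\CA[\Gamma_i]} = \{1\}$ for $i > k$ gives
\[
\CA[\Gamma] = \overline{\CA[\Gamma_1]} \times \cdots \times \overline{\CA[\Gamma_k]} \times Z(\CA[\Gamma_1]) \times \cdots \times Z(\CA[\Gamma_p]),
\]
in which each $\overline{\CA[\Gamma_i]}$ with $i \le k$ is strongly indecomposable by part~(1) and each $Z(\CA[\Gamma_i]) \simeq \Z$ is strongly indecomposable; this is therefore a strong Remak decomposition of $A[\Gamma]$. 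Its uniqueness up to equivalence then follows from the general principle that any two strong Remak decompositions of a group are equivalent, a commensurability analogue of the Krull--Remak--Schmidt theorem, which one proves by an exchange argument using that strong indecomposability passes to finite-index subgroups. I expect the main obstacle to be twofold: in part~(1), assembling the input that $\overline{A[\Gamma]}$ is acylindrically hyperbolic with the bookkeeping between the centers of $\CA[\Gamma]$, $A[\Gamma]$ and their quotients supplied by \autoref{tecnico}; and, in part~(2), the Krull--Remak--Schmidt-type uniqueness, whose exchange argument is the genuinely non-routine ingredient.
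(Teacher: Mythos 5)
Your proof of Part~(1) is correct but takes a genuinely different route from the paper. The paper applies \citep[Theorem~5B]{Mar} directly: for any finite index subgroup $U = U_1 \times U_2$ of $\overline{\CA[\Gamma]}$ one lifts to $\tilde U = U_1 \times (U_2 \times Z(\CA[\Gamma])) \subset A[\Gamma]$ and invokes that theorem to force one factor into $Z(A[\Gamma])$, hence into $Z(\CA[\Gamma])$, hence to be trivial. Your approach instead packages the same conclusion through acylindrical hyperbolicity (via Calvez--Wiest, plus the facts that acylindrical hyperbolicity passes to finite index subgroups and is incompatible with a direct product of two infinite groups, and torsion-freeness from finite cohomological dimension). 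That route is sound, and in fact it is in the spirit of the alternative argument mentioned in the paper's remark following the proof of \autoref{reduction}. One small simplification: by \autoref{tecnico}\,(1) applied to $U = \CA[\Gamma]$, you have $\CA[\Gamma] \cap Z(A[\Gamma]) = Z(\CA[\Gamma])$, so $q(\CA[\Gamma]) = \overline{\CA[\Gamma]}$ directly (this is \autoref{corolariotecnico}); the ``finite central subgroup'' in your last step is trivial and the passage to a quotient by a finite normal subgroup is unnecessary.

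Part~(2) has a genuine gap. You correctly exhibit the decomposition and observe each factor is strongly indecomposable, but you then claim uniqueness follows from ``the general principle that any two strong Remak decompositions of a group are equivalent.'' No such general theorem exists: Krull--Remak--Schmidt requires chain conditions that fail for these infinite groups, and there is no free-standing ``commensurability KRS'' one can cite. The uniqueness is the bulk of the paper's proof of \autoref{decomposition}, and it is not a formal exchange argument resting only on strong indecomposability of the factors. The paper's argument critically uses \autoref{finiteindexofCA} --- that the centralizer in $\overline{\CA[\Gamma_i]}$ of any finite index subgroup is trivial --- to control centers of the $H_j$'s (Claims~3 and~4), to construct the correcting homomorphism $\varphi$ that straightens the factors (Claim~5), to show the projections $f_{\chi(i)}$ behave as expected (Claim~6), and finally to match up the free abelian part. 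A general strongly indecomposable group need not have this trivial-centralizer property (indeed the factors $Z(\CA[\Gamma_i]) \simeq \Z$ do not), so your claimed principle would not even distinguish, say, a $\Z$ factor hiding inside a non-abelian one. You would need either to reproduce something like the paper's exchange argument, explicitly feeding in the centralizer-triviality of the non-abelian factors, or to import a substantive external theorem such as the quasi-isometry rigidity result of \citep[Theorem~B]{KKL} as sketched in the paper's remark. As written, Part~(2) of your proposal reduces to an unsubstantiated assertion.
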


\noindent
A similar result for Coxeter groups is obtained in \citep{Paris5}. In order to finish the classification, we just need to compare the Artin groups associated to connected Coxeter graphs of spherical type with the same number of vertices. In \myref{chapter5}{Section} we prove the following result, which compares every group of this type with the corresponding Artin group of type~$A_n$.

\begin{theorem}\label{classification}
\hfill
\begin{itemize}
\item[(1)]
Let $n \ge 2$.
Then $A[A_n]$ and $A[B_n]$ are commensurable. 
\item[(2)]
Let $n \ge 4$.
Then $A[A_n]$ and $A[D_n]$ are not commensurable. 
\item[(3)]
Let $n \in \{6,7,8\}$.
Then $A[A_n]$ and $A[E_n]$ are not commensurable.
\item[(4)]
$A[A_4]$ and $A[F_4]$ are not commensurable.
\item[(5)]
Let $n \in \{3,4\}$.
Then $A[A_n]$ and $A[H_n]$ are not commensurable.
\item[(6)]
Let $p \ge 5$.
Then $A[A_2]$ and $A[I_2(p)]$ are commensurable.  
\end{itemize}
\end{theorem}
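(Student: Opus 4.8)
The plan is to dispatch the six statements one at a time, splitting them into the two positive results $(1)$ and $(6)$, which I would prove by exhibiting explicit finite-index subgroups, and the four negative results $(2)$--$(5)$, which I would reduce via \autoref{tecnico} to a comparison of central quotients and then obstruct using the known commensurator of the braid group. For $(1)$, I would use the classical fact that $A[B_n]$ is isomorphic to the annular braid group on $n$ strands, which in turn embeds in $A[A_n]$ (the braid group on $n+1$ strands) as the subgroup $\theta^{-1}(\SSS_n)$ of braids whose underlying permutation fixes one distinguished point; this subgroup has index $[\SSS_{n+1}:\SSS_n]=n+1$, so $A[A_n]$ and $A[B_n]$ are commensurable (this is recalled in \autoref{AnBn}, and presents no difficulty).

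For $(6)$ I would go through the central quotient. Since $Z(A[I_2(p)])\simeq\Z$, it is enough to show that $\overline{A[I_2(p)]}$ is a non-elementary virtually free group for every $p\ge 3$: given a finite-index free subgroup $F$ of rank $\ge 2$ in $\overline{A[I_2(p)]}$, its preimage $\widetilde F$ in $A[I_2(p)]$ is a central extension $1\to\Z\to\widetilde F\to F\to 1$, which splits because $H^2(F;\Z)=0$, so $\widetilde F\simeq\Z\times F$; since all groups $\Z\times F_r$ with $r\ge 2$ are commensurable (take a common finite-index subgroup $\Z\times F_N$), it follows that all $A[I_2(p)]$, $p\ge 3$, are mutually commensurable, in particular $A[A_2]=A[I_2(3)]$ is commensurable with $A[I_2(p)]$ for $p\ge 5$. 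To see the virtual freeness, I would compute a presentation of $\overline{A[I_2(p)]}$: with $\Delta=\Pi(a,b,p)$, the center of $A[I_2(p)]$ is $\langle\Delta^2\rangle$ for $p$ odd and $\langle\Delta\rangle=\langle(ab)^{p/2}\rangle$ for $p$ even, and killing it (then applying the Tietze transformation $s=\bar a\bar b$, $t=\bar a$) yields $\overline{A[I_2(p)]}\simeq\Z/2*\Z/p$ when $p$ is odd and $\overline{A[I_2(p)]}\simeq\Z*\Z/(p/2)$ when $p$ is even; both are non-elementary virtually free for $p\ge 3$.

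For $(2)$--$(5)$, \autoref{tecnico} reduces matters to proving that $\overline{A[X_n]}$ and $\overline{A[A_n]}$ are not commensurable, where $X_n$ runs over $D_n$ $(n\ge 4)$, $E_n$ $(n\in\{6,7,8\})$, $F_4$, $H_3$, $H_4$. The key input is the description of the commensurator of the braid group modulo its center \citep{CharneyCrisp, LM}: $\Com(\overline{A[A_n]})$ is (a finite extension of) the mapping class group $\mathrm{Mod}^{\pm}$ of the $(n+2)$-punctured sphere, a group which is self-commensurating and contains $\overline{A[A_n]}$ with finite index. Hence a commensurability $\overline{A[X_n]}\sim\overline{A[A_n]}$ would force $\overline{A[X_n]}$ to be virtually isomorphic to a finite-index subgroup of $\mathrm{Mod}^{\pm}$ of that punctured sphere; equivalently, by \autoref{decomposition}, the strongly indecomposable factor $\overline{\CA[X_n]}$ would have to be commensurable with $\overline{\CA[A_n]}=P_{n+1}/Z$, the pure mapping class group of the $(n+2)$-punctured sphere. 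I would then rule this out type by type, by isolating for each $X_n$ a commensurability invariant of $\overline{A[X_n]}$ (equivalently of $\overline{\CA[X_n]}$) that is incompatible with such a mapping-class-group structure, reading it off from the action of $A[X_n]$ on its Deligne complex and from the resulting behaviour of centralizers, of abelianizations of characteristic finite-index subgroups, and of finite subgroups of the commensurator.

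The hard part is precisely this last case analysis. Unlike $A[A_n]$ and $A[B_n]$, the groups $A[X_n]$ with $X_n\ne A_n,B_n$ possess no known surface-braid or mapping-class-group model, so no uniform numerical invariant is available: cohomological dimension and virtual cohomological dimension are both equal to the rank, hence cannot separate groups of the same rank, and for each of $D_n$, $E_n$, $F_4$, $H_3$, $H_4$ one must locate by hand the appropriate obstruction against the braid-group side. This is also why the remaining six comparisons among $B_n, D_n, E_n, F_4, H_n, I_2(p)$, where no such braid-group anchor is present on either side, are left open.
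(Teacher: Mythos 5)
Parts (1) and (6) are essentially fine. For (1) you follow the same route as the paper (\autoref{AnBn}): identify $A[B_n]$ with the preimage of a Young subgroup of $\SSS_{n+1}$ under the permutation map, which is finite index. For (6) your route is genuinely different from the paper's and it works: you compute $\overline{A[I_2(p)]}$ directly via a Tietze transformation to get $\Z/2 * \Z/p$ (odd $p$) or $\Z * \Z/(p/2)$ (even $p$), deduce it is non-elementary virtually free, then split the central extension using $H^2(F;\Z)=0$. The paper instead goes through the hyperplane complement picture: by the Hopf fibration argument in \autoref{tecnico}\,(2), $\overline{\CA[I_2(p)]}$ is the fundamental group of the complement of $p$ points in $\P^1_\C$, hence the free group $F_{p-1}$, and then compares with $\overline{\CA[A_2]}\simeq F_2$. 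Both are correct; your route avoids the topology at the cost of a case-by-case presentation computation.

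The gap is in (2)--(5). You correctly set the stage: by \autoref{tecnico} a commensurability would give an embedding $\Phi\colon \overline{A[\Gamma]} \hookrightarrow \Com(\overline{A[\Gamma]}) \simeq \Com(\overline{A[A_n]}) \simeq \MM^*(\Sigma_{0,0},\PP_{n+2})$. But then you only gesture at obstructions (``centralizers, abelianizations of characteristic finite-index subgroups, finite subgroups of the commensurator, the Deligne complex'') without producing one, and these are not the ones that actually work. The paper's key mechanism, which is missing from your proposal, is the following: compose $\Phi$ with the finite quotient $\hat\theta\colon \MM^*(\Sigma_{0,0},\PP_{n+2})\to \SSS_{n+2}\times\{\pm 1\}$. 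Its kernel is the pure mapping class group, which by \autoref{PMCA} is exactly $\overline{\CA[A_n]}$, and this group is biorderable because $\CA[A_n]$ is (\citealp{RolZhu1}) and hence has no generalized torsion. So if $\Phi$ were injective, the kernel of $\varphi=\hat\theta\circ\Phi\colon \overline{A[\Gamma]}\to\SSS_{n+2}\times\{\pm 1\}$ would embed in a biorderable group. The contrapositive (\autoref{corolariocontraejemplos}) reduces each non-commensurability claim to the finite-check statement: every homomorphism $\overline{A[\Gamma]}\to\SSS_{n+2}\times\{\pm 1\}$ has generalized torsion in its kernel. The paper then classifies those homomorphisms (by hand for cyclic ones, using \citep{Artin2} and \citep{Lin1} for the $A$-subgraph, and SageMath for the rest) and exhibits explicit quasi-central commutators as generalized torsion elements in each kernel. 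Without the biorderability/generalized-torsion step your reduction stalls; there is no evident way to run the argument on abelianizations or centralizers, and in fact cohomological dimension and most coarse invariants, as you yourself observe, fail to distinguish these groups. You are also missing that the $H_3$ case breaks even this strategy: there is a homomorphism $\overline{A[H_3]}\to\SSS_5$ whose kernel equals $\overline{\CA[H_3]}$ and has no generalized torsion, so the paper handles $H_3$ by a separate argument on the image in $\MM^*(\Sigma_{0,0},\PP_5)$, using the Nielsen--Thurston trichotomy and the virtual cyclicity of pseudo-Anosov centralizers to produce a nontrivial element of $\Ker(\Phi)$ directly.
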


\noindent
The strategy of the proof of this theorem is the following. We use direct proofs to show Part~(1) and Part~(6). Using the fact that the abstract commensurator of $\overline{A[A_n]}$ is known to be a mapping class group of a punctured sphere (see \citealp{CharneyCrisp}), we show that, if $A[\Gamma]$ is commensurable with $A[A_n]$, then there is a homomorphism  $\varphi : \overline{A [\Gamma]} \to \SSS_{n+2} \times \{ \pm 1\}$ whose
kernel has no generalized torsion. Then, in Parts (2) to (5), in order to prove that $A[\Gamma]$ and $A[A_n]$ are not commensurable, we check in each case that the kernel of every homomorphism $\varphi : \overline{A [\Gamma]} \to \SSS_{n+2} \times \{ \pm 1\}$ has generalized torsion.

\medskip\noindent
The description of $\overline{A[D_4]}$ as the pure mapping class group of the three times punctured torus \citep{PL} as been recently used by \citet{Soroko} to apply the same techniques presented in this article to show that $A[D_4]$ is not commensurable with $A[F_4]$ and~$A[H_4]$. For the remaining cases, we have no hint on how to describe the abstract commensurator of one of the two groups, and this is needed in our argument. So, the following cases remain open:

\begin{itemize}
\item
For $n = 6,7,8$, we do not know if~$A[D_n]$ and~$A[E_n]$ are commensurable.  
\item
For $n=4$, we do not know if~$A[F_4]$ and~$A[H_4]$ are commensurable.
\end{itemize}

\section{A technical and useful result}\label{chapter2}

This section is devoted to some technical results (see \autoref{tecnico}) that will be the key to prove the main theorems of the forthcoming sections. These results are also interesting by themselves.   

\medskip\noindent
Let $\Gamma$ be a Coxeter graph of spherical type.
The \emph{Artin monoid} associated to~$\Gamma$ is the monoid~$A[\Gamma]^+$ having the same presentation as~$A[\Gamma]$, that is,
\[
A[\Gamma]^+ = \langle S \mid \Pi (s,t,m_{s,t}) = \Pi (t,s, m_{s,t}) \text{ for } s,t \in S,\ s \neq t,\ m_{s,t} \neq \infty \rangle^+\,.
\]
By \citep{BrieskornSaito} (see also \citep{Paris3}), $A[\Gamma]^+$ naturally injects in~$A[\Gamma]$.
We define a partial order~$\le_L$ on~$A[\Gamma]$ by $\alpha \le_L \beta$ if $\alpha^{-1} \beta \in A[\Gamma]^+$.
Also by \citep{BrieskornSaito}, the ordered set~$(A[\Gamma], \le_L)$ is a lattice.
We denote by~$\wedge_L$ and $\vee_L$ the lattice operations in $(A[\Gamma], \le_L)$.
In this case, the \emph{Garside element} of~$A[\Gamma]$ is defined as~$\Delta = \vee_LS$.
Again by \citep{BrieskornSaito} and \citep{Deligne} we know that, if~$\Gamma$ is connected, then the center of~$A[\Gamma]$ is infinite and cyclic, and it is generated by an element~$\delta$  of the form $\delta = \Delta^\kappa$, where $\kappa \in \{1,2\}$.
This element~$\delta$ will be called the  \emph{standard generator}  of~$Z(A[\Gamma])$.
We can also express~$\delta$ as follows.
Let $S=\{s_1, \dots, s_n\}$.
Then,  by \citep{BrieskornSaito}, $\delta = (s_1 s_2 \cdots s_n)^{\frac{h}{2}}$ if $\kappa=1$ and $\delta = (s_1 s_2 \cdots s_n)^{h}$ if $\kappa=2$, where $h$ is the Coxeter number of~$\Gamma$, that is, the order of $s_1 s_2 \cdots s_n$ in the associated Coxeter group.
These equalities do not depend on the choice when numbering the elements of~$S$.

\bigskip\noindent
Let~$\Gamma$ be a connected Coxeter graph of spherical type.
Let $z : A[\Gamma] \to \Z$ be the homomorphism such that $z(s)=1$ for every $s \in S$.
Hence, considering the later expression of~$\delta$, we have that $z (\delta) >0$.
The quotient $A[\Gamma]/Z(A[\Gamma])$ is denoted by $\overline{A[\Gamma]}$.
Moreover, recall that $\theta : A [\Gamma] \to W [\Gamma]$ is the canonical projection, $\CA [\Gamma]$ is the kernel of~$\theta$, and $\overline{\CA [\Gamma]} = \CA[\Gamma]/Z(\CA [\Gamma])$.

\begin{proposition}\label{tecnico}
Let $\Gamma, \Omega$ be two connected Coxeter graphs of spherical type. 
\begin{itemize}
\item[(1)]
If~$U$ is a finite index subgroup of~$A[\Gamma]$, then $Z (U) = Z(A [\Gamma]) \cap U$.
In particular, $Z (U)$ is an infinite cyclic group.
\item[(2)]
We have $\CA[\Gamma] \simeq \overline{\CA[\Gamma]} \times Z(\CA [\Gamma]) \simeq \overline{\CA [\Gamma]} \times \Z$.
\item[(3)]
$A [\Gamma]$ and $A[\Omega]$ are commensurable if and only if~$\overline{A [\Gamma]}$ and~$\overline{A[\Omega]}$ are commensurable.
\item[(4)]
The group $\overline{A[\Gamma]}$ injects in its commensurator $\Com (\overline{A [\Gamma]})$.
\end{itemize}
\end{proposition}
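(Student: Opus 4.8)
The plan is to prove (1) first and then to deduce (2), (3), (4) from it by essentially formal arguments, so that (1) carries the real content. In (1), the inclusion $Z(A[\Gamma])\cap U\subseteq Z(U)$ is immediate. For the converse I would prove the stronger statement that every $g\in A[\Gamma]$ whose centralizer $C_{A[\Gamma]}(g)$ has finite index is central; applied with $U\subseteq C_{A[\Gamma]}(g)$ when $g\in Z(U)$, this gives $Z(U)=Z(A[\Gamma])\cap U$, which, being a finite-index subgroup of the infinite cyclic group $Z(A[\Gamma])$ \citep{BrieskornSaito,Deligne}, is itself infinite cyclic. To prove that a finite-index centralizer forces centrality, I would note that, since each $s\in S$ has infinite order, a positive power $s^{N}$ lies in $C_{A[\Gamma]}(g)$, so $g$ commutes with $s^{N}$; then, using the known fact that the centralizer of a positive power of a generator coincides with the centralizer of that generator, $g\in C_{A[\Gamma]}(s)$ for every $s\in S$, hence $g\in Z(A[\Gamma])$. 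I expect this to be the main obstacle: it amounts to saying that $\overline{A[\Gamma]}$ has trivial FC-centre, which one could alternatively obtain from the acylindrical hyperbolicity of $\overline{A[\Gamma]}$ (which forbids infinite amenable normal subgroups), or from the linearity of $A[\Gamma]$ together with a Zariski density argument.

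For (2), I would apply (1) to $U=\CA[\Gamma]$, which has finite index since $W[\Gamma]$ is finite, to get $Z(\CA[\Gamma])=Z(A[\Gamma])\cap\CA[\Gamma]$; since $\theta(\Delta)$ is the longest element of $W[\Gamma]$, of order $2$, while $Z(A[\Gamma])=\langle\Delta^{\kappa}\rangle$ with $\kappa\in\{1,2\}$, this intersection equals $\langle\Delta^{2}\rangle\cong\Z$. It then remains to split the central extension $1\to\langle\Delta^{2}\rangle\to\CA[\Gamma]\to\overline{\CA[\Gamma]}\to 1$, for which I would produce a retraction $\rho\colon\CA[\Gamma]\to\langle\Delta^{2}\rangle$. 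Realizing $\CA[\Gamma]$ as the fundamental group of the complement of the reflection arrangement, the linking number with a fixed reflection hyperplane is a homomorphism $\CA[\Gamma]\to\Z$ taking the value $1$ on $\Delta^{2}$ — the class of the loop given by complex scaling, which links every hyperplane exactly once — equivalently, $[\Delta^{2}]$ is primitive in $H_{1}(\CA[\Gamma];\Z)$. Composing with the isomorphism $\Z\to\langle\Delta^{2}\rangle$ yields $\rho$, and then $\CA[\Gamma]=\ker\rho\times\langle\Delta^{2}\rangle$ with $\ker\rho\cong\CA[\Gamma]/\langle\Delta^{2}\rangle=\overline{\CA[\Gamma]}$.

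Parts (3) and (4) are then formal consequences. For (3): by (1) the image of $\CA[\Gamma]$ in $\overline{A[\Gamma]}$ is $\overline{\CA[\Gamma]}$, of finite index. Given an isomorphism $f$ between finite-index subgroups $H\le A[\Gamma]$ and $H'\le A[\Omega]$, part (1) shows $f(Z(H))=Z(H')$ with $Z(H)=Z(A[\Gamma])\cap H$ and $Z(H')=Z(A[\Omega])\cap H'$, so $f$ descends to an isomorphism $H/Z(H)\to H'/Z(H')$ between finite-index subgroups of $\overline{A[\Gamma]}$ and $\overline{A[\Omega]}$; conversely, if $\overline{A[\Gamma]}$ and $\overline{A[\Omega]}$ are commensurable then so are $\overline{\CA[\Gamma]}$ and $\overline{\CA[\Omega]}$, hence $\overline{\CA[\Gamma]}\times\Z$ and $\overline{\CA[\Omega]}\times\Z$, which by (2) are $\CA[\Gamma]$ and $\CA[\Omega]$, of finite index in $A[\Gamma]$ and $A[\Omega]$. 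For (4): the natural homomorphism $\iota\colon\overline{A[\Gamma]}\to\Com(\overline{A[\Gamma]})$ sending $g$ to the class of conjugation by $g$ has kernel $\{\,g : C_{\overline{A[\Gamma]}}(g)\text{ has finite index}\,\}$. If $g$ lies in this kernel, lift it to $\tilde g\in A[\Gamma]$ and let $P$ be the finite-index preimage of $C_{\overline{A[\Gamma]}}(g)$; for $x\in P$ the commutator $[x,\tilde g]$ lies in $\ker(A[\Gamma]\to\overline{A[\Gamma]})=Z(A[\Gamma])$, and, being central, it makes $x\mapsto[x,\tilde g]$ a homomorphism $P\to Z(A[\Gamma])$ whose image lies in $Z(A[\Gamma])\cap[A[\Gamma],A[\Gamma]]\subseteq Z(A[\Gamma])\cap\ker z$, which is trivial since $z(\delta)>0$. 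Hence $P\subseteq C_{A[\Gamma]}(\tilde g)$, so $C_{A[\Gamma]}(\tilde g)$ has finite index and, by (1), $\tilde g\in Z(A[\Gamma])$, i.e.\ $g=1$; thus $\iota$ is injective.
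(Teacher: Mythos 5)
Your proof is correct, and parts (1), (3) and (4) follow essentially the same route as the paper: the key input for (1) and (4) is exactly the fact (from \citealp{Paris2}, Corollary~5.3) that an element commuting with $s^k$ already commutes with $s$, applied to powers of generators that land in the given finite--index subgroup, together with the length homomorphism $z$ to kill the central correction term; your packaging of (4) via the commutator homomorphism $x\mapsto[x,\tilde g]$ into $Z(A[\Gamma])\cap[A[\Gamma],A[\Gamma]]\subseteq Z(A[\Gamma])\cap\ker z=\{1\}$ is a minor, clean reorganization of what the paper does generator by generator. The genuine divergence is in (2). The paper realizes $\CA[\Gamma]$ as $\pi_1(M)$ of the complexified arrangement complement, invokes \citep[Proposition~5.1]{Orlik} for the topological triviality of the Hopf fibration $M\to\overline M$ to get $\CA[\Gamma]\simeq\pi_1(\overline M)\times\Z$, and then uses Part~(1) to identify the $\Z$ factor with $Z(\CA[\Gamma])$ abstractly, with no need to know which group element generates it. You instead compute $Z(\CA[\Gamma])=\langle\Delta^2\rangle$ directly (correct, since $\theta(\Delta)=w_0$ has order $2$) and split the central extension by producing an explicit retraction $\CA[\Gamma]\to\langle\Delta^2\rangle$ from the linking number with a single reflection hyperplane. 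This buys a more self-contained algebraic argument, but at the price of invoking the additional geometric fact that $\Delta^2$ is the class of the scalar loop $t\mapsto e^{2\pi i t}x_0$, hence has linking number $1$ with every hyperplane; the two arguments are in fact closely related, since the standard proof of Orlik's triviality of the Hopf bundle proceeds by the same choice of a single linear form $\alpha_H$. Your extra remarks in (1) about acylindrical hyperbolicity or Zariski density are alternatives the paper does not pursue and are not needed once the Paris corollary is available.
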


\begin{proof}
{\it Proof of Part (1).}
Let~$U$ be a finite index subgroup of~$A [\Gamma]$.
The inclusion $Z(A [\Gamma]) \cap U \subset Z(U)$ is obvious.
We need to show $Z(U) \subset Z(A [\Gamma]) \cap U$.
Let $\alpha \in Z(U)$ and $s \in S$.
As~$U$ is a finite index subgroup, there is $k \ge 1$ such that $s^k \in U$.
Then  $\alpha s^k \alpha^{-1} = s^k$ and, by  \citep[Corollary 5.3]{Paris2}, $\alpha s \alpha^{-1} = s$.
This proves that~$\alpha$ belongs to~$Z(A [\Gamma])$. 
To see that $Z(U)$ is infinite cyclic, notice that $Z(U)$ is a finite index subgroup of $Z(A [\Gamma])$, which is infinite cyclic because $\Gamma$ is connected.

\bigskip\noindent
{\it Proof of Part (2).}
Let $V = \oplus_{s \in S} \R e_s$ be a real vector space with a basis in one-to-one correspondence with~$S$.
By \citep{Bourbaki}, $W = W[\Gamma]$ has a faithful linear representation $\rho : W \to \GL (V)$ and~$\rho (W)$ is generated by reflections.
We denote by~$\HH$ the set of reflection hyperplanes of~$W$.
We let $V_\C = \C \otimes V$ and $H_\C = \C \otimes H$ for every $H \in \HH$.
Let also
\[
M = V_\C \setminus \left( \bigcup_{H \in \HH} H_\C \right)\,.
\]
Notice that~$M$ is a connected manifold of dimension $2\, |S|$.
By \citep{Brieskorn}, $\pi_1 (M) = \CA [\Gamma]$.

\bigskip\noindent
Let $h : V_\C\setminus \{ 0 \} \to \P V_\C$ be the Hopf fibration.
Let $\overline{M} = h(M)$ and denote by $h_\HH : M \to \overline{M}$ the restriction of~$h$ to~$M$.
Recall that the fiber of~$h_\HH$ is~$\C^*$.
As~$\HH$ is non-empty, we know that~$h_\HH$ is topologically a trivial fibration \citep[Proposition 5.1]{Orlik}.
In other words, $M$~is homeomorphic to $\overline{M} \times \C^*$, hence $\CA [\Gamma] = \pi_1(M) \simeq \pi_1 (\overline{M}) \times \Z$.
From this decomposition it follows that $Z(\CA [\Gamma]) \simeq Z(\pi_1 (\overline{M})) \times \Z$. 
But, thanks to Part~(1), $Z(\CA [\Gamma])$ is isomorphic to~$\Z$, which does not have any non-trivial direct product decomposition, hence $Z(\pi_1 (\overline{M})) =1$, $\pi_1 (\overline{M}) \simeq \CA [\Gamma]/Z(\CA [\Gamma]) = \overline{\CA [\Gamma]}$, and $\CA [\Gamma] \simeq \overline{\CA [\Gamma]} \times \Z = \overline{\CA [\Gamma]} \times Z(\CA [\Gamma])$.

\bigskip\noindent
{\it Proof of Part~(3).}
Suppose that~$A [\Gamma]$ and~$A [\Omega]$ are commensurable.
There is a finite index subgroup~$U$ of~$A [\Gamma]$ and a finite index subgroup~$V$ of~$A [\Omega]$ such that~$U$ is isomorphic to~$V$.
Let $\pi : A [\Gamma] \to \overline{A[\Gamma]}$ and $\pi': A [\Omega] \to \overline{A [\Omega]}$ be the corresponding canonical projections.
Then $\pi(U) = U/(Z(A [\Gamma]) \cap U)$ is a finite index subgroup of~$\overline{A [\Gamma]}$, $\pi' (V) = V/(Z(A [\Omega]) \cap V)$ is a finite index subgroup of~$\overline{A [\Omega]}$, and, by Part~(1), we have $\pi(U) = U/Z(U)$ and $\pi'(V) = V / Z(V)$. Hence $\pi(U)$ is isomorphic to~$\pi'(V)$. Therefore, $\overline{A [\Gamma]}$ and $\overline{A [\Omega]}$ are commensurable.

\bigskip\noindent
Suppose that~$\overline{A [\Gamma]}$ and~$\overline{A[\Omega]}$ are commensurable.
By Part~(1), $Z(\CA [\Gamma]) = \CA [\Gamma] \cap Z(A [\Gamma])$, then $\pi (\CA [\Gamma]) = \overline{\CA [\Gamma]}$ and~$\overline{\CA [\Gamma]}$ is a finite index subgroup of~$\overline{A [\Gamma]}$.
Likewise, $\overline{\CA [\Omega]}$ is a finite index subgroup of~$\overline{A [\Omega]}$, then~$\overline{\CA [\Gamma]}$ and~$\overline{\CA [\Omega]}$ are commensurable. This means that there are finite index subgroups~$\bar U$ of~$\overline{\CA [\Gamma]}$ and~$\bar V$ of~$\overline{\CA [\Omega]}$ such that~$\bar U$ and~$\bar V$ are isomorphic.
By Part~(2), $\CA [\Gamma] = \overline{\CA [\Gamma]} \times \Z$ and~$\CA [\Omega] = \overline{\CA[\Omega]} \times \Z$.
Let $U = \bar U \times \Z \subset \CA [\Gamma]$ and $V = \bar V \times \Z \subset \CA [\Omega]$.
Hence~$U$ is a finite index subgroup of~$\CA [\Gamma]$, $V$ is a finite index subgroup of~$\CA [\Omega]$, and~$U$ and~$V$ are isomorphic.
Thus, $\CA [\Gamma]$ and~$\CA [\Omega]$ are commensurable, so $A [\Gamma]$ and~$A [\Omega]$ are commensurable.

\bigskip\noindent
{\it Proof of Part~(4).}
If~$G$ is a group and $\alpha \in G$ we denote by $c_\alpha : G \to G$, $\beta \mapsto \alpha \beta \alpha^{-1}$, the conjugation by~$\alpha$.
Then we have a homomorphism $\iota_G : G \to \Com (G)$ sending $\alpha$ to the class of~$(G,G, c_\alpha)$.
Let $\iota = \iota_{\overline{A [\Gamma]}} : \overline{A [\Gamma]} \to \Com ( \overline{A [\Gamma]})$, and let $\alpha \in A [\Gamma]$ be such that $\pi (\alpha) \in \Ker(\iota)$, where $\pi : A [\Gamma] \to \overline{A [\Gamma]}$ is the corresponding canonical projection.
There is a finite index subgroup~$\overline{U}$ of~$\overline{A [\Gamma]}$ such that $\pi(\alpha)\, \pi(\beta)\, \pi (\alpha^{-1}) = \pi (\beta)$ for every $\beta \in \pi^{-1} (\bar U)$.
Let $s \in S$.
As~$\bar U$ has finite index in~$\overline{A [\Gamma]}$, there is  $k \ge 1$ such that $\pi(s^k) \in \bar U$.
We have $\pi(\alpha)\, \pi(s^k)\, \pi(\alpha^{-1}) = \pi(s^k)$, so $\pi(\alpha s^k \alpha^{-1} s^{-k})=1$ and then $\alpha s^k \alpha^{-1} s^{-k} \in \Ker(\pi) = Z(A [\Gamma]) = \langle \delta \rangle$. Hence there is  $\ell \in \Z$ such that $\alpha s^k \alpha^{-1} s^{-k} = \delta^\ell$.
Recall that $z : A[\Gamma] \to \Z$ is the homomorphism sending every element of $S$ to~$1$ and $z (\delta) >0$.
Then $0 = z(\alpha s^k \alpha^{-1} s^{-k}) = z(\delta^\ell) = \ell\, z(\delta)$ and $z(\delta)>0$, having that $\ell=0$ and $\alpha s^k \alpha^{-1} = s^k$.
By \citep[Corollary 5.3]{Paris1} it follows that $\alpha s \alpha^{-1} = s$.
This shows that $\alpha$ belongs to~$Z(A [\Gamma])$, so $\pi(\alpha)=1$ and $\iota$ is injective.
\end{proof}

\noindent
The proof of the following corollary is completely and explicitly included in the proof of the proposition above.

\begin{corollary}\label{corolariotecnico}
Let~$\Gamma$ be a connected Coxeter graph of spherical type.
Then $Z(\CA [\Gamma])$ is an infinite cyclic group. On the other hand, $\overline{\CA [\Gamma]}$ can be viewed as a subgroup of $\overline{A [\Gamma]}$, it has finite index in $\overline{A [\Gamma]}$, and its center is trivial. 
\end{corollary}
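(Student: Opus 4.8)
The plan is to extract all three assertions directly from the proof of \autoref{tecnico}, since no genuinely new argument is needed. First I would note that $\CA[\Gamma] = \Ker \theta$ has finite index in $A[\Gamma]$, because $W[\Gamma]$ is finite by the spherical type hypothesis. Applying Part~(1) of \autoref{tecnico} with $U = \CA[\Gamma]$ then gives $Z(\CA[\Gamma]) = Z(A[\Gamma]) \cap \CA[\Gamma]$, a finite index subgroup of the infinite cyclic group $Z(A[\Gamma])$; hence $Z(\CA[\Gamma])$ is itself infinite cyclic.

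Next, to realize $\overline{\CA[\Gamma]}$ as a subgroup of $\overline{A[\Gamma]}$, I would restrict the canonical projection $\pi : A[\Gamma] \to \overline{A[\Gamma]}$ to $\CA[\Gamma]$. Its kernel is $\CA[\Gamma] \cap Z(A[\Gamma])$, which by the previous step is exactly $Z(\CA[\Gamma])$; so $\pi$ induces an injection $\overline{\CA[\Gamma]} = \CA[\Gamma]/Z(\CA[\Gamma]) \hookrightarrow \overline{A[\Gamma]}$ with image $\pi(\CA[\Gamma])$. Since $\CA[\Gamma]$ has finite index in $A[\Gamma]$, this image has finite index in $\overline{A[\Gamma]}$, which gives the second claim.

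For the triviality of the center I would reuse the topological computation in the proof of Part~(2) of \autoref{tecnico}. Writing $M$ for the hyperplane complement and $\overline{M}$ for its image under the Hopf fibration, the trivial fibration $M \cong \overline{M} \times \C^*$ yields $\CA[\Gamma] = \pi_1(M) \cong \pi_1(\overline{M}) \times \Z$, hence $Z(\CA[\Gamma]) \cong Z(\pi_1(\overline{M})) \times \Z$. Since we have just shown $Z(\CA[\Gamma]) \cong \Z$, which has no non-trivial direct product decomposition, $Z(\pi_1(\overline{M}))$ must be trivial; identifying $\pi_1(\overline{M}) \cong \overline{\CA[\Gamma]}$ then gives $Z(\overline{\CA[\Gamma]}) = \{1\}$, and this holds whether $\overline{\CA[\Gamma]}$ is taken abstractly or as the subgroup $\pi(\CA[\Gamma])$ of $\overline{A[\Gamma]}$, the two being isomorphic. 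I do not expect any real obstacle here: the entire content is already present, though somewhat scattered, inside the proof of \autoref{tecnico}, so the task is essentially one of assembly.
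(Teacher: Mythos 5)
Your proposal is correct and follows essentially the same route as the paper: the paper's own proof of the corollary is simply the remark that everything is already contained in the proof of \autoref{tecnico}, and you have carried out precisely that assembly — Part~(1) applied to $U=\CA[\Gamma]$ for the infinite cyclic center, the intersection $\CA[\Gamma]\cap Z(A[\Gamma])=Z(\CA[\Gamma])$ for the embedding and finite index (as in the proof of Part~(3)), and the Hopf-fibration decomposition $\CA[\Gamma]\cong\pi_1(\overline M)\times\Z$ from the proof of Part~(2) for the triviality of the center.
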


\section{Strong Remak decomposition}\label{chapter3}

In this section, we denote by~$\Gamma$ a Coxeter graph of spherical type associated to a Coxeter matrix~$M = (m_{s,t})_{s,t \in S}$.
Recall that our aim is to show \autoref{decomposition}.

\bigskip\noindent
Let~$G$ be a group and~$E$ be a subset of~$G$.
Recall that the \emph{normalizer} of~$E$ in~$G$ is $N_G(E) = \{ \alpha \in G \mid \alpha E \alpha^{-1} = E\}$ and the \emph{centralizer} of~$E$ in~$G$ is~$Z_G(E) = \{ \alpha \in G \mid \alpha e \alpha^{-1} = e \text{ for every } e \in E \}$.
If $E = \{e\}$, we just write $Z_G(e) = Z_G(\{e\})$ to refer to the centralizer of~$e$. We also recall that the center of~$G$ is denoted by~$Z(G)$.

\begin{lemma}\label{finiteindexofCA} Suppose that~$\Gamma$ is connected and different from a single vertex. 
Let~$U$ be a finite index subgroup of~$\overline{\CA [\Gamma]}$.
Then $Z (U) = Z_{\overline{\CA [\Gamma]}} (U) = \{ 1 \}$.
\end{lemma}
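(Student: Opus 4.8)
The plan is to deduce the statement almost directly from Part~(4) of \autoref{tecnico}. First observe that, by the very definitions of center and centralizer, $Z(U) = U \cap Z_{\overline{\CA [\Gamma]}}(U)$, so it suffices to prove that the centralizer $Z_{\overline{\CA [\Gamma]}}(U)$ is trivial; the triviality of $Z(U)$ is then automatic.

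So let $\alpha \in Z_{\overline{\CA [\Gamma]}}(U)$. By \autoref{corolariotecnico}, $\overline{\CA [\Gamma]}$ is a finite index subgroup of $\overline{A [\Gamma]}$, hence $U$ is itself a finite index subgroup of $\overline{A [\Gamma]}$, and $\alpha$, regarded as an element of $\overline{A [\Gamma]}$, centralizes it. Unwinding the definition of the commensurator, this means exactly that the triple $(\overline{A [\Gamma]}, \overline{A [\Gamma]}, c_\alpha)$ is equivalent to $(\overline{A [\Gamma]}, \overline{A [\Gamma]}, \id)$, i.e. that $\alpha$ lies in the kernel of the natural map $\iota : \overline{A [\Gamma]} \to \Com (\overline{A [\Gamma]})$. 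By Part~(4) of \autoref{tecnico} this map is injective, so $\alpha = 1$, and therefore $Z_{\overline{\CA [\Gamma]}}(U) = \{1\}$ and a fortiori $Z(U) = \{1\}$. If one prefers to avoid quoting Part~(4) verbatim, one can simply repeat its proof in this setting: lift $\alpha$ to $\tilde\alpha \in \CA [\Gamma] \subseteq A[\Gamma]$, choose for each $s \in S$ an integer $k \ge 1$ with the image of $s^k$ in $U$, deduce $\tilde\alpha s^k \tilde\alpha^{-1} s^{-k} \in Z(A [\Gamma]) = \langle \delta \rangle$, apply the homomorphism $z : A[\Gamma] \to \Z$ (with $z(\delta) > 0$) to force the exponent to be $0$, conclude $\tilde\alpha s^k \tilde\alpha^{-1} = s^k$, and then invoke \citep[Corollary~5.3]{Paris2} to get $\tilde\alpha s \tilde\alpha^{-1} = s$ for every $s \in S$, whence $\tilde\alpha \in Z(A [\Gamma])$ and $\alpha = 1$.

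I do not expect a genuine obstacle here: the only point that needs a little care is the index bookkeeping, namely the passage ``finite index in $\overline{\CA [\Gamma]}$ $\Rightarrow$ finite index in $\overline{A [\Gamma]}$'', which is immediate from \autoref{corolariotecnico}. In effect, this lemma should be viewed as a corollary of \autoref{tecnico}, and the hypotheses that $\Gamma$ be connected and not a single vertex serve only to place us in the relevant non-degenerate situation (if $\Gamma$ is a single vertex then $\overline{\CA[\Gamma]}$ is trivial and the statement is vacuous).
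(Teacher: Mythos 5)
Your proof is correct and follows essentially the same route as the paper's: reduce to showing the centralizer is trivial, observe that $U$ has finite index in $\overline{A[\Gamma]}$ via \autoref{corolariotecnico}, and invoke the injectivity of $\iota : \overline{A[\Gamma]} \to \Com(\overline{A[\Gamma]})$ from Part~(4) of \autoref{tecnico}. If anything, your write-up is cleaner than the paper's, which momentarily writes ``$\alpha\in Z(U)$'' where the argument really requires $\alpha\in Z_{\overline{\CA[\Gamma]}}(U)$; you state the commensurator-kernel step precisely.
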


\begin{proof}
We just need to show that $Z_{\overline{\CA [\Gamma]}} (U) = \{ 1 \}$ because $Z(U) \subset Z_{\overline{\CA [\Gamma]}} (U)$.
This follows directly from the forth statement of \autoref{tecnico}. Indeed, as $U$ has finite index, any $\alpha$ element in $Z(U)$ is sent to the class $(G,G,1)$ via the homomorphism $\iota$ of the afore-mentioned proposition. Hence $\alpha $ sits in the kernel of $\iota$, which is trivial.
\end{proof}

\begin{lemma}\label{centralizers}
Let $G$ be a group, let $G_1, G_2$ be two subgroups of~$G$ such that $G = G_1 \times G_2$, and let~$H$ be a subgroup of~$G$.
Then $Z_G (H) = (Z_G(H) \cap G_1) \times (Z_G(H) \cap G_2)$.
\end{lemma}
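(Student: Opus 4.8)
The plan is to exploit the fact that in a direct product $G = G_1 \times G_2$ conjugation acts coordinatewise, so that the condition of centralising an element of $H$ splits into two independent conditions, one inside $G_1$ and one inside $G_2$. Write every $g \in G$ uniquely as $g = g_1 g_2$ with $g_i \in G_i$ (internal direct product), recalling that elements of $G_1$ commute with elements of $G_2$, and let $p_i : G \to G_i$ be the projection $g \mapsto g_i$. I would first reduce the whole statement to the identity
\[
Z_G(H) = Z_{G_1}(p_1(H)) \times Z_{G_2}(p_2(H))\,.
\]

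To prove this identity, I would start from the elementary computation that for $g = g_1 g_2 \in G$ and $h = h_1 h_2 \in H$ one has $g h g^{-1} = (g_1 h_1 g_1^{-1})(g_2 h_2 g_2^{-1})$, with $g_i h_i g_i^{-1} \in G_i$, so that this is the unique factorisation of $g h g^{-1}$ along $G = G_1 \times G_2$. Consequently $g h g^{-1} = h$ if and only if $g_1 h_1 g_1^{-1} = h_1$ in $G_1$ and $g_2 h_2 g_2^{-1} = h_2$ in $G_2$. Taking the conjunction of these conditions over all $h \in H$ shows that $g \in Z_G(H)$ if and only if $g_1 \in Z_{G_1}(p_1(H))$ and $g_2 \in Z_{G_2}(p_2(H))$, which is exactly the displayed identity.

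Finally I would identify the two factors on the right with $Z_G(H) \cap G_1$ and $Z_G(H) \cap G_2$: an element $g = g_1 g_2$ of $Z_G(H)$ lies in $G_1$ precisely when $g_2 = 1$, and since $1 \in Z_{G_2}(p_2(H))$ holds automatically, the displayed identity gives $Z_G(H) \cap G_1 = Z_{G_1}(p_1(H))$ and, symmetrically, $Z_G(H) \cap G_2 = Z_{G_2}(p_2(H))$; substituting these back yields the claim. There is essentially no obstacle in this argument; the only point worth flagging is that the analogous equality fails for a general subgroup of $G_1 \times G_2$ (a diagonal subgroup being the standard counterexample), so the proof genuinely relies on $Z_G(H)$ being defined by coordinatewise conditions, and on the harmless fact that the centraliser of $p_i(H)$ is taken inside $G_i$ rather than inside $H$.
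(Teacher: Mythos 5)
Your proof is correct and uses the same essential idea as the paper's: conjugation in $G = G_1 \times G_2$ acts coordinatewise, so the condition of centralising an element of $H$ splits into two independent conditions, one in each factor. The only cosmetic difference is that you route through the intermediate identity $Z_G(H) = Z_{G_1}(p_1(H)) \times Z_{G_2}(p_2(H))$ and then identify those factors with $Z_G(H)\cap G_i$, whereas the paper shows more directly that each component $\alpha_i$ of an element $\alpha \in Z_G(H)$ already centralises all of $H$ (using that $G_1$ and $G_2$ commute); both presentations are fine.
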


\begin{proof}
The inclusion $(Z_G (H) \cap G_1) \times (Z_G (H) \cap G_2) \subset Z_G (H)$ is obvious.
Then we just need to show that $Z_G (H) \subset (Z_G (H) \cap G_1) \times (Z_G (H) \cap G_2)$.
Let $\alpha \in Z_G (H)$ and $\gamma \in H$.
We write $\alpha = (\alpha_1, \alpha_2)$ and $\gamma = (\gamma_1, \gamma_2)$ with $\alpha_1, \gamma_1 \in G_1$ and $\alpha_2, \gamma_2 \in G_2$.
We have $1 = \alpha \gamma \alpha^{-1} \gamma^{-1} = (\alpha_1 \gamma_1 \alpha_1^{-1} \gamma_1^{-1}, \alpha_2 \gamma_2 \alpha_2^{-1} \gamma_2^{-1})$, hence $\alpha_1 \gamma_1 \alpha_1^{-1} \gamma_1^{-1} = 1$.
Moreover, $\alpha_1 \gamma_2 \alpha_1^{-1} \gamma_2^{-1} = 1$, because $\alpha_1 \in G_1$ and $\gamma_2 \in G_2$, so $\alpha_1 \gamma \alpha_1^{-1} \gamma^{-1} = 1$.
Thus, $\alpha_1 \in (Z_G (H) \cap G_1)$.
Analogously, we can prove that $\alpha_2 \in (Z_G (H) \cap G_2)$.
\end{proof}

\bigskip\noindent
{\bf Proof of \autoref{decomposition}.}
We suppose that~$\Gamma$ is connected and different from a single vertex. Let~$U$ be a finite index subgroup of~$\overline{\CA [\Gamma]}$.
Let $U_1, U_2$ be two subgroups of~$U$ such that $U = U_1 \times U_2$ and let $\tilde{U}=U \times Z(\CA[\Gamma])$, which is included in $\overline{\CA [\Gamma]}\times Z(\CA[\Gamma])= \CA[\Gamma]$. Let $\tilde{U_1}=U_1$ and $\tilde{U_2}=U_2\times Z(\CA[\Gamma])$, having $\tilde{U}=\tilde{U_1}\times \tilde{U_2}.$ As~$\CA[\Gamma]$ has finite index in~$A[\Gamma]$, $\tilde{U}$ has finite index in~$A[\Gamma]$ and, by applying \citep[Theorem~5B]{Mar}, we know that either $\tilde{U_1}\subset Z(A[\Gamma])$ or $\tilde{U_2}\subset Z(A[\Gamma])$. 
Also by \autoref{tecnico}, we have that $ Z(A[\Gamma])\cap \tilde{U}=Z(\tilde{U})\subset Z(A[\Gamma]) \cap \CA[\Gamma] = Z(\CA [\Gamma])$. Then $\tilde U_1 \subset Z(\CA [\Gamma])$ or $\tilde U_2 \subset Z(\CA[\Gamma])$, so $U_1 = \{1\}$ or $U_2 = \{1\}$.
This shows the first part of the theorem. We still have to prove the second part.

\bigskip\noindent
Let $\Gamma_1, \dots, \Gamma_p$ be the connected components of~$\Gamma$.
We suppose that every $\Gamma_1, \dots, \Gamma_k$ has at least two vertices and that each of $\Gamma_{k+1}, \dots, \Gamma_p$ is reduced to a single vertex. 
We have that $\CA[\Gamma] = \CA[\Gamma_1] \times \cdots \times \CA [\Gamma_p]$.
By \autoref{tecnico}, $\CA [\Gamma_i] = \overline{\CA [\Gamma_i]} \times Z(\CA [\Gamma_i])$ for every $i \in \{1, \dots, k\}$ and $\CA[\Gamma_i] = Z (\CA [\Gamma_i])$ for every $i \in \{k+1, \dots, p\}$, hence
\begin{equation}\label{eq3_1}
\CA [\Gamma] = \overline{\CA [\Gamma_1]} \times \cdots \times \overline{\CA[\Gamma_k]} \times Z(\CA [\Gamma_1]) \times \cdots \times Z(\CA [\Gamma_p])\,.
\end{equation}
Then, $\overline{\CA [\Gamma_i]}$ is strongly indecomposable for every $i \in \{1, \dots, k\}$.
Moreover, $Z(\CA[\Gamma_i])$ is strongly indecomposable, because $Z (\CA [\Gamma_i]) \simeq \Z$, for every $i \in \{1, \dots, p\}$.
Therefore, (\ref{eq3_1}) is a strong Remak decomposition of~$A [\Gamma]$.
Now, we take a strong Remak decomposition of~$A [\Gamma]$ of the form $H = H_1 \times \cdots \times H_m$ and turn to prove that it is equivalent to~(\ref{eq3_1}).

\medskip

\begin{claim}\label{claim1}
We can assume that $H \subset \CA [\Gamma]$.
\end{claim}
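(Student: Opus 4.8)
The plan is to pass from an arbitrary strong Remak decomposition $H = H_1 \times \cdots \times H_m$ of $A[\Gamma]$ to one whose ambient finite index subgroup lies inside $\CA[\Gamma]$, without changing the equivalence class of the decomposition. First I would observe that since $\CA[\Gamma] = \Ker(\theta)$ has finite index in $A[\Gamma]$ (as $W[\Gamma]$ is finite, $\Gamma$ being of spherical type), the intersection $H' := H \cap \CA[\Gamma]$ is a finite index subgroup of $H$, hence also of $A[\Gamma]$. So it suffices to show that intersecting with $\CA[\Gamma]$ turns the given strong Remak decomposition of $H$ into a strong Remak decomposition of $H'$ that is equivalent to the original one.

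The key step is to produce the induced decomposition of $H'$. The natural candidate is $H_i' := H_i \cap \CA[\Gamma]$, and the two things to check are that $H' = H_1' \times \cdots \times H_m'$ and that each $H_i'$ is still strongly indecomposable. For the direct product statement, the subtlety is that in general $(H_1 \times H_2) \cap K$ need not equal $(H_1 \cap K) \times (H_2 \cap K)$; this holds when $K$ is normal, and indeed $\CA[\Gamma] \trianglelefteq A[\Gamma]$, so $H' = H \cap \CA[\Gamma]$ is normal in $H$ and one gets $H' = H_1' \times \cdots \times H_m'$ by a standard argument (project $H'$ to each factor $H_i$; since $H_i$ and $\CA[\Gamma]$ are both normal in $H$, the projection of an element of $H'$ to $H_i$ is a commutator-type expression that lands back in $\CA[\Gamma]$, so it lies in $H_i'$). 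For the strong indecomposability of $H_i'$: $H_i'$ has finite index in $H_i$, and $H_i$ is strongly indecomposable, so every finite index subgroup of $H_i$ — in particular every finite index subgroup of $H_i'$ — is indecomposable; and $H_i'$ is still infinite since $H_i$ is infinite and $H_i'$ has finite index. Hence each $H_i'$ is strongly indecomposable.

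It remains to note that the new decomposition $H' = H_1' \times \cdots \times H_m'$ is equivalent to $H = H_1 \times \cdots \times H_m$ in the sense defined in the excerpt: the number of factors $m$ is unchanged, and $H_i'$ is commensurable with $H_i$ for each $i$ because $H_i'$ has finite index in $H_i$. Since equivalence of strong Remak decompositions is visibly transitive and the statement we are ultimately proving (uniqueness up to equivalence) only concerns the equivalence class, we may replace $H$ by $H' \subset \CA[\Gamma]$ from now on. This justifies the reduction "we can assume that $H \subset \CA[\Gamma]$."

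The main obstacle I anticipate is the direct product claim $H \cap \CA[\Gamma] = \prod_i (H_i \cap \CA[\Gamma])$: one has to be a little careful that it is not automatic for intersections of direct factors with an arbitrary subgroup, and the argument genuinely uses the normality of $\CA[\Gamma]$ in $A[\Gamma]$ (equivalently, that $\CA[\Gamma]$ contains the commutator subgroup of... well, at least that it is normal, so that writing $h = h_1 \cdots h_m \in H \cap \CA[\Gamma]$ and projecting along the direct product keeps each piece inside $\CA[\Gamma]$). Everything else — finite index bookkeeping and strong indecomposability being inherited by finite index subgroups — is routine.
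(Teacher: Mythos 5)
There is a genuine gap, and it sits exactly where you flagged the subtlety. You take $H' := H \cap \CA[\Gamma]$ and then try to show $H' = H_1' \times \cdots \times H_m'$ with $H_i' := H_i \cap \CA[\Gamma]$. That equality is false in general, even when the intersecting subgroup is normal. Concretely, take $H = H_1 \times H_2 = \Z \times \Z$ inside $A = \Z^2$ and let $N = \{(a,b) : a+b \equiv 0 \bmod 2\}$, a normal finite index subgroup; then $H \cap N = N$ has index $2$ in $H$, while $(H_1 \cap N) \times (H_2 \cap N) = 2\Z \times 2\Z$ has index $4$. The ``project and use that projections are commutator-type expressions landing in $\CA[\Gamma]$'' step does not hold up: the projection $H \to H_i$ is just coordinate extraction, and nothing forces the $i$-th coordinate of an element of $\CA[\Gamma]$ to lie in $\CA[\Gamma]$.

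The good news is that the equality you are after is also unnecessary, and this is precisely where the paper's proof diverges from yours. The paper does not set $H' = H \cap \CA[\Gamma]$; it \emph{defines} $H' := H_1' \times \cdots \times H_m'$ with $H_i' := H_i \cap \CA[\Gamma]$. This is automatically an internal direct product inside $H$ (since each $H_i'$ sits in the direct factor $H_i$), it is visibly contained in $\CA[\Gamma]$ (being a product of subgroups of $\CA[\Gamma]$), it has finite index in $H$ (each $H_i'$ has finite index in $H_i$), and each $H_i'$ is strongly indecomposable since it is an infinite finite-index subgroup of the strongly indecomposable $H_i$. Those are exactly the points you already argue correctly for the rest of your proposal, so the repair is just to drop the false identification with $H \cap \CA[\Gamma]$ and work with the smaller, honestly constructed $H'$. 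With that change the rest of your argument (finite index bookkeeping, inheritance of strong indecomposability, and the equivalence of the two decompositions) is fine.
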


\noindent
{\it Proof of \autoref{claim1}.}
Let $H_i' = H_i \cap \CA [\Gamma]$ for every $i \in \{1, \dots, m\}$ and $H'= H_1' \times \cdots \times H_m'$.
Since~$\CA [\Gamma]$ is a finite index subgroup of~$A[\Gamma]$, $H_i'$ has finite index in~$H_i$ for every $ i \in \{1, \dots, m\}$. This means that $H'$ has finite index in~$H$ and therefore $H'$ has finite index in~$A [\Gamma]$.
As~$H_i$ is strongly indecomposable and~$H_i'$ has finite index in~$H_i$, $H_i'$ is strongly indecomposable, for every $i \in \{1, \dots, m\}$.
Then $H' = H_1' \times \cdots \times H_m'$ is a strong Remak decomposition of~$A [\Gamma]$.
By construction, this decomposition is equivalent to $H = H_1 \times \cdots \times H_m$ and $H'$ is included in~$\CA[\Gamma]$.
This finishes the proof of \autoref{claim1}.

\medskip\noindent
Let $\tilde B = Z(\CA [\Gamma_1]) \times \cdots \times Z  (\CA[\Gamma_p]) \simeq \Z^p$ and $B = H \cap \tilde B$. Set $K_i = H \cap \overline{\CA [\Gamma_i]}$ for every $i \in \{1, \dots, k\}$. As~$H$ has finite index in~$\CA [\Gamma]$, $K_i$ has finite index in~$\overline{\CA [\Gamma_i]}$ for every $i \in \{1, \dots, k\}$ and $B$ has finite index in~$\tilde B$.

\medskip

\begin{claim}\label{claim3}
We have that $Z (H) = B$.
\end{claim}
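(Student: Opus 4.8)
The plan is to prove $Z(H)=B$ by showing the two inclusions $B\subseteq Z(H)$ and $Z(H)\subseteq B$ separately. The inclusion $B\subseteq Z(H)$ is purely formal. First I would record that $\tilde B=Z(\CA[\Gamma])$: from $\CA[\Gamma]=\CA[\Gamma_1]\times\cdots\times\CA[\Gamma_p]$ and the fact that the center of a finite direct product is the product of the centers, we get $Z(\CA[\Gamma])=Z(\CA[\Gamma_1])\times\cdots\times Z(\CA[\Gamma_p])=\tilde B$. In particular $\tilde B$ is central in $\CA[\Gamma]$, so every element of $B=H\cap\tilde B$ commutes with the whole of $\CA[\Gamma]$, hence with the whole of $H$; since also $B\subseteq H$, this yields $B\subseteq Z(H)$.

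For the reverse inclusion I would work componentwise along the decomposition $\CA[\Gamma]=\overline{\CA[\Gamma_1]}\times\cdots\times\overline{\CA[\Gamma_k]}\times\tilde B$ of (\ref{eq3_1}). Take $\alpha\in Z(H)$ and write $\alpha=(\alpha_1,\dots,\alpha_k,b)$ with $\alpha_i\in\overline{\CA[\Gamma_i]}$ and $b\in\tilde B$. Since $H\subseteq\CA[\Gamma]$, we have $\alpha\in Z_{\CA[\Gamma]}(H)$, so $\alpha\gamma\alpha^{-1}\gamma^{-1}=1$ for every $\gamma\in H$; comparing components exactly as in the proof of \autoref{centralizers} shows that each $\alpha_i$, viewed as an element of $\CA[\Gamma]$, still lies in $Z_{\CA[\Gamma]}(H)$. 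In particular $\alpha_i$ centralizes the subgroup $K_i=H\cap\overline{\CA[\Gamma_i]}$, that is, $\alpha_i\in Z_{\overline{\CA[\Gamma_i]}}(K_i)$. Now $K_i$ has finite index in $\overline{\CA[\Gamma_i]}$ (as recorded above) and $\Gamma_i$ is connected with at least two vertices, so \autoref{finiteindexofCA} gives $Z_{\overline{\CA[\Gamma_i]}}(K_i)=\{1\}$; hence $\alpha_i=1$ for every $i\in\{1,\dots,k\}$. Therefore $\alpha=b\in\tilde B$, and since $\alpha\in H$ this gives $\alpha\in H\cap\tilde B=B$. (If $k=0$ the statement is immediate, since then $\CA[\Gamma]=\tilde B$ is abelian and $Z(H)=H=H\cap\tilde B=B$.)

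Since all the necessary machinery is already in place, there is no real obstacle here; the only subtlety worth flagging is that $Z_{\CA[\Gamma]}(H)\cap\overline{\CA[\Gamma_i]}$ is not literally the centralizer inside $\overline{\CA[\Gamma_i]}$ of a prescribed subgroup, but it is contained in $Z_{\overline{\CA[\Gamma_i]}}(K_i)$, and that containment suffices because \autoref{finiteindexofCA} kills the centralizer of \emph{any} finite-index subgroup of $\overline{\CA[\Gamma_i]}$. In the end the claim rests entirely on \autoref{finiteindexofCA} — hence on Part~(4) of \autoref{tecnico}, the injection of $\overline{A[\Gamma_i]}$ into its commensurator — together with the elementary splitting of centralizers over direct factors provided by \autoref{centralizers}.
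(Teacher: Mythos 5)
Your proof is correct and follows essentially the same route as the paper: you split $\alpha\in Z(H)$ componentwise via \autoref{centralizers}, kill each $\overline{\CA[\Gamma_i]}$-component using \autoref{finiteindexofCA}, and deduce $Z(H)\subseteq B$, while the inclusion $B\subseteq Z(H)$ follows from $\tilde B=Z(\CA[\Gamma])$ and $H\subseteq\CA[\Gamma]$. The only cosmetic difference is that you make the identity $\tilde B=Z(\CA[\Gamma])$ explicit at the outset (the paper uses it implicitly when invoking $B=Z(\CA[\Gamma])\cap H$), which is a slightly cleaner presentation of the same argument.
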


\noindent
{\it Proof of \autoref{claim3}.}
Let $\alpha\in Z(H)\subset \CA [\Gamma]$. Then, by \autoref{centralizers}, $\alpha$ can be expressed as $\alpha = \alpha_1 \cdots \alpha_k\beta$, where $\alpha_i \in \overline{\CA [\Gamma_i]}\cap Z_{{\CA [\Gamma]}}(H)$ for every $i \in \{1, \dots,k\}$ and $\beta \in \tilde B$. Since $K_i\subset H$, $\alpha_i$ commutes with every element in $K_i$, so $\alpha_i\in Z_{\overline{\CA [\Gamma_i]}}(K_i)$. By \autoref{finiteindexofCA}, $Z_{\overline{\CA [\Gamma_i]}}(K_i)=\{1\}$, hence $\alpha_i=1$.  Therefore, $\alpha = \beta \in \tilde B \cap H = B$. This proves that $Z(H)\subset B$. To see $B\subset Z(H)$, just notice that $B=Z(\CA[\Gamma])\cap H\subset Z(H)$, because $H\subset\CA[\Gamma]$ by \autoref{claim1}.
This finishes the proof of \autoref{claim3}.

\bigskip\noindent
Let $\hat K_i= K_1\times \cdots \times K_{i-1}\times K_{i+1}\times \cdots \times K_k \times B$ and $L_i=(\overline{\CA [\Gamma_i]}\times \tilde B)\cap H$, for every $i \in \{1, \dots, k\}$.

\medskip

\begin{claim}\label{claim2}
Let $i \in \{1, \dots, k\}$.
Then $Z_H (\hat K_i) = L_i$ and $L_i=(L_i\cap H_1)\times \cdots \times (L_i\cap H_m)$.
\end{claim}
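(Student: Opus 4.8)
The plan is to prove the two identities separately, in the order stated. For the first identity $Z_H(\hat K_i) = L_i$, I would argue both inclusions. For the inclusion $L_i \subset Z_H(\hat K_i)$: by definition $L_i = (\overline{\CA[\Gamma_i]} \times \tilde B) \cap H$, so any element of $L_i$ lies in $\overline{\CA[\Gamma_i]} \times \tilde B$, while $\hat K_i = K_1 \times \cdots \times K_{i-1} \times K_{i+1} \times \cdots \times K_k \times B$ lies in the complementary direct factor $\overline{\CA[\Gamma_1]} \times \cdots \times \overline{\CA[\Gamma_{i-1}]} \times \overline{\CA[\Gamma_{i+1}]} \times \cdots \times \overline{\CA[\Gamma_k]}$ times $\tilde B$; the only overlap is in the $\tilde B$ coordinate, which is central in $\CA[\Gamma]$, so elements of these two subgroups commute. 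Hence $L_i \subset Z_{\CA[\Gamma]}(\hat K_i)$, and intersecting with $H$ (and noting $L_i \subset H$) gives $L_i \subset Z_H(\hat K_i)$. For the reverse inclusion, take $\alpha \in Z_H(\hat K_i) \subset \CA[\Gamma]$ and decompose it along $\CA[\Gamma] = \overline{\CA[\Gamma_1]} \times \cdots \times \overline{\CA[\Gamma_k]} \times \tilde B$ as $\alpha = \alpha_1 \cdots \alpha_k \beta$. Applying \autoref{centralizers} repeatedly (or its multi-factor version) to the condition that $\alpha$ centralizes $\hat K_i$, each component $\alpha_j$ for $j \neq i$ must centralize $K_j$; since $K_j$ has finite index in $\overline{\CA[\Gamma_j]}$, \autoref{finiteindexofCA} forces $\alpha_j = 1$ for $j \neq i$. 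Thus $\alpha = \alpha_i \beta \in \overline{\CA[\Gamma_i]} \times \tilde B$, and since $\alpha \in H$ we get $\alpha \in L_i$.

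For the second identity, $L_i = (L_i \cap H_1) \times \cdots \times (L_i \cap H_m)$, the strategy is to apply the first identity together with the general fact that a centralizer of a subgroup splits along a direct product decomposition of the ambient group. Since $L_i = Z_H(\hat K_i)$ and $H = H_1 \times \cdots \times H_m$, I would invoke \autoref{centralizers} (iterated over the $m$ factors) with $G = H$, $G_1 = H_1$, and $G_2 = H_2 \times \cdots \times H_m$, then recurse, obtaining $Z_H(\hat K_i) = (Z_H(\hat K_i) \cap H_1) \times \cdots \times (Z_H(\hat K_i) \cap H_m)$, which is exactly the claimed decomposition once we substitute $L_i = Z_H(\hat K_i)$. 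The one point requiring care here is that $\hat K_i$ is genuinely a subgroup of $H$ (so that \autoref{centralizers} applies): this holds because each $K_j = H \cap \overline{\CA[\Gamma_j]}$ is a subgroup of $H$, $B = H \cap \tilde B$ is a subgroup of $H$, and these pairwise commute inside $\CA[\Gamma]$, so their product is a subgroup.

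The main obstacle I anticipate is bookkeeping rather than conceptual: one must be careful that the ambient decomposition $\CA[\Gamma] = \overline{\CA[\Gamma_1]} \times \cdots \times \overline{\CA[\Gamma_k]} \times \tilde B$ is compatible with how $\hat K_i$ and $L_i$ sit inside it, and that \autoref{centralizers}, stated for a product of two factors, is correctly iterated to a product of $k$ (resp. $m$) factors. A secondary subtlety is ensuring $H \subset \CA[\Gamma]$ is in force — which it is, by \autoref{claim1} — so that the componentwise analysis and the application of \autoref{finiteindexofCA} to the finite-index subgroups $K_j \subset \overline{\CA[\Gamma_j]}$ are legitimate. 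Once these compatibility points are in place, both identities follow formally from the two cited lemmas.
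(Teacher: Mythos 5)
Your proposal is correct and follows essentially the same route as the paper: both arguments decompose the centralizer of $\hat K_i$ along the direct product $\CA[\Gamma] = \overline{\CA[\Gamma_1]} \times \cdots \times \overline{\CA[\Gamma_k]} \times \tilde B$ via \autoref{centralizers}, kill the components in $\overline{\CA[\Gamma_j]}$ for $j \neq i$ using \autoref{finiteindexofCA}, intersect with $H$ to get $L_i$, and then apply \autoref{centralizers} once more with $G = H = H_1 \times \cdots \times H_m$ for the second identity. The only difference is presentational: the paper computes $Z_{\CA[\Gamma]}(\hat K_i) = \overline{\CA[\Gamma_i]} \times \tilde B$ in one pass and then intersects with $H$, whereas you split the first identity into two inclusions, which amounts to the same calculation.
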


\medskip\noindent
{\it Proof of \autoref{claim2}.}
By \autoref{centralizers}, we have that $$Z_{\CA [\Gamma]} (\hat K_i) = (Z_{\CA [\Gamma]}  (\hat K_i) \cap \overline{\CA [\Gamma_1]}) \times \cdots \times (Z_{\CA [\Gamma]} (\hat K_i) \cap \overline{\CA [\Gamma_k]}) \times (Z_{\CA [\Gamma]} (\hat K_i) \cap \tilde B).$$
Let $j \in \{1, \dots, k\}$ be such that $j \neq i$.
Then, by \autoref{finiteindexofCA}, $(Z_{\CA [\Gamma]} (\hat K_i) \cap \overline{\CA [\Gamma_j]}) \subset Z_{\overline{\CA [\Gamma_j]}} (K_j) = \{1\}$.
Moreover, $(Z_{\CA [\Gamma]} (\hat K_i) \cap \overline{\CA [\Gamma_i]}) = \overline{\CA [\Gamma_i]}$ and $(Z_{\CA [\Gamma]} (\hat K_i) \cap \tilde B) = \tilde B$.
Therefore $Z_{\CA [\Gamma]} (\hat K_i) = \overline{\CA [\Gamma_i]} \times \tilde B$ and $Z_H (\hat K_i) = Z_{\CA [\Gamma]} (\hat K_i) \cap H = L_i$.
Finally, by \autoref{centralizers}, $L_i= Z_H (\hat K_i) =(L_i\cap H_1)\times \cdots \times (L_i\cap H_m)$.
This finishes the proof of \autoref{claim2}.

\medskip

\begin{claim}\label{claim4}
Let $i \in \{1, \dots, k\}$. Then $Z(L_i)=B$ and $L_i/B$ is strongly indecomposable. Also, there is $\chi (i) \in \{1, \dots, m\}$ such that $L_i/B = (L_i \cap H_{\chi (i)})/Z(H_{\chi (i)})$ and $L_i \cap H_j = Z(H_j)$ if $j \neq \chi (i)$.
\end{claim}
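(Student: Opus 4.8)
The plan is to extract all of the claimed statements from the Krull–Remak–Schmidt-type machinery that has already been set up, applied now to the group $L_i$ in place of $A[\Gamma]$. First I would establish $Z(L_i) = B$. The inclusion $B \subset Z(L_i)$ is immediate: by \autoref{claim3}, $B = Z(H) \subset H$, and since $B \subset \tilde B \subset \overline{\CA[\Gamma_i]}\times \tilde B$ and $B$ centralizes all of $\CA[\Gamma]$ (as $B \subset Z(\CA[\Gamma])$), it certainly centralizes $L_i \subset \CA[\Gamma]$. Conversely, take $\alpha \in Z(L_i)$. Using \autoref{centralizers} for the decomposition $\CA[\Gamma] = \overline{\CA[\Gamma_i]} \times \bigl(\prod_{j\neq i}\overline{\CA[\Gamma_j]} \times \tilde B\bigr)$ — noting $L_i$ projects onto a finite index subgroup $K_i$ of $\overline{\CA[\Gamma_i]}$ and contains $B$ of finite index in $\tilde B$ — one writes $\alpha = \alpha_i \beta$ with $\alpha_i \in \overline{\CA[\Gamma_i]} \cap Z_{\CA[\Gamma]}(L_i)$ and $\beta \in \tilde B$; since $\alpha_i$ centralizes $K_i$, \autoref{finiteindexofCA} forces $\alpha_i = 1$, so $\alpha = \beta \in \tilde B \cap H = B$ as $\beta \in \tilde B \cap L_i \subset \tilde B \cap H$. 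Hence $Z(L_i) = B$.

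Next I would show $L_i/B$ is strongly indecomposable. Observe that $L_i = \overline{\CA[\Gamma_i]} \times \tilde B) \cap H$ has finite index in $\overline{\CA[\Gamma_i]} \times \tilde B$: indeed $K_i$ has finite index in $\overline{\CA[\Gamma_i]}$ and $B$ has finite index in $\tilde B$, and $K_i \times B \subset L_i$. Since $B = Z(L_i) = Z(\CA[\Gamma]) \cap L_i$ by the above and the argument of \autoref{tecnico}, passing to the quotient gives that $L_i/B$ has finite index in $(\overline{\CA[\Gamma_i]} \times \tilde B)/\tilde B \simeq \overline{\CA[\Gamma_i]}$; more precisely $L_i/B$ is isomorphic to a finite index subgroup of $\overline{\CA[\Gamma_i]}$ (namely one containing $K_i$). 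Since $\overline{\CA[\Gamma_i]}$ is strongly indecomposable by Part~(1) of \autoref{decomposition}, and strong indecomposability passes to finite index subgroups, $L_i/B$ is strongly indecomposable. (In particular it is infinite, since $\Gamma_i$ has at least two vertices.)

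For the last sentence I would invoke the uniqueness in the Krull–Remak–Schmidt theorem, as packaged in \citep{Mar}. By \autoref{claim2}, $L_i = (L_i \cap H_1) \times \cdots \times (L_i \cap H_m)$, and each $L_i \cap H_j$, being a direct factor of $L_i$, is either trivial modulo its contribution or... more carefully: quotienting by $B = Z(L_i)$ gives $L_i/B = \prod_{j=1}^m (L_i \cap H_j)/\bigl((L_i\cap H_j)\cap B\bigr)$, and since $L_i/B$ is strongly indecomposable with trivial center, at most one of these factors can be infinite. The factor $(L_i\cap H_j)/((L_i\cap H_j)\cap B)$ is a quotient of a finite index subgroup of the strongly indecomposable (hence infinite) group $H_j$, so it cannot be finite unless $L_i \cap H_j$ is contained in a finite-index-bounded way inside $B$; making this precise, one gets that exactly one index $\chi(i)$ has $(L_i\cap H_{\chi(i)})$ infinite, with $L_i/B = (L_i\cap H_{\chi(i)})/Z(H_{\chi(i)})$ after identifying $(L_i\cap H_j)\cap B$ with $Z(H_j)$ for all $j$ (this identification itself uses $Z(H_j) = Z(\CA[\Gamma]) \cap H_j$ from \autoref{tecnico} together with $B = H \cap \tilde B$), and $L_i\cap H_j = Z(H_j)$ for $j \neq \chi(i)$. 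The main obstacle I anticipate is precisely this bookkeeping: pinning down that $(L_i\cap H_j)\cap B = Z(H_j)$ and that the "small" factors are not merely finite but exactly equal to the centers $Z(H_j)$, which requires combining the center computations of \autoref{tecnico}, \autoref{claim3}, and the strong indecomposability of the $H_j$ carefully rather than just citing Krull–Remak–Schmidt as a black box.
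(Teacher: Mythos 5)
Your argument follows the paper's route closely, and the first two assertions ($Z(L_i)=B$ and strong indecomposability of $L_i/B$ via the projection onto $\overline{\CA[\Gamma_i]}$) are proved essentially as in the paper. The bookkeeping you flag as a potential gap in the last assertion closes more cleanly than you anticipate: since $H = H_1 \times \cdots \times H_m$ is a direct product, $Z(H) = Z(H_1) \times \cdots \times Z(H_m)$, so \autoref{claim3} gives $B = Z(H_1) \times \cdots \times Z(H_m)$; from $Z(H_j) \subset B \subset L_i$ and $Z(H_j) \subset H_j$ one gets $Z(H_j) \subset L_i \cap H_j$ and $(L_i \cap H_j) \cap B = Z(H_j)$, so quotienting the decomposition of \autoref{claim2} by $B$ yields $L_i/B = (L_i\cap H_1)/Z(H_1) \times \cdots \times (L_i\cap H_m)/Z(H_m)$, and indecomposability of $L_i/B$ forces all but one factor to be trivial (not merely finite). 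Two small cautions: your proposed identification $Z(H_j) = Z(\CA[\Gamma]) \cap H_j$ does not follow from \autoref{tecnico} (which applies to finite index subgroups of $A[\Gamma]$, and $H_j$ generally does not have finite index), and the paper does not invoke Krull--Remak--Schmidt here at all --- only the definition of indecomposability.
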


\noindent
{\it Proof of \autoref{claim4}.}
Since $B\subset Z(\CA[\Gamma])$ and $B\subset L_i \subset H$, we have $B\subset Z(L_i)$. Now, take $\alpha\in Z(L_i)$. As $L_i$ is a subgroup of $\overline{\CA [\Gamma_i]} \times \tilde B$, by \autoref{centralizers} we can write $\alpha$ in the form $\alpha= \alpha_i \beta$, where $\alpha_i\in \overline{\CA [\Gamma_i]} \cap Z_{CA[\Gamma]}(L_i)$ and $\beta\in \tilde B$. Since $K_i\subset L_i$, $\alpha_i$ commutes with every element in $K_i$, so $\alpha_i\in Z_{\overline{\CA [\Gamma_i]}}(K_i)$. By \autoref{finiteindexofCA}, $Z_{\overline{\CA [\Gamma_i]}}(K_i)=\{1\}$, hence $\alpha_i=1$.  Therefore, $\alpha = \beta \in \tilde B \cap H = B$ and then $Z(L_i)\subset B$.

\medskip\noindent
Let $\pi: \overline{\CA [\Gamma_i]} \times \tilde B \rightarrow \overline{\CA [\Gamma_i]}$ be the projection homomorphism and let $\pi'$ be the restriction of $\pi$ to $L_i$. Then $\Ker(\pi')=\Ker(\pi)\cap L_i=\tilde B \cap L_i \subset Z(L_i)=B$. On the other hand, $B\subset \tilde B \cap L_i$, hence $\Ker(\pi')=B$. Using the first isomorphism theorem, we have that $L_i/ B \simeq \pi(L_i)$. As $L_i$ has finite index in $ \overline{\CA [\Gamma_i]}\times \tilde B$, $\pi(L_i)$ has finite index in $\overline{\CA [\Gamma_i]}$, which is strongly indecomposable. This implies that $L_i/B$ is strongly indecomposable.

\medskip\noindent
By \autoref{claim2}, we have that $L_i=(L_i\cap H_1)\times \cdots \times (L_i\cap H_m)$.
If we quotient this equality by $B = Z(H) = Z(H_1) \times \cdots \times Z(H_m)$, we get $L_i/B = (L_i \cap H_1)/Z(H_1) \times \cdots \times (L_i \cap H_m)/Z(H_m)$.
We already know that~$L_i/B$ is strongly indecomposable, so there is $\chi (i) \in \{1, \dots, m\}$ such that $L_i/B = (L_i \cap H_{\chi (i)})/Z(H_{\chi (i)})$ and $(L_i \cap H_j)/ Z(H_j) = \{1\}$ if $j \neq \chi (i)$. This implies that $L_i \cap H_j \subset Z(H_j)$ if $j \neq \chi (i)$. Finally, notice that $Z(H_i)\subset B \subset L_i$.
This proves \autoref{claim4}.

\bigskip\noindent
For $j \in \{1, \dots, m\}$ we denote  by~$f_j : H \to H_j$ the projection of~$H$ on~$H_j$.
Let $K=K_1\times \cdots \times K_k\times B$. For $i \in \{1, \dots, k\}$ we denote by $g_i : K \to K_i$ the projection of~$K$ on~$K_i$, and we denote by $h : K \to B$ the projection of~$K$ on~$B$.
Notice that, since $K_i\times B$ is a subgroup of $L_i$, $K_i$ is a subgroup of $L_i/B$. Then, $K_i$ injects into $L_i$ and into $L_i/B$, which by \autoref{claim4} is isomorphic to $(L_i \cap H_{\chi (i)})/Z(H_{\chi (i)})$.  This means that the composition
$$K_i\hookrightarrow L_i \xrightarrow[]{f_{\chi(i)}} L_i\cap H_{\chi(i)} $$
has to be injective. In other words, the restriction $f_{\chi (i)} |_{K_i} : K_i \to H_{\chi (i)}$ is injective. Also by \autoref{claim4}, if $j \neq \chi (i)$, 
$$K_i\hookrightarrow L_i \xrightarrow[]{f_{j}} L_i\cap H_{j}=Z(H_j) $$
Hence, we have $f_j (K_i) \subset Z(H_j)$.

\medskip\noindent
Let $\psi_i : K_i \to B$ be the map defined by $\psi_i (\alpha) = \prod_{j \neq \chi (i)} f_j (\alpha)^{-1}$.
As~$B$ is an abelian group, $\psi_i$~is a well-defined homomorphism.
Let $\psi : K \to B$ be the map defined by $\psi (\alpha) = \prod_{i=1}^k (\psi_i \circ g_i) (\alpha)$.
Then, again, $\psi$ is a well-defined homomorphism because~$B$ is abelian.
Also, notice that $\psi (\beta) = 1$ for every $\beta \in B$.
If $\varphi : K \to K$ is the map defined by $\varphi (\alpha) = \alpha\, \psi(\alpha)$, 
it is clear that~$\varphi$ is a homomorphism.
In addition, as $\psi (\beta) = 1$ for every $\beta \in B$, $\varphi$ is invertible and $\varphi^{-1}$ is defined by $\varphi^{-1} (\alpha) = \alpha \, \psi(\alpha)^{-1}$.

\medskip

\begin{claim}\label{claim5}
For every $i \in \{1, \dots, k\}$ we have $\varphi (K_i) \subset H_{\chi (i)}$.
\end{claim}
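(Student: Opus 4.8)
Since $H = H_1 \times \cdots \times H_m$, to prove that $\varphi(K_i) \subset H_{\chi(i)}$ it is enough to fix $\alpha \in K_i$ and to check that the coordinate projections $f_l(\varphi(\alpha))$ vanish for all $l \neq \chi(i)$. So the plan is: (a) simplify $\psi$ on $K_i$, and (b) apply each $f_l$ with $l \neq \chi(i)$ and watch the terms collapse.

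For step (a): because $K = K_1 \times \cdots \times K_k \times B$, an element $\alpha \in K_i$ satisfies $g_{i'}(\alpha) = 1$ for $i' \neq i$ and $g_i(\alpha) = \alpha$, so $\psi(\alpha) = \psi_i(\alpha) = \prod_{j \neq \chi(i)} f_j(\alpha)^{-1}$ and hence $\varphi(\alpha) = \alpha \prod_{j \neq \chi(i)} f_j(\alpha)^{-1}$. For step (b): recall (as established just before the definition of $\psi_i$, using \autoref{claim4}) that $f_j(\alpha) \in f_j(K_i) \subset Z(H_j) \subset H_j$ for every $j \neq \chi(i)$; hence, applying the homomorphism $f_l$ for a fixed $l \neq \chi(i)$ and using that $f_l$ is the identity on the direct factor $H_l$ and trivial on the other $H_j$, the only surviving term of $f_l\big(\prod_{j \neq \chi(i)} f_j(\alpha)^{-1}\big)$ is the one with $j = l$, equal to $f_l(\alpha)^{-1}$. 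Thus $f_l(\varphi(\alpha)) = f_l(\alpha)\, f_l(\alpha)^{-1} = 1$, and since $l \neq \chi(i)$ was arbitrary, $\varphi(\alpha) \in H_{\chi(i)}$.

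I do not expect a genuine obstacle here: the whole argument is a bookkeeping exercise with the three families of projections $f_j : H \to H_j$, $g_i : K \to K_i$ and $h : K \to B$. The only points to be careful about are making sure the index $l$ really occurs among the $j \neq \chi(i)$ (it does, precisely because $l \neq \chi(i)$), and using that the correction factors $f_j(\alpha)^{-1}$ lie in pairwise distinct direct factors $H_j$ so that they commute and are annihilated by the other coordinate projections; this in turn relies on $B = Z(H) = Z(H_1) \times \cdots \times Z(H_m)$, which is \autoref{claim3} combined with $H = H_1 \times \cdots \times H_m$.
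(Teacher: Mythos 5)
Your proof is correct and takes essentially the same route as the paper: both reduce to $\psi(\alpha)=\psi_i(\alpha)$ for $\alpha\in K_i$ and then cancel the factors $f_j(\alpha)$ against $f_j(\alpha)^{-1}$ for $j\neq\chi(i)$; the paper does this in one line by writing $\alpha=\prod_{j=1}^m f_j(\alpha)$ and concluding $\varphi(\alpha)=f_{\chi(i)}(\alpha)$, while you verify it coordinate-by-coordinate via $f_l(\varphi(\alpha))=1$, which is the same computation reorganized.
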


\noindent
{\it Proof of \autoref{claim5}.}
Let $i \in \{1, \dots, k\}$ and $\alpha \in K_i$.
For $\ell \in \{1, \dots, k\}$, $ \ell \neq i$, we have that $g_\ell(\alpha) = 1$, then $(\psi_\ell \circ g_\ell)(\alpha) = 1$ and $\psi (\alpha) = \psi_i (\alpha)$.
Moreover, $\alpha = \prod_{j=1}^m f_j (\alpha)$, having $\varphi (\alpha) = f_{\chi (i)} (\alpha) \in H_{\chi (i)}$.
This finishes the proof of \autoref{claim5}.

\bigskip\noindent
Up to applying~$\varphi$ we can assume that $K_i \subset H_{\chi (i)}$ for every $i \in \{1, \dots, k\}$.

\medskip

\begin{claim}\label{claim6} \hfill
\begin{itemize}
\item[(1)]
For every $i \in \{1, \dots, k\}$ we have $f_{\chi(i)} (K) = K_i$ and $f_{\chi (i)} (\hat K_i) = \{1 \}$. Moreover, $K_i$ is a finite index subgroup of~$H_{\chi (i)}$.
\item[(2)]
For every $i,j \in \{1, \dots, k\}$, $i \neq j$, we have $\chi(i) \neq \chi (j)$.
\end{itemize}
\end{claim}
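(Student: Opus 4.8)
The plan is to prove both parts at once by examining, for a fixed $l\in\{1,\dots,m\}$, the image $f_l(K)$ inside the strongly indecomposable factor $H_l$. Recall the state of affairs just before the statement of the Claim: after applying $\varphi$ we have $K=K_1\times\cdots\times K_k\times B$ with $K_i\subseteq H_{\chi(i)}$ for every $i\in\{1,\dots,k\}$; the subgroup $K$, being a finite index subgroup of $\CA[\Gamma]$ that is contained in $H$, has finite index in $H$; and $B=Z(H)=Z(H_1)\times\cdots\times Z(H_m)$ by \autoref{claim3}. We will also use that each $K_i$ is isomorphic to a finite index subgroup of $\overline{\CA[\Gamma_i]}$ (the restriction of the automorphism $\varphi$ of $K$ to the original $K_i=H\cap\overline{\CA[\Gamma_i]}$ is an isomorphism onto the new $K_i$); since $\overline{\CA[\Gamma_i]}$ is strongly indecomposable, \autoref{finiteindexofCA} then gives that each $K_i$ is infinite with trivial center, in particular non-trivial.

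First I would record the elementary fact that a surjective homomorphism carries a finite index subgroup onto a finite index subgroup; applied to the projection $f_l : H\to H_l$ this shows that $f_l(K)$ has finite index in $H_l$. Next I would compute $f_l(K)$ directly. Since $f_l(K_i)=K_i$ whenever $\chi(i)=l$ and $f_l(K_i)=\{1\}$ whenever $\chi(i)\neq l$, and since $f_l(B)=Z(H_l)$, we obtain $f_l(K)=\bigl(\prod_{i\in\chi^{-1}(l)}K_i\bigr)\cdot Z(H_l)$. The crucial point is that this is an \emph{internal} direct product: the $K_i$ appearing are pairwise distinct direct factors of $K$, so they commute and intersect trivially; $Z(H_l)$ is central in $H_l$, hence commutes with everything; and any element of $Z(H_l)\cap\prod_{i\in\chi^{-1}(l)}K_i$ centralizes $\prod_{i\in\chi^{-1}(l)}K_i$ while lying in it, so it lies in $\prod_{i\in\chi^{-1}(l)}Z(K_i)=\{1\}$.

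Thus $f_l(K)=\bigl(\prod_{i\in\chi^{-1}(l)}K_i\bigr)\times Z(H_l)$ is a finite index subgroup of the strongly indecomposable group $H_l$, hence indecomposable. As each $K_i$ is non-trivial, this forces the displayed direct product to have at most one non-trivial factor, so $|\chi^{-1}(l)|\le 1$, and if $\chi^{-1}(l)=\{i\}$ then $Z(H_l)=\{1\}$. The inequality $|\chi^{-1}(l)|\le 1$, valid for every $l$, says precisely that $\chi$ is injective on $\{1,\dots,k\}$, which is Part~(2). For Part~(1), fix $i\in\{1,\dots,k\}$ and set $l=\chi(i)$; then $\chi^{-1}(l)=\{i\}$, so $Z(H_{\chi(i)})=\{1\}$ and $f_{\chi(i)}(K)=K_i$, which is a finite index subgroup of $H_{\chi(i)}$ by the first step. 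Finally, $f_{\chi(i)}(\hat K_i)=\prod_{j\neq i,\,j\le k}f_{\chi(i)}(K_j)\cdot f_{\chi(i)}(B)$; each $f_{\chi(i)}(K_j)$ with $j\neq i$ is trivial since $\chi(j)\neq\chi(i)$ by Part~(2), and $f_{\chi(i)}(B)=Z(H_{\chi(i)})=\{1\}$, so $f_{\chi(i)}(\hat K_i)=\{1\}$.

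The only delicate point is the verification that $f_l(K)$ is a genuine internal direct product of the $K_i$ with $Z(H_l)$ — this is exactly what allows strong indecomposability of $H_l$ to carry out the counting — together with keeping straight that the $K_i$ here are the post-$\varphi$ subgroups, for which non-triviality, triviality of the center, and the decomposition $K=K_1\times\cdots\times K_k\times B$ all survive because $\varphi$ is an automorphism of $K$ that fixes $B$ pointwise.
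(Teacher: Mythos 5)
Your proof is correct and uses the same mechanism as the paper: strong indecomposability of $H_l$, together with the fact that each $K_i$ is non-trivial with $Z(K_i)=\{1\}$, forces the finite-index subgroup $f_l(K)$ of $H_l$ (which you exhibit as an internal direct product) to have at most one non-trivial factor. You merely reorganize the argument---fixing a target $l$ and the full decomposition $K=K_1\times\cdots\times K_k\times B$ so as to obtain Part~(2) before Part~(1), and incidentally $Z(H_{\chi(i)})=\{1\}$---whereas the paper fixes a source $i$, uses the bipartite split $K=K_i\times\hat K_i$ to show $f_{\chi(i)}(\hat K_i)=\{1\}$ directly, derives Part~(1) first, then Part~(2), and extracts $Z(H_{\chi(i)})=\{1\}$ in the paragraph following the claim.
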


\noindent
{\it Proof of \autoref{claim6}.}
As~$K$ has finite index in~$H$, $f_{\chi (i)}(K)$ has finite index in~$H_{\chi (i)}$.
Notice also that $f_{\chi (i)} (K_i) = K_i$ because $K_i \subset H_{\chi (i)}$. 
We have that $K = K_i \times \hat K_i$, so $f_{\chi (i)} (K) = f_{\chi(i)} (K_i)\, f_{\chi (i)} (\hat K_i) = K_i\, f_{\chi (i)} (\hat K_i)$ and $[K_i, f_{\chi (i)} (\hat K_i)] = \{1 \}$.
Moreover, by \autoref{finiteindexofCA}, $K_i \cap f_{\chi (i)} (\hat K_i) \subset Z(K_i) = \{1\}$.
Hence, $f_{\chi (i)} (K) \simeq K_i \times f_{\chi (i)} (\hat K_i)$.
As $H_{\chi (i)}$ is strongly indecomposable, we have that $f_{\chi (i)} (\hat K_i) = \{1\}$ and $f_{\chi (i)} (K) = K_i$.
On the other hand, let $j \in \{1, \dots, k\}$ be such that $j \neq i$.
As~$K_j$ is a subgroup of~$\hat K_i$, we have that $f_{\chi(i)} (K_j) = \{1\} \neq K_j$, and then $\chi(i) \neq \chi(j)$.
This finishes the proof of \autoref{claim6}.

\bigskip\noindent
By the results from above, $m \ge k$ and we can suppose that $\chi (i) = i$ for every $i \in \{1, \dots, k\}$, up to renumbering the $H_i$'s.
Recall that $B = Z(H) = Z(H_1) \times \cdots \times Z(H_m)$ and $Z(H_i) = f_i (B)$ for every $i \in \{1, \dots, m\}$.
If $i \in \{1, \dots, k\}$, then $B \subset \hat K_i$, so by \autoref{claim6}, $\{1\} = f_i (B) = Z(H_i)$.
Hence, $B = Z(H_{k+1}) \times \cdots \times Z(H_m)$.
We also have that $K = K_1 \times \cdots \times K_k \times B$ is a subgroup of $K_1 \times \cdots \times K_k \times H_{k+1} \times \cdots \times H_m$, and that~$K$ is a finite index subgroup of $H = H_1 \times \cdots \times H_k \times H_{k+1} \times \cdots \times H_m$. Therefore~$B$ has finite index in $H_{k+1} \times \cdots \times H_m$.

\bigskip\noindent
For $j \in \{ k+1, \dots, m\}$, we let $B_j = B \cap H_j$.
As~$B$ is a finite index subgroup of $H_{k+1} \times \cdots \times H_m$, $B_j$~has finite index in~$H_j$.
In addition, as~$H_j$ is strongly indecomposable, $B_j$ is indecomposable.
The group~$B_j$ is a subgroup of~$B \simeq \Z^p$ and it is indecomposable, so $B_j \simeq \Z$. 
Let $B' = B_{k+1} \times \cdots \times B_m$.
Then $B'$ is a finite index subgroup of $H_{k+1} \times \cdots \times H_m$ and $B'$ has finite index in~$B$.
As $B' \simeq \Z^{m-k}$ and $B \simeq \Z^p$, it follows that $m-k = p$.

\bigskip\noindent
For $i \in \{1, \dots, k\}$, $K_i$ is a finite index subgroup of both~$H_i$ and $\overline{\CA [\Gamma_i]}$, so~$H_i$ and~$\overline{\CA [\Gamma_i]}$ are commensurable.
Moreover, for every $i \in \{ k+1, \dots, m\}$, $Z (\CA(\Gamma_{i-k})) \simeq \Z \simeq B_i$, and~$B_i$ is a finite index subgroup of~$H_i$. Therefore $Z (\CA [\Gamma_{i-k}])$ and $H_i$ are commensurable. \qed

\section{Reduction to the connected case}\label{chapter4}

\autoref{reduction} is a consequence of \autoref{decomposition} and the following proposition.

\begin{proposition}\label{prop41}
Let $G$ and $G'$ be two infinite groups.
We suppose that~$G$ (resp. $G'$) has a unique strong Remak decomposition up to equivalence, $H = H_1 \times \cdots \times H_p$ (resp. $H' = H_1' \times \cdots \times H_q'$). 
Then~$G$ is commensurable with~$G'$ if and only if $p=q$ and, up to permutation of the factors, $H_i$ is commensurable with~$H_i'$ for every $i \in \{1, \dots, p \}$.
\end{proposition}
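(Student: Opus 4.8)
The plan is to prove both implications directly from the definition of commensurability, using the uniqueness of the strong Remak decompositions as the essential tool. The "if" direction is the easy one: suppose $p=q$ and, after permuting, $H_i$ is commensurable with $H_i'$ for every $i$. Then for each $i$ there are finite index subgroups $K_i \le H_i$ and $K_i' \le H_i'$ with $K_i \simeq K_i'$. Setting $K = K_1 \times \cdots \times K_p$ and $K' = K_1' \times \cdots \times K_p'$, one checks that $K$ has finite index in $H$ (hence in $G$, since $H$ has finite index in $G$), likewise $K'$ has finite index in $G'$, and the product of the isomorphisms gives $K \simeq K'$. So $G$ and $G'$ are commensurable.

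For the "only if" direction, suppose $G$ and $G'$ are commensurable, so there are finite index subgroups $U \le G$ and $U' \le G'$ with an isomorphism $f : U \to U'$. First I would intersect with the given Remak decomposition to replace $U$ by a better-behaved subgroup: the key observation is that $H \cap U$ has finite index in $G$ and, crucially, $(H_i \cap U)$ has finite index in $H_i$, so $H_i \cap U$ is again strongly indecomposable (a finite index subgroup of a strongly indecomposable group is strongly indecomposable, since "strongly indecomposable" only refers to finite index subgroups). However, $(H_1 \cap U) \times \cdots \times (H_p \cap U)$ need not equal $H \cap U$; to fix this one passes to a further finite index subgroup. The cleanest route: let $V = U \cap U'' $ where $U''$ is chosen so that $V$ decomposes compatibly. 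Concretely, I would argue that $G$ itself has a strong Remak decomposition $H = H_1 \times \cdots \times H_p$, and since $f(U) = U'$ is a finite index subgroup of $G'$, the decomposition $U' \cap H' = \prod_i (U' \cap H_i')$ — after again passing down to a common finite index subgroup inside both $U$ and its image — can be pulled back via $f^{-1}$.

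The heart of the argument, and the step I expect to be the main obstacle, is the following: given a finite index subgroup $L$ of $G$ that decomposes as $L = L_1 \times \cdots \times L_r$ with each $L_j$ strongly indecomposable, I must show $r = p$ and, up to permutation, $L_j$ is commensurable with $H_j$. This is exactly the uniqueness-up-to-equivalence of the strong Remak decomposition of $G$, which is part of the hypothesis on $G$ (and symmetrically on $G'$). So the real work is to reduce the commensurability data $(U, U', f)$ to a situation where one genuinely has two strong Remak decompositions of the \emph{same} group to compare. I would do this by taking a finite index subgroup $W \le U$ small enough that both $W$ decomposes as a product of strongly indecomposable factors refining $H$ (using $H_i \cap W$) and $f(W) \le U'$ decomposes as a product refining $H'$. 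Then $W = (H_1 \cap W) \times \cdots$ need not hold on the nose, but $W$ contains a finite index subgroup of the form $\prod_i(H_i \cap W)$; I would replace $W$ by this product. Transporting via $f$, the group $f(W)$ then has two strong Remak decompositions: the one coming from $\prod_i(H_i \cap W)$ (pushed forward) and the one coming from $\prod_j (H_j' \cap f(W))$. By the uniqueness hypothesis applied to the group $f(W)$ — or rather, by uniqueness applied to $G'$, noting $f(W)$ has finite index in $G'$ and both decompositions are strong Remak decompositions of $G'$ — these are equivalent: the number of factors agree, so $p = q$, and after permuting, $H_i \cap W$ is commensurable with $H_i' \cap f(W)$, whence $H_i$ is commensurable with $H_i'$. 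The technical care needed is precisely in checking that a product of finite index subgroups of the factors has finite index in the big product (true, with index the product of the indices), and that commensurability is transitive — both routine, but they are the glue holding the reduction together.

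One point worth flagging in the write-up: the hypothesis that $G$ and $G'$ are \emph{infinite} is used to ensure the strong Remak decompositions are nonempty and that "strongly indecomposable" is the right notion (a finite group has no infinite finite-index subgroup), and the definition of strong Remak decomposition already builds in that each factor is strongly indecomposable, hence infinite; this guarantees that when we count $\Z$-ranks or apply uniqueness, no degenerate factors sneak in. I would state the transitivity of commensurability and the "finite index in a product" fact as two one-line observations at the start, then give the "if" direction in a sentence, and devote the bulk of the proof to the reduction described above.
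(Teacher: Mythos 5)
Your proposal is correct and follows essentially the same route as the paper: for the forward direction, intersect the commensurating subgroup $K$ with each factor to form $K_i = K \cap H_i$, replace $K$ by $\prod_i K_i$ (a strong Remak decomposition of $G$), push it forward through the isomorphism to obtain a strong Remak decomposition of $G'$, and invoke the uniqueness hypothesis on $G'$; the easy direction is identical. Your write-up circles a bit before landing there (the ``$V = U \cap U''$'' digression and the worry about making $f(W)$ decompose compatibly with $H'$ are unnecessary, since uniqueness applied to $G'$ directly compares the pushed-forward decomposition with $H'$), but the argument you ultimately settle on matches the paper's.
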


\begin{proof}

Suppose that $G$ and $G'$ are commensurable.
There is a finite index subgroup~$K$ of~$G$ and a finite index subgroup~$K'$ of~$G'$ such that $K \simeq K'$.
Let $\varphi : K \to K'$ be an isomorphism between~$K$ and~$K'$.
For every $i \in \{1, \dots, p\}$ we take $K_i = K \cap H_i$ and $U = K_1 \times \cdots \times K_p$.
As~$K$ has finite index in~$G$, $K_i$ has finite index in~$H_i$.
It follows that~$U$ is a finite index subgroup of~$H$ (and of~$G$) and $U = K_1 \times \cdots \times K_p$ is a strong Remak decomposition of~$G$.
The group~$U$ is a finite index subgroup of~$K$, so $\varphi(U) = \varphi(K_1) \times \cdots \times \varphi (K_p)$ is a finite index subgroup of $\varphi(K) = K'$ and then also a finite index subgroup of~$G'$.
The subgroups $\varphi (K_i)$ ($i \in \{1, \dots, p\})$ are strongly indecomposable, hence $\varphi(U) = \varphi(K_1) \times \cdots \times \varphi (K_p)$ is a strong Remak decomposition of~$G'$.
As~$G'$ has only one decomposition of that form (up to equivalence), we have that $p=q$ and~$\varphi (K_i)$ is commensurable with~$H_i'$ for every $i \in \{1, \dots, p\}$, up to permutation of the factors.
Also, as $K_i \simeq \varphi(K_i)$ is a finite index subgroup of~$H_i$, it follows that $H_i$ and $H_i'$ are commensurable for every $i \in \{1, \dots, p\}$.

\bigskip\noindent
Suppose that $p = q$ and that $H_i$ is commensurable with~$H_i'$ for every $i \in \{1, \dots, p\}$.
There is a finite index subgroup~$K_i$ of~$H_i$ and a finite index subgroup~$K_i'$ of~$H_i'$ such that $K_i \simeq K_i'$.
Take $U = K_1 \times \cdots \times K_p$ and $U' = K_1' \times \cdots \times K_p'$.
As $K_i$ has finite index in~$H_i$ for every $i$, the subgroup~$U$ has finite index in~$H$ and also has finite index in~$G$.
Analogously, $U'$ has finite index in~$G'$.
It is obvious that~$U$ and~$U'$ are isomorphic. Therefore, $G$ and $G'$ are commensurable.
\end{proof}

\bigskip\noindent
{\bf Proof of \autoref{reduction}.}
Let $\Gamma$ and~$\Omega$ be two Coxeter graphs of spherical type.
Let $\Gamma_1, \dots, \Gamma_p$ be the connected components of~$\Gamma$ and $\Omega_1, \dots, \Omega_q$ be the connected components of~$\Omega$.
If $p=q$ and~$A[\Gamma_i]$ and~$A[\Omega_i]$ are commensurable for every $i \in \{1, \dots, p\}$, then it is clear that~$A [\Gamma]$ and~$A [\Omega]$ are commensurable.
Then suppose that~$A [\Gamma]$ and~$A [\Omega]$ are commensurable. We need to show that $p=q$ and that~$A [\Gamma_i]$ and~$A [\Omega_i]$ are commensurable for every $i \in \{1, \dots, p\}$ up to permutation of the indices.

\bigskip\noindent
Suppose that every $\Gamma_1, \dots, \Gamma_k$ has a least two vertices and that each of $\Gamma_{k+1}, \dots, \Gamma_p$ is reduced to a single vertex.
Analogously, suppose that every $\Omega_1 , \dots, \Omega_\ell$ has a least two vertices and that each of $\Omega_{\ell+1}, \dots, \Omega_q$ is reduced to a single vertex.
By \autoref{decomposition}, $\CA [\Gamma] = \overline{\CA [\Gamma_1]} \times \cdots \times \overline{\CA [\Gamma_k]} \times Z(\CA [\Gamma_1]) \times \cdots \times Z (\CA [\Gamma_p])$ and $\CA [\Omega] = \overline{\CA [\Omega_1]} \times \cdots \times \overline{\CA [\Omega_\ell]} \times Z(\CA [\Omega_1]) \times \cdots \times Z(\CA [\Omega_q])$ are strong Remak decompositions of~$A[\Gamma]$ and~$A[\Omega]$, respectively, and they are unique up to equivalence.
Let $i \in \{1, \dots, k\}$ and $j \in \{1, \dots, q\}$.
Let $U$ be a finite index subgroup of~$\overline{\CA [\Gamma_i]}$ and let~$V$ be a finite index subgroup of~$Z (\CA [\Omega_j])$.
By \autoref{finiteindexofCA} we have that $Z (U) = \{ 1 \}$.
Moreover, $V$~is a finite index subgroup of $Z (\CA [\Omega_j]) \simeq \Z$, hence $V \simeq \Z$.
Then, $U$ and $V$ are not isomorphic.
This shows $\overline{\CA [\Gamma_i]}$ and $Z (\CA [\Omega_j])$ are not commensurable.

\bigskip\noindent
By applying \autoref{prop41}, we know that $k = \ell$, $p=q$ and that $\overline{ \CA [\Gamma_i]}$ and $\overline{\CA [\Omega_i]}$ are commensurable for every $i \in \{1, \dots, k\}$, up to permutation of the indices.
Let $i \in \{1, \dots, k\}$.
As $\overline{ \CA [\Gamma_i]}$ and $\overline{\CA [\Omega_i]}$ are commensurable, by \autoref{corolariotecnico}, $\overline {A [\Gamma_i]}$ and~$\overline{ A[\Omega_i]}$ are commensurable. Then, by \autoref{tecnico}, $A [\Gamma_i]$ and~$A [\Omega_i]$ are commensurable.
Let $i \in \{k+1, \dots, p\}$.
Thus, $A [\Gamma_i] \simeq \Z \simeq A[\Omega_i]$, having that~$A [\Gamma_i]$ and~$A [\Omega_i]$ are commensurable.
\qed

\medskip

\begin{remark}
An alternative proof of \autoref{reduction}, based on \citep[Theorem~B]{KKL}, has been communicated to us by one of the referees. The idea is as follows. Consider the decomposition (\ref{eq3_1}) given in the proof of \autoref{decomposition}. We can show using \citep{CalvezWiest}, \citep{Sisto} and \citep[Proposition~4.2]{KKL} that each~$\overline{\CA[\Gamma_i]}$ is of coarse type I. It follows by \citep[Theorem~B]{KKL} that the decomposition~(\ref{eq3_1}) is unique up to quasi-isometry. To pass from quasi-isometry to commensurability we have to apply again \citep[Theorem~B]{KKL} in the following manner. Let~$\Gamma$ and~$\Omega$ be two Coxeter graphs of spherical type. We assume that~$\CA[\Gamma]$ and~$\CA[\Omega]$ are commensurable and we consider the same decompositions of~$\CA[\Gamma]$ and~$\CA[\Omega]$ as in the above proof. Then, there exist finite index subgroups~$U$ of~$\CA[\Gamma]$ and~$V$ of~$\CA[\Omega]$ and an isomorphism $f\colon U \rightarrow V$. We extend $f$ to a quasi-isometry $f\colon \CA[\Gamma]\rightarrow \CA[\Omega]$. By applying \citep[Theorem~B]{KKL}, we obtain that $p=q$, $k=\ell$ and, for each $i\in \{i,\dots,k\}$ there exists $j\in\{1,\dots,k\}$ such that the composition $\overline{\CA[\Gamma_i]}\rightarrow \CA[\Gamma]\rightarrow \CA[\Omega]\rightarrow \overline{\CA[\Omega_j]}$ is a quasi-isometry, where the first map is the inclusion, the second map is $f$, and the third map is the projection. This map restricted to $U\cap \overline{\CA[\Gamma_i]}$ is an injective homomorphism whose image must be of finite index in $\CA[\Omega_j]$.

\end{remark}

\section{Comparison with the Artin group of type~$A_{\lowercase{n}}$}\label{chapter5}

Let $n \in \N$, $n \ge 2$, and let~$\Gamma$ be a connected Coxeter graph of spherical type with $n$~vertices.
Recall that the aim of this section is to determine whether~$A[\Gamma]$ and~$A[A_n]$ are commensurable or not.
We start with the cases where~$A[\Gamma]$ and~$A[A_n]$ are commensurable.

\begin{lemma}\label{AnBn}
Let $n \ge 2$.
Then~$A[A_n]$ and~$A[B_n]$ are commensurable.
\end{lemma}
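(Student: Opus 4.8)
The plan is to exhibit an explicit finite-index subgroup common to both $A[A_n]$ and $A[B_n]$, using the well-known geometric realization of these Artin groups as (subgroups of) surface braid groups. Recall that $A[B_n]$ is naturally isomorphic to the subgroup of the braid group $A[A_n]$ on $n+1$ strands consisting of braids that fix the last strand, equivalently the ``annular braid group'' on $n$ strands; concretely, there is an isomorphism between $A[B_n]$ and the subgroup of $A[A_n] = B_{n+1}$ generated by $\sigma_1, \dots, \sigma_{n-1}$ together with $\sigma_n^2$, or alternatively $A[B_n] \cong \pi_1$ of the configuration space of $n$ points in the punctured disk. This latter description gives $A[B_n]$ as an index-$(n+1)$ subgroup of $A[A_{n}]$ realized on $n+1$ strands? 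Rather, the cleanest route I would take is the following: both $A[A_n]$ and $A[B_n]$ contain finite-index subgroups isomorphic to a common ``pure'' object.

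Here is the concrete approach. First I would recall that $A[B_n]$ is isomorphic to the kernel of the homomorphism $A[A_n] = B_{n+1} \to \Z/(n+1)$ sending each standard generator $\sigma_i$ to $1$ — no, that kernel is not quite it either. The robust and standard fact I would invoke is: the map $A[B_n] \to B_{n+1}$, $\tau_1 \mapsto \sigma_1, \dots, \tau_{n-1}\mapsto \sigma_{n-1}$, $\tau_n \mapsto \sigma_n^2$ (where $\tau_1, \dots, \tau_n$ are the standard Artin generators of $A[B_n]$, with $\tau_{n-1}$ and $\tau_n$ satisfying the length-$4$ braid relation) is injective with image the subgroup of $B_{n+1}$ of braids whose underlying permutation fixes the point $n+1$; this subgroup has index $n+1$ in $B_{n+1}$. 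This is classical — it appears in work of tom Dieck, Crisp, Charney, Lambropoulou, etc. So $A[B_n]$ embeds as a finite-index (index $n+1$) subgroup of $A[A_n]$, which immediately gives commensurability: take $H_1 = A[B_n] \le A[A_n]$ as the index-$(n+1)$ subgroup and $H_2 = A[B_n]$ itself, with the identity isomorphism.

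The main steps, in order: (i) fix the presentations of $A[A_n]$ (generators $\sigma_1, \dots, \sigma_n$, braid relations) and of $A[B_n]$ (generators $\tau_1,\dots,\tau_n$, with $m_{\tau_{n-1},\tau_n}=4$ and all other labels $\le 3$); (ii) define the homomorphism $\phi: A[B_n] \to A[A_n]$ by $\phi(\tau_i)=\sigma_i$ for $i<n$ and $\phi(\tau_n)=\sigma_n^2$, checking it respects all the defining relations (the only nontrivial check being that $\sigma_{n-1}\sigma_n^2\sigma_{n-1}\sigma_n^2 = \sigma_n^2\sigma_{n-1}\sigma_n^2\sigma_{n-1}$, which follows from the braid relation $\sigma_{n-1}\sigma_n\sigma_{n-1}=\sigma_n\sigma_{n-1}\sigma_n$ by a short computation); (iii) identify the image of $\phi$ with the stabilizer in $B_{n+1}$ of the $(n+1)$-st strand under the natural action on $\{1,\dots,n+1\}$ via the permutation quotient $B_{n+1}\to \SSS_{n+1}$, and observe this stabilizer has index $n+1$; (iv) prove $\phi$ is injective — this is where I would either cite the literature directly (tom Dieck / Crisp / Charney) or give the argument via the fibration of configuration spaces: the subgroup of $B_{n+1}$ fixing the last strand is $\pi_1$ of the configuration space of $n$ ordered-plus-$1$ points, which fibers over the configuration space of $n$ points in the once-punctured disk with contractible fiber, and $\pi_1$ of the latter is $A[B_n]$ by tom Dieck's theorem. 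Then conclude.

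The main obstacle is step (iv), the injectivity of $\phi$: verifying the relations and the index count are routine, but proving that no further relations hold in the image requires either the topological input (configuration-space fibration identifying $A[B_n]$ with the annular braid group) or an appeal to the known presentation of the strand-stabilizer subgroup of $B_{n+1}$. Since this is a classical and well-documented fact, I expect the proof in the paper to simply cite it — likely tom Dieck, Crisp, or Charney–Peifer — rather than reprove it, so the ``hard part'' is really just locating the cleanest citation; everything else is a two-line relation check plus an index computation.
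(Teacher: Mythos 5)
Your proposal is correct and is essentially the same argument as in the paper: both realize $A[B_n]$ as the preimage in $A[A_n]=B_{n+1}$ of a point-stabilizer parabolic in the symmetric group $W[A_n]=\SSS_{n+1}$ (index $n+1$), citing the classical identification of this strand-stabilizer subgroup with $A[B_n]$ (the paper cites Lambropoulou; you cite the same circle of results by tom Dieck, Crisp, Charney, Lambropoulou). The paper just states it more compactly as $\theta^{-1}(H)$ with $H=\langle s_2,\dots,s_n\rangle$, while you unwind the explicit map $\tau_i\mapsto\sigma_i$, $\tau_n\mapsto\sigma_n^2$.
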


\begin{proof}
Let $\theta: A[A_n] \to W[A_n]$ be the quotient homomorphism and let~$H$ be the subgroup of~$W[A_n]$ generated by $\{s_2, \dots, s_n\}$.
By \citep{Lambr1}, $\theta^{-1}(H)$ is isomorphic to~$A[B_n]$. It has finite index in~$A[A_n]$ because~$W[A_n]$ is finite, hence~$A[A_n]$ and~$A[B_n]$ are commensurable. 
\end{proof}

\begin{lemma}
Let $p \ge 5$.
Then $A[A_2]$ and $A[I_2(p)]$ are commensurable.
\end{lemma}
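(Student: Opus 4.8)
The plan is to exhibit a common finite-index subgroup by realizing $A[I_2(p)]$ as the preimage, under the canonical projection $\theta : A[A_2] \to W[A_2]$, of a suitable subgroup of the finite group $W[A_2] = \SSS_3$, exactly in the spirit of the proof of \autoref{AnBn}. The dihedral Artin group $A[I_2(p)]$ has presentation $\langle a,b \mid \Pi(a,b,p) = \Pi(b,a,p) \rangle$, and its abelianization (or the homomorphism sending each generator to $1$) detects an index-$p$ structure that should match a branched-type subgroup of $A[A_2] = \langle s_1, s_2 \mid s_1 s_2 s_1 = s_2 s_1 s_2 \rangle$, the three-strand braid group. Concretely, $W[A_2] = \SSS_3$ has a cyclic subgroup $C$ of order $3$ (generated by $s_1 s_2$), and I would compute $\theta^{-1}(C)$, which has index $[\SSS_3 : C] = 2$ in $A[A_2]$ and hence is commensurable with $A[A_2]$.

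The key steps, in order, are: first, identify $\theta^{-1}(C)$ as an explicit group on generators; since $C = \{1, s_1 s_2, (s_1 s_2)^2\}$ and $\theta(s_i^2) = 1$, the subgroup $\theta^{-1}(C)$ is generated by $s_1^2$, $s_2^2$ and $s_1 s_2$ (or a comparable set obtained by a Reidemeister--Schreier computation with transversal $\{1, s_1\}$). Second, massage this presentation: the relation $s_1 s_2 s_1 = s_2 s_1 s_2$ in $A[A_2]$ should, after rewriting, produce a single relation between two of the Schreier generators that has the form of the $I_2(p)$ relation with $p$ the relevant order. The cleanest route is probably to use the well-known fact that the center of $A[A_2]$ is generated by $\delta = (s_1 s_2)^3$ and that $A[A_2]/Z \simeq \PSL_2(\Z) \simeq \Z/2 * \Z/3$; inside $\PSL_2(\Z)$ one finds the desired dihedral-Artin quotient structure by passing to an appropriate index-$p$ or congruence-type subgroup and pulling back the central extension. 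Third, appeal to a reference (e.g.\ \citep{Lambr1} or the analogous statement there for rank-two Artin groups as fixed subgroups / preimages) to conclude that the group so obtained is isomorphic to $A[I_2(p)]$, and note finite index follows from $|\SSS_3| < \infty$ (or from the index of the congruence subgroup). Then $A[A_2]$ and $A[I_2(p)]$ share a finite-index subgroup, so they are commensurable.

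The main obstacle I expect is the middle step: getting the presentation of the finite-index subgroup to come out \emph{exactly} as an $I_2(p)$ Artin group rather than merely as some group with an $I_2(p)$-like relator plus extra generators or relations. A Reidemeister--Schreier computation on $A[A_2]$ with a transversal of size $2$ or $p$ is mechanical but needs care to eliminate redundant generators and to recognize the surviving relator as $\Pi(a,b,p) = \Pi(b,a,p)$; the cleanest path may instead route through the isomorphism $A[A_2]/Z \simeq \Z/2 * \Z/3$, find a dihedral subgroup of $I_2(p)/Z$ inside it, and lift. If a citable statement identifying the relevant subgroup of $A[A_2]$ (or of the braid group $B_3$) with $A[I_2(p)]$ is available — and the reference \citep{Lambr1}, already used for the $A_n$/$B_n$ case, plausibly contains the rank-two analogue — then that citation short-circuits the calculation entirely, which is the proof I would ultimately write.
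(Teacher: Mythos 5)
Your main route cannot work as stated. You propose to realize $A[I_2(p)]$ as $\theta^{-1}(C)$ for some subgroup $C$ of $W[A_2]=\SSS_3$; but $\SSS_3$ has only finitely many subgroups, so this yields only finitely many subgroups of $A[A_2]$, independent of $p$ --- it cannot produce $A[I_2(p)]$ for every $p\ge 5$. Worse, $A[I_2(p)]$ is in fact \emph{never} a finite index subgroup of $A[A_2]$ when $p\ge 5$. One quick way to see this: passing to quotients by centers and applying \autoref{tecnico}\,(1), a finite-index embedding $A[I_2(p)]\hookrightarrow A[A_2]$ would induce a finite-index embedding $\overline{A[I_2(p)]}\hookrightarrow \overline{A[A_2]}\simeq \PSL_2(\Z)\simeq \Z/2 * \Z/3$. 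But $\overline{A[I_2(p)]}$ contains torsion of order $>3$ (e.g.\ for $p$ odd it is $\Z/2*\Z/p$), while every finite-order element of $\Z/2*\Z/3$ has order at most $3$; alternatively, the rational Euler characteristics $-1/6$ and $1/2+1/p-1$ fail the required divisibility. So no subgroup construction in $A[A_2]$ alone --- whether by $\theta^{-1}$, Reidemeister--Schreier, or congruence subgroups in $\PSL_2(\Z)$ --- can hand you a copy of $A[I_2(p)]$.

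What is actually needed is a \emph{common} finite-index subgroup, not a containment, and neither group needs to sit inside the other. The paper achieves this via \autoref{tecnico}: Parts (2) and (3) reduce the commensurability of $A[\Gamma]$ and $A[\Omega]$ to the commensurability of $\overline{\CA[\Gamma]}$ and $\overline{\CA[\Omega]}$, and the Hopf-fibration description of the hyperplane-arrangement complement identifies $\overline{\CA[I_2(p)]}$ with $\pi_1(\P^1\C \setminus \{p\ \text{points}\})\simeq F_{p-1}$, and $\overline{\CA[A_2]}\simeq F_2$. Since $F_{p-1}$ embeds with finite index in $F_2$, the two groups are commensurable, and \autoref{corolariotecnico} and \autoref{tecnico}\,(3) carry this back to $A[A_2]$ and $A[I_2(p)]$. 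Your final paragraph gestures in this direction when you mention routing through $\PSL_2(\Z)\simeq\Z/2*\Z/3$ and pulling back central extensions, and that instinct is sound, but you would still need to (a) locate a common finite-index free subgroup of the two \emph{quotients} rather than embed one quotient in the other, and (b) show the central extensions split compatibly after passing to finite index --- which is precisely what \autoref{tecnico}\,(2) provides. Without that, the proposal is not a proof.
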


\begin{proof}
Let $\Gamma = I_2 (p)$.
Then $A [\Gamma] = \langle s,t \mid \Pi (s,t,p) = \Pi(t,s,p) \rangle$.
We consider the construction of the proof of \autoref{tecnico}\,(2).
Let $V = \R e_s \oplus \R e_t$.
By \citep{Bourbaki}, $W = W[\Gamma]$ has a faithful linear representation $\rho : W \to \GL (V)$, and~$\rho (W)$ is generated by reflections.
In our case, $W$~is the dihedral group of order~$2p$ and $\rho : W \to \GL (V)$ is the standard representation of~$W$.
Let~$\HH$ be the set of reflection lines of~$W$. 
Take $V_\C = \C \otimes V$, $H_\C = \C \otimes H$ for every $H \in \HH$, and $M = V_\C \setminus \left( \cup_{H \in \HH} H_\C \right)$.
Let $h : V_\C \setminus \{ 0\} \to \P V_\C$ be the Hopf fibration and $\overline{M} = h(M)$.
Thanks to the proof of \autoref{tecnico}\,(2), we know that $\pi_1 (\overline{M}) = \overline{\CA [\Gamma]}$.

\bigskip\noindent
In this case, $\P V_\C$ is the complex projective line and~$\overline{M}$ is the complement of~$|\HH| = p$ points in~$\P V_\C$, hence $\overline{\CA [\Gamma]} = \pi_1 (\overline{M})$ is isomorphic to the free group~$F_{p-1}$ of rank~$p-1$.
Analogously, $\overline{\CA [A_2]}$ is isomorphic to~$F_2$.
As~$F_{p-1}$ is isomorphic to a finite index subgroup of~$F_2$, it follows that~$\overline {\CA [\Gamma]}$ and~$\overline{\CA [A_2]}$ are commensurable.
By \autoref{corolariotecnico}, we have that~$\overline{A [A_2]}$ and~$\overline{A [\Gamma]}$ are commensurable. Therefore, by \autoref{tecnico}, $A[A_2]$ and $A [\Gamma]$ are commensurable.
\end{proof}

\bigskip\noindent
Let $\Sigma = \Sigma_{g,b}$ be the orientable surface of genus~$g$ with $b$~boundary components.
Let~$\PP_n$ be a collection of $n$~different points in the interior of~$\Sigma$.
Recall that the \emph{mapping class group} of the pair $(\Sigma, \PP_n)$, denoted by $\MM (\Sigma, \PP_n)$, is the group of isotopy classes of homeomorphisms $h : \Sigma \to \Sigma$ that preserve the orientation, fix the boundary of~$\Sigma$ pointwise and preserve~$\PP_n$ setwise. 
The \emph{extended mapping class group} of the pair $(\Sigma, \PP_n)$, denoted by~$\MM^* (\Sigma, \PP_n)$, is the group of isotopy classes of homeomorphisms  $h : \Sigma \to \Sigma$ that fix the boundary of~$\Sigma$ pointwise and preserve $\PP_n$ setwise.
Notice that, if the surface $\Sigma$ has non-empty boundary, the homeomorphisms fixing this boundary pointwise cannot change the orientation of~$\Sigma$ and we have $\MM^*(\Sigma, \PP_n)=\MM(\Sigma, \PP_n)$. 
Otherwise, $\MM (\Sigma, \PP_n)$ has index~$2$ in~$\MM^* (\Sigma_n,\PP_n)$.

\bigskip\noindent
Denote by $\SSS_n$ the permutation group of $\{1,\dots,n\}$. The action of $\MM^*(\Sigma, \PP_n)$ on~$\PP_n$ induces a homomorphism $\theta' : \MM^* (\Sigma, \PP_n) \to \SSS_n$, whose kernel is denoted by $\PP\MM^*(\Sigma, \PP_n)$.
On the other hand, we can define another homomorphism $\omega : \MM^* (\Sigma, \PP_n) \to \{ \pm 1 \}$ sending an element $h \in \MM^* (\Sigma, \PP_n)$ to $1$ if it preserves the orientation and to $-1$ otherwise. 
Notice that the kernel of~$\omega$ is $\MM(\Sigma, \PP_n)$. 
These two homomorphisms lead to the construction of the homomorphism $\hat{\theta}: \MM^* (\Sigma, \PP_n) \to \SSS_n \times \{ \pm 1\}$ defined by $h\mapsto (\theta'(h), \omega(h))$.
The kernel of~$\hat \theta$ is called the \emph{pure mapping class group} of the pair $(\Sigma, \PP_n)$ and it is denoted by $\PP \MM (\Sigma, \PP_n)$.

\bigskip\noindent
These mapping class groups and the problem that we are studying are related by the following theorem.

\begin{theorem}[\citealp{CharneyCrisp}]\label{CharneyCrisp}
Let $\Sigma=\Sigma_{0,0}$ and let $\PP_{n+2}$ be a family of $n+2$~points in~$\Sigma$.
Then $\Com( \overline{ A [A_n]}) \simeq \MM^* (\Sigma, \PP_{n+2})$.
\end{theorem}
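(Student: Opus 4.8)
The plan is to realize $\overline{A[A_n]}$ as a finite-index subgroup of $\MM^*(\Sigma_{0,0},\PP_{n+2})$ and then to compute the commensurator of the latter by means of mapping-class-group rigidity. First I would recall that $A[A_n]$ is the braid group on $n+1$ strands, that is, the mapping class group of a disc with $n+1$ marked points relative to its boundary, and that $Z(A[A_n])$ is generated by the Dehn twist $T$ along a curve parallel to the boundary. Gluing a once-marked disc onto that boundary component and invoking the standard capping exact sequence (a form of the Birman exact sequence), one obtains $1 \to \langle T\rangle \to A[A_n] \to \MM' \to 1$, where $\MM'$ is the subgroup of $\MM(\Sigma_{0,0},\PP_{n+2})$ fixing the distinguished marked point coming from the cap. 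Since $\langle T\rangle = Z(A[A_n])$, this identifies $\overline{A[A_n]}$ with $\MM'$, a subgroup of index $2(n+2)$ in $\MM^*(\Sigma_{0,0},\PP_{n+2})$; in particular $\overline{A[A_n]}$ and $\MM^*(\Sigma_{0,0},\PP_{n+2})$ are commensurable. As commensurable groups have isomorphic commensurators, it then suffices to prove $\Com(\MM^*(\Sigma_{0,0},\PP_{n+2})) \simeq \MM^*(\Sigma_{0,0},\PP_{n+2})$ in the range $n+2\ge 5$ where the statement is applied (some lower bound on $n$ is unavoidable: $\overline{A[A_2]}\simeq \mathrm{PSL}_2(\Z)$ has commensurator $\mathrm{PGL}_2(\mathbb{Q})$).

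Write $\MM^* = \MM^*(\Sigma_{0,0},\PP_{n+2})$ and let $\CC$ denote its curve complex. The homomorphism $\MM^*\to\Com(\MM^*)$ induced by conjugation is injective, since for $n+2\ge 5$ a mapping class that centralizes a finite-index subgroup already acts trivially on enough simple closed curves to be the identity. For surjectivity I would follow Ivanov's method: represent an element of $\Com(\MM^*)$ by an isomorphism $f\colon U \to V$ between finite-index subgroups of $\MM^*$, and then show that $f$ sends the set of powers of Dehn twists contained in $U$ to the corresponding set in $V$ — this relies on an algebraic characterization of such elements (within a finite-index subgroup one meets only proper powers of twists, so the characterization has to be made through their centralizers and root structure) — and that $f$ preserves the relation ``the underlying curves are disjoint''. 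It follows that $f$ induces a simplicial automorphism of $\CC$, which by the Ivanov--Korkmaz theorem on curve complexes of punctured spheres (valid for $n+2\ge 5$) is geometric, induced by a unique $g\in\MM^*$; a last check that $f$ and conjugation by $g$ coincide on a finite-index subgroup shows that they define the same element of $\Com(\MM^*)$, so $\MM^*\to\Com(\MM^*)$ is onto. Combining this isomorphism with the first paragraph gives $\Com(\overline{A[A_n]}) \simeq \MM^*(\Sigma_{0,0},\PP_{n+2})$.

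The hard part will be the surjectivity step, and within it the purely algebraic recognition, inside an arbitrary finite-index subgroup $U$ of $\MM^*$, of the powers of Dehn twists and of the simple closed curves they carry: one must verify that this data is preserved by $f$ and that it assembles into a well-defined, incidence-preserving map on the vertices of $\CC$, which is what lets Ivanov--Korkmaz take over. The other delicate points are the exceptional low-complexity surfaces — whence the hypothesis $n+2\ge 5$ — and the bookkeeping of orientation, which is exactly what forces the answer to be the extended mapping class group $\MM^*$ and not $\MM$.
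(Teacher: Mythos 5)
The statement you were asked to prove is one the paper does not prove at all: it is imported verbatim from the literature (cited as Charney--Crisp, with Leininger--Margalit also mentioned in the introduction as a reference for the commensurator of the braid group), and the paper simply uses it as a black box in \autoref{corolariocontraejemplos} and in the $H_3$ case. So there is no in-paper proof to compare against.

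That said, your sketch is a faithful reconstruction of the strategy actually used in that literature. The reduction is correct: the capping (Birman) sequence identifies $\overline{A[A_n]} = B_{n+1}/Z$ with the stabilizer of a marked point in $\MM(\Sigma_{0,0},\PP_{n+2})$, which has index $2(n+2)$ in $\MM^*(\Sigma_{0,0},\PP_{n+2})$ (this is essentially the content of the paper's \autoref{PMCA}, stated there for $\overline{\CA[A_n]} = \PP\MM(\Sigma_{0,0},\PP_{n+2})$); and since commensurable groups have isomorphic abstract commensurators, the problem reduces to $\Com(\MM^*)\simeq\MM^*$. Your treatment of injectivity is correct in outline (a class centralizing a finite-index subgroup must fix $T_c^k$ for every curve $c$ and some $k$, hence fix every curve, hence be the identity by the Alexander method, provided $n+2\ge 5$). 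You also correctly identify the genuinely hard step, which is the algebraic recognition of powers of Dehn twists inside an arbitrary finite-index subgroup and the verification that disjointness of the underlying curves is detected algebraically; this is precisely the technical core of Leininger--Margalit's argument, and it is where almost all of the work lies. Your remark that the hypothesis $n+2\ge 5$ is necessary (and why, via $\overline{A[A_2]}\simeq\mathrm{PSL}_2(\Z)$ and the Farey-graph degeneracy for the four-punctured sphere) is a genuine and correct observation; the paper states the theorem without an explicit range but only invokes it for $n\ge 3$, so the implicit restriction is consistent.

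In short: this is a correct high-level blueprint for a theorem the paper takes as given, and the only caveat is that it is a blueprint. Filling in the twist-recognition step and the final uniqueness check (that $f$ and $c_g$ agree on a finite-index subgroup) would reproduce a substantial published proof, not a short lemma.
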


\begin{lemma}\label{PMCA}
Let $\Sigma=\Sigma_{0,0}$ and let~$\PP_{n+2}$ be a family of $n+2$~points in~$\Sigma$.
Then $\Ker(\hat{\theta}) = \PP\MM(\Sigma, \PP_{n+2}) \simeq \overline{\CA[A_{n}]}$.
\end{lemma}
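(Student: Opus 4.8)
The plan is to identify $\overline{\CA[A_n]}$ concretely as a subgroup of $\Com(\overline{A[A_n]}) \simeq \MM^*(\Sigma_{0,0}, \PP_{n+2})$ (via \autoref{CharneyCrisp}) and then compute which subgroup it is. By \autoref{tecnico}\,(4) the group $\overline{A[A_n]}$ injects into its commensurator, and by \autoref{corolariotecnico} the subgroup $\overline{\CA[A_n]}$ sits inside $\overline{A[A_n]}$ with finite index, so we get an injection $\overline{\CA[A_n]} \hookrightarrow \MM^*(\Sigma, \PP_{n+2})$ of finite index, namely of index $|W[A_n]| = (n+1)!$. First I would recall the classical fact that the braid group $A[A_n]$ is the mapping class group of the $(n+1)$-times punctured disc, so $\overline{A[A_n]} = A[A_n]/Z(A[A_n])$ is the mapping class group of the $(n+2)$-punctured sphere $\Sigma_{0,0}$ with $\PP_{n+2}$ marked points, where the quotient by the centre corresponds to forgetting the framing/passing from the disc to the sphere; this is essentially the content underlying \autoref{CharneyCrisp}, and it realises the inclusion $\overline{A[A_n]} \hookrightarrow \MM^*(\Sigma, \PP_{n+2})$ as an isomorphism onto $\MM(\Sigma, \PP_{n+2})$ (the orientation-preserving subgroup, i.e. $\Ker(\omega)$), which has index $2$.

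Next I would trace through the three homomorphisms $\theta'$, $\omega$, $\hat\theta$ under this identification. Under $\overline{A[A_n]} \cong \MM(\Sigma, \PP_{n+2})$, the permutation homomorphism $\theta' : \MM^*(\Sigma,\PP_{n+2}) \to \SSS_{n+2}$ restricts to the composite $\overline{A[A_n]} \to W[A_n] = \SSS_{n+1}$ followed by the standard inclusion $\SSS_{n+1} \hookrightarrow \SSS_{n+2}$ fixing the "point at infinity" — because the extra puncture of the sphere is the one coming from the boundary of the disc and is never permuted by a mapping class of the disc. Hence $\Ker(\theta') \cap \overline{A[A_n]}$ is exactly the image of $\CA[A_n]$ modulo the centre, i.e. $\overline{\CA[A_n]}$. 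Moreover $\omega$ restricted to $\MM(\Sigma,\PP_{n+2})$ is trivial by definition, so on $\overline{A[A_n]}$ we have $\hat\theta = (\theta', 1)$ and therefore $\Ker(\hat\theta) = \Ker(\hat\theta) \cap \overline{A[A_n]} = \Ker(\theta') \cap \overline{A[A_n]} = \overline{\CA[A_n]}$. A short index count confirms consistency: $[\MM^*(\Sigma,\PP_{n+2}) : \Ker(\hat\theta)] = 2\,(n+2)!$, while $[\MM^*(\Sigma,\PP_{n+2}) : \overline{A[A_n]}] = 2$ and $[\overline{A[A_n]} : \overline{\CA[A_n]}] = |W[A_n]| = (n+1)!$, and indeed $2 \cdot (n+1)! \cdot (n+2) = 2\,(n+2)!$.

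The main obstacle is making the identification of $\theta'$ precise — that is, pinning down exactly which of the $n+2$ punctures of the sphere is fixed by all of $\MM(\Sigma,\PP_{n+2}) \cong \overline{A[A_n]}$ and checking that the induced action on the remaining $n+1$ punctures agrees with the Coxeter projection $\theta : A[A_n] \to W[A_n] = \SSS_{n+1}$ under the standard identification of $A[A_n]$ with the braid group. This requires care in bookkeeping the isomorphism of \autoref{CharneyCrisp} and the conventions for how the disc sits inside the sphere; once that is fixed, everything else is a routine diagram chase through the definitions of $\theta'$, $\omega$ and $\hat\theta$. I would therefore structure the write-up as: (i) recall the braid-group/punctured-disc picture and the resulting embedding $\overline{A[A_n]} \cong \MM(\Sigma,\PP_{n+2}) \le \MM^*(\Sigma,\PP_{n+2})$; (ii) identify $\theta'|_{\overline{A[A_n]}}$ with the Coxeter projection and note $\omega|_{\overline{A[A_n]}} \equiv 1$; (iii) conclude $\Ker(\hat\theta) = \overline{\CA[A_n]}$ and verify the indices match.
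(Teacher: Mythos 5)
There is a genuine gap. The argument rests on the claim that the natural embedding $\overline{A[A_n]}\hookrightarrow\MM^*(\Sigma_{0,0},\PP_{n+2})$ is an isomorphism onto the orientation-preserving subgroup $\MM(\Sigma_{0,0},\PP_{n+2})=\Ker(\omega)$ of index~$2$. That is false. Capping the boundary of $\Sigma_{0,1}$ with a once-marked disc gives a homomorphism $\MM(\Sigma_{0,1},\PP_{n+1})\to\MM(\Sigma_{0,0},\PP_{n+2})$ whose kernel is generated by the boundary Dehn twist (which is $Z(A[A_n])$), but whose image is only the stabilizer of the new marked point~$*$: a mapping class of the $(n+2)$-punctured sphere can send $*$ to any of the other $n+1$ punctures, so this stabilizer has index $n+2$ in $\MM(\Sigma_{0,0},\PP_{n+2})$ and index $2(n+2)$ in $\MM^*(\Sigma_{0,0},\PP_{n+2})$, not~$2$. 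Your own index check betrays the problem: from $[\MM^*:\overline{A[A_n]}]=2$ and $[\overline{A[A_n]}:\overline{\CA[A_n]}]=(n+1)!$ you can only conclude $[\MM^*:\overline{\CA[A_n]}]=2\,(n+1)!$, which does not equal $[\MM^*:\Ker(\hat\theta)]=2\,(n+2)!$; the factor $(n+2)$ you inserted in the final equality is exactly the index of the point-stabilizer that your picture dropped.

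The rest of your idea can be repaired once $\overline{A[A_n]}$ is correctly identified with the stabilizer of $*$ in $\MM(\Sigma_{0,0},\PP_{n+2})$: the inclusion $\Ker(\hat\theta)\subset\overline{A[A_n]}$ still holds, since an element fixing every marked point fixes $*$ in particular and preserves orientation, and then $\Ker(\hat\theta)=\Ker(\theta')\cap\overline{A[A_n]}=\overline{\CA[A_n]}$ with the indices now matching. Note, though, that the paper's proof does not pass through \autoref{CharneyCrisp} or the commensurator at all. It uses Artin's identifications $\MM(\Sigma_{0,1},\PP_{n+1})=A[A_n]$ and $\PP\MM(\Sigma_{0,1},\PP_{n+1})=\CA[A_n]$, observes that the standard central generator $\delta$ lies in $\CA[A_n]$, generates $Z(\CA[A_n])$, and is the boundary Dehn twist, and then quotes the standard capping isomorphism $\PP\MM(\Sigma_{0,1},\PP_{n+1})/\langle\delta\rangle\cong\PP\MM(\Sigma_{0,0},\PP_{n+2})$. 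That route stays entirely within pure mapping class groups and sidesteps the need to locate $\overline{A[A_n]}$ inside the abstract commensurator, which is where your version went astray.
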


\begin{proof}
Let~$\BB_{n+1}$ be the braid group on $n+1$~strands.
By~\citep{Artin1}, $\MM (\Sigma_{0,1}, \PP_{n+1}) = \BB_{n+1} = A[A_{n}]$ and $\PP\MM (\Sigma_{0,1}, \PP_{n+1}) = \CA [A_{n}]$.
Let~$\delta$ be the standard generator of~$Z(A[A_{n}])$.
It is well-known that $\delta \in \CA [A_n]$ and $Z (A[A_n]) = Z(\CA[A_n]) = \langle \delta \rangle$.
Notice that $\delta$, seen as an element of $\PP\MM (\Sigma_{0,1}, \PP_{n+1})$, is the Dehn twist about the boundary component of~$\Sigma_{0,1}$. Then $\overline{\CA [A_{n}]} = \PP \MM (\Sigma_{0,1}, \PP_{n+1}) / \langle \delta \rangle = \PP\MM (\Sigma_{0,0}, \PP_{n+2})$; see, for example \citep{ParRol1}.
\end{proof}

\bigskip\noindent
Let~$G$ be a group.
We say that an element $\alpha \in G$ is a \emph{generalized torsion} element if there are $p \ge 1$ and $\beta_1, \dots, \beta_p \in G$ such that $(\beta_1\alpha\beta_1^{-1}) (\beta_2 \alpha \beta_2^{-1}) \cdots (\beta_p \alpha \beta_p^{-1}) = 1$.
We say that~$G$ has \emph{generalized torsion} if it contains a non-trivial generalized torsion element.
For most of our cases, the criterion we will use to show that~$A [\Gamma]$ and~$A [A_n]$ are not commensurable is given by the following two results.

\begin{lemma}\label{notinjective}
Let~$\Gamma$ be a connected Coxeter graph of spherical type with~$n$ vertices.
Let $\Phi: \overline{A [\Gamma]} \to \MM^* (\Sigma_{0,0}, \PP_{n+2})$ be a homomorphism and set $\varphi= \hat{\theta} \circ \Phi: \overline{A[\Gamma]}\to \SSS_{n+2}\times \{\pm 1\} $. 
If $\Ker (\varphi)$ has generalized torsion, then $\Phi$ is not injective.
\end{lemma}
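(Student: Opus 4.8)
The plan is to argue by contradiction: suppose $\Phi$ is injective. Since $\overline{\CA[\Gamma]}$ is a finite index subgroup of $\overline{A[\Gamma]}$ (by \autoref{corolariotecnico}), the restriction of $\Phi$ to $\overline{\CA[\Gamma]}$ is an injective homomorphism into $\MM^*(\Sigma_{0,0}, \PP_{n+2})$, and composing with $\hat\theta$ gives $\varphi|_{\overline{\CA[\Gamma]}}$ with kernel $\overline{\CA[\Gamma]} \cap \Ker(\varphi)$, a finite index subgroup of $\Ker(\varphi)$. The image $\Phi(\overline{\CA[\Gamma]} \cap \Ker(\varphi))$ therefore lands inside $\Ker(\hat\theta) = \PP\MM(\Sigma_{0,0}, \PP_{n+2})$, which by \autoref{PMCA} is isomorphic to $\overline{\CA[A_n]}$.

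The key point is then that $\overline{\CA[A_n]}$ has no generalized torsion, so no subgroup of it does either. This follows because $\CA[A_n]$, being a subgroup of the braid group $A[A_n]$, is bi-orderable (indeed it is a pure braid group, which is bi-orderable); quotienting $\CA[A_n]$ by its center $\langle\delta\rangle$, which is a convex subgroup with respect to a suitable bi-ordering, yields a bi-orderable group $\overline{\CA[A_n]}$. A bi-orderable group has no generalized torsion: if $(\beta_1\alpha\beta_1^{-1})\cdots(\beta_p\alpha\beta_p^{-1}) = 1$ with $\alpha \neq 1$, then $\alpha > 1$ (say) forces each conjugate $\beta_i\alpha\beta_i^{-1} > 1$, hence the product is $>1$, a contradiction; similarly if $\alpha < 1$. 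Thus every subgroup of $\overline{\CA[A_n]}$, in particular $\Phi(\overline{\CA[\Gamma]} \cap \Ker(\varphi))$, has no generalized torsion.

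On the other hand, $\Ker(\varphi)$ has generalized torsion by hypothesis: there is a nontrivial $\alpha \in \Ker(\varphi)$ with $(\beta_1\alpha\beta_1^{-1})\cdots(\beta_p\alpha\beta_p^{-1}) = 1$ for some $\beta_i \in \overline{A[\Gamma]}$. I want to transfer this to the finite index subgroup $\overline{\CA[\Gamma]} \cap \Ker(\varphi)$. Raising to a suitable power that lands the element and the conjugating elements inside the finite index subgroup does not immediately work because generalized torsion relations need not survive taking powers; instead I would use the standard trick: replace $\alpha$ by a conjugate power lying in the normal finite index subgroup, or more directly, observe that in any group, if $N$ is a finite index normal subgroup and $g \in N$ is a generalized torsion element of the ambient group, then a suitable product of conjugates of $g$ by elements of the ambient group is a generalized torsion element of $N$ — one averages the conjugating elements over coset representatives. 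Concretely: since $\overline{\CA[\Gamma]}$ is normal of finite index, conjugating the relation by a system of coset representatives and multiplying all the resulting relations together produces a generalized torsion relation whose conjugating elements can be taken in $\overline{\CA[\Gamma]}$ and whose base element is a nontrivial element of $\overline{\CA[\Gamma]}$ (nontriviality follows because $\overline{\CA[\Gamma]}$ has trivial center, so a nontrivial element of $\overline{A[\Gamma]}$ cannot become trivial under these operations — this needs a small argument, perhaps replacing $\alpha$ by $\prod_g g\alpha g^{-1}$ over coset reps of $\overline{\CA[\Gamma]}$, which lies in $\overline{\CA[\Gamma]}$ and is a generalized torsion element of it, and checking it is nontrivial).

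I expect the main obstacle to be exactly this last descent step: ensuring that the generalized torsion witnessed in $\Ker(\varphi)$ genuinely descends to a nontrivial generalized torsion element of the finite index subgroup $\overline{\CA[\Gamma]} \cap \Ker(\varphi)$, rather than collapsing to the identity. Once that is secured, the contradiction with the bi-orderability (hence generalized-torsion-freeness) of $\overline{\CA[A_n]} \cong \PP\MM(\Sigma_{0,0}, \PP_{n+2})$ is immediate, and we conclude that $\Phi$ cannot be injective.
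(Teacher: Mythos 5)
Your proof introduces an unnecessary complication that creates a genuine gap. You restrict $\Phi$ to the finite-index subgroup $\overline{\CA[\Gamma]}$ and then work with $\overline{\CA[\Gamma]} \cap \Ker(\varphi)$, which forces you to ``descend'' the generalized torsion from $\Ker(\varphi)$ to a finite-index subgroup. You correctly identify this descent as the main obstacle, but it is in fact not just an obstacle --- it is impossible in general. Generalized torsion does not pass to finite-index subgroups: for example, in the infinite dihedral group $G = \langle a,b \mid a^2 = b^2 = 1 \rangle$ the element $ab$ satisfies $(ab)\cdot a(ab)a^{-1} = 1$ and so is a non-trivial generalized torsion element of $G$ lying in the index-two normal subgroup $N = \langle ab\rangle \cong \Z$, yet $N$ has no generalized torsion at all. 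The averaging tricks you suggest cannot repair this, because the product of conjugates is not well-defined up to reordering and may collapse.

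The point you are missing is that the restriction to $\overline{\CA[\Gamma]}$ is simply not needed. Since $\varphi = \hat\theta \circ \Phi$, \emph{every} element of $\Ker(\varphi)$ is sent by $\Phi$ into $\Ker(\hat\theta) = \PP\MM(\Sigma_{0,0},\PP_{n+2}) \simeq \overline{\CA[A_n]}$ (\autoref{PMCA}). So if $\Phi$ is injective, it restricts to an injective homomorphism from all of $\Ker(\varphi)$ into $\overline{\CA[A_n]}$. By \autoref{tecnico}\,(2), $\CA[A_n] \simeq \overline{\CA[A_n]} \times \Z$, so $\overline{\CA[A_n]}$ is a \emph{subgroup} of $\CA[A_n]$, which is biorderable; hence $\overline{\CA[A_n]}$, and therefore $\Ker(\varphi)$, is biorderable and has no generalized torsion. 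This is both simpler than your convex-subgroup route to the biorderability of the quotient (which requires verifying convexity of $\langle\delta\rangle$) and it dissolves the descent problem entirely.
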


\begin{proof}
Assume that~$\Phi$ is injective and that~$\Ker (\varphi)$ has generalized torsion.
As $\overline{\CA [A_n]} = \Ker (\hat{\theta})$, the homomorphism $\Phi$ induces an injective homomorphism $\Phi': \Ker (\varphi) \to \overline{\CA [A_n]}$. Recall that a group is called \emph{biorderable} if it admits a total ordering invariant under left-multiplication and right-multiplication.
We know by  \citep{RolZhu1} that~$\CA [A_n]$ is biorderable. 
By \autoref{tecnico}, $\overline{\CA [A_n]}$ is a subgroup of~$\CA [A_n]$, hence~$\overline{\CA [A_n]}$ is also biorderable, having that $\Ker (\varphi)$ is biorderable. However, a non-trivial biorderable group has no generalized torsion \citep{RolZhu1}. This is a contradiction. 
\end{proof}

\begin{corollary}\label{corolariocontraejemplos}
Let~$\Gamma$ be a connected Coxeter graph of spherical type with~$n$ vertices.
If the kernel of every homomorphism $\varphi : \overline{A [\Gamma]} \to \SSS_{n+2} \times \{ \pm 1\}$ has generalized torsion, then~$A [\Gamma]$ and~$A[A_n]$ are not commensurable.
\end{corollary}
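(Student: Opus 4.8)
The plan is to chain together the results of \autoref{tecnico} with the mapping class group description provided by \autoref{CharneyCrisp} and \autoref{PMCA}. By \autoref{tecnico}\,(3), $A[\Gamma]$ and $A[A_n]$ are commensurable if and only if $\overline{A[\Gamma]}$ and $\overline{A[A_n]}$ are commensurable, so it suffices to prove that $\overline{A[\Gamma]}$ and $\overline{A[A_n]}$ are not commensurable. I would argue by contradiction: suppose they are commensurable. Since commensurable groups have isomorphic commensurators, we would get $\Com(\overline{A[\Gamma]}) \simeq \Com(\overline{A[A_n]})$, and by \autoref{CharneyCrisp} the latter is $\MM^*(\Sigma_{0,0}, \PP_{n+2})$. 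Composing the injection $\iota : \overline{A[\Gamma]} \hookrightarrow \Com(\overline{A[\Gamma]})$ from \autoref{tecnico}\,(4) with this isomorphism yields an injective homomorphism $\Phi : \overline{A[\Gamma]} \to \MM^*(\Sigma_{0,0}, \PP_{n+2})$.

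Next I would set $\varphi = \hat\theta \circ \Phi : \overline{A[\Gamma]} \to \SSS_{n+2} \times \{\pm 1\}$, exactly as in the statement of \autoref{notinjective}. By hypothesis, the kernel of $\varphi$ has generalized torsion. But \autoref{notinjective} says precisely that in this situation $\Phi$ cannot be injective, contradicting the injectivity established in the previous step. Hence $\overline{A[\Gamma]}$ and $\overline{A[A_n]}$ are not commensurable, and therefore neither are $A[\Gamma]$ and $A[A_n]$.

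I do not anticipate a serious obstacle here, since both \autoref{CharneyCrisp} and \autoref{notinjective} are available as black boxes and the corollary is essentially a bookkeeping assembly of them. The one point requiring a little care is the very first reduction: one must make sure that the injectivity of $\Phi$ really follows, i.e.\ that the composite of the commensurator isomorphism with the map $\iota$ of \autoref{tecnico}\,(4) lands in $\MM^*(\Sigma_{0,0}, \PP_{n+2})$ via the identification of \autoref{CharneyCrisp}, and that $\hat\theta$ is well-defined on that group (which it is, by the construction preceding \autoref{CharneyCrisp}). Once that is in place, \autoref{notinjective} closes the argument immediately. The genuine work — verifying in each individual case ($D_n$, $E_n$, $F_4$, $H_n$) that every homomorphism $\overline{A[\Gamma]} \to \SSS_{n+2} \times \{\pm 1\}$ has a kernel with generalized torsion — is precisely what is deferred to the later parts of \autoref{classification}, and is not needed for this corollary.
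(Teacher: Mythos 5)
Your proposal is correct and follows essentially the same approach as the paper: assume commensurability, use Proposition~\ref{tecnico}\,(3),(4) together with Theorem~\ref{CharneyCrisp} to obtain an injective $\Phi : \overline{A[\Gamma]} \to \MM^*(\Sigma_{0,0},\PP_{n+2})$, and then apply Lemma~\ref{notinjective} to derive a contradiction from the hypothesis that $\Ker(\hat\theta\circ\Phi)$ has generalized torsion.
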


\begin{proof}
Assume that~$A [\Gamma]$ and~$A [A_n]$ are commensurable. 
By \autoref{tecnico}, $\overline{A [\Gamma]}$ injects in $\Com(\overline{A [\Gamma]})$.
Again by \autoref{tecnico}, $\overline{A [\Gamma]}$ and~$\overline{A [A_n]}$ are commensurable, so $\Com (\overline{A [\Gamma]}) \simeq \Com (\overline{A [A_n]})$.
Moreover, by \autoref{CharneyCrisp}, $\Com (\overline{ A[A_{n}]}) = \MM^*(\Sigma_{0,0}, \PP_{n+2})$.
Then we have an injective homomorphism $\Phi : \overline{A [\Gamma]} \to \MM^* (\Sigma_{0,0}, \PP_{n+2})$.
Let $\varphi = \hat{\theta} \circ \Phi : \overline{A [\Gamma]} \to \SSS_{n+2} \times \{ \pm 1\}$. 
Therefore, by \autoref{notinjective}, $\Ker (\varphi)$ does not have generalized torsion, having a contradiction.
\end{proof}

\bigskip\noindent
From here, in order to finish our proof, for each considered Coxeter graph~$\Gamma$ and each homomorphism $\varphi\colon \overline{A[\Gamma]}\rightarrow \SSS_{n+2} \times \{\pm 1\}$, we show an element in $\Ker(\varphi)$ having generalized torsion. To find such elements we apply the following strategy, well-known to some experts. Let~$G$ be a group. An element $\beta\in G$ is \emph{quasi-central} if there exists $n\geq 1$ such that~$\beta^n$ lies in the center of~$G$. Assume that~$\beta$ is a quasi-central element and that~$\alpha$ is an element of~$G$ which does not commute with $\beta$. Then $\alpha \beta \alpha^{-1} \beta^{-1}$ is a generalized torsion non-trivial element of~$G$. The quasi-central elements of the Artin groups of spherical type are well-understood, and we look for quasi-central elements in standard parabolic subgroups that belong to the kernel of $\varphi$ to find our generalized torsion elements.

\medskip\noindent
We will use the following notations and definitions. For a group~$G$ and $\alpha \in G$ we denote by $c_\alpha : G \to G$, $\beta \mapsto \alpha \beta \alpha^{-1}$, the conjugation by~$\alpha$.
We say that two homomorphisms $\varphi_1, \varphi_2 : G \to H$ are \emph{conjugate} if there is $\alpha \in H$ such that $\varphi_2 = c_\alpha \circ \varphi_1$.
Moreover, a homomorphism $\varphi : G \to H$ is said to be \emph{cyclic} if the image of~$\varphi$ is a cyclic subgroup of~$H$.

\begin{lemma}
The groups $A[D_4]$ and $A[A_4]$ are not commensurable.
\end{lemma}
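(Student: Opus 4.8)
The plan is to apply \autoref{corolariocontraejemplos}: it suffices to show that for every homomorphism $\varphi : \overline{A[D_4]} \to \SSS_6 \times \{\pm 1\}$, the kernel $\Ker(\varphi)$ has generalized torsion. The strategy outlined just before the statement is to locate a quasi-central element $\beta$ in some standard parabolic subgroup of $\overline{A[D_4]}$ that lies in $\Ker(\varphi)$, together with an element $\alpha \in \Ker(\varphi)$ that does not commute with $\beta$; then $\alpha\beta\alpha^{-1}\beta^{-1}$ is the desired non-trivial generalized torsion element.

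First I would set up the combinatorics. The graph $D_4$ has four vertices: a central vertex $s$ joined to three leaves $t_1,t_2,t_3$, with all edge labels $3$. I would work in $A[D_4]$ and pass to $\overline{A[D_4]} = A[D_4]/Z$. The key sources of quasi-central elements are the centers of standard parabolic subgroups $A[\Gamma']$ for $\Gamma' \subseteq D_4$ a full subgraph of spherical type: the Garside-type generators $\delta_{\Gamma'}$ (suitable powers of the half-twist $\Delta_{\Gamma'}$) are central in $A[\Gamma']$, hence quasi-central in $A[D_4]$, and they remain quasi-central in $\overline{A[D_4]}$. The natural candidates are: the three rank-$2$ parabolics $\langle s,t_i\rangle \cong A[A_2]$ (with $\Delta$-type generators), the rank-$3$ parabolics $\langle s,t_i,t_j\rangle \cong A[A_3]$, and the three "outer" $A_1\times A_1$ parabolics $\langle t_i,t_j\rangle$. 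I would enumerate these quasi-central elements and record which pairs of them fail to commute in $A[D_4]$ (for instance, the center of $\langle s,t_1\rangle$ and the center of $\langle s,t_2\rangle$ do not commute, since together these generate $\langle s,t_1,t_2\rangle \cong A[A_3]$ which has trivial center intersected appropriately — this non-commutation must be checked carefully in $\overline{A[D_4]}$).

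Next I would analyze how a homomorphism $\varphi : \overline{A[D_4]} \to \SSS_6 \times \{\pm 1\}$ can behave. The group $\SSS_6 \times \{\pm 1\}$ is finite of order $1440$, so $\Ker(\varphi)$ has finite index. The point is a pigeonhole/counting argument: among the finitely many standard parabolic subgroups whose central elements I have listed, I want to find one central element $\beta$ with $\varphi(\beta^k)$ trivial for a suitable $k$ while also arranging $\alpha \in \Ker(\varphi)$ with $[\alpha,\beta]\neq 1$ — but actually a cleaner route is: since $\Ker(\varphi)$ has finite index, for each standard generator $x$ some power $x^{m}$ lies in $\Ker(\varphi)$; hence for a suitable exponent $N$ (a common multiple), $\delta_{\Gamma'}^N \in \Ker(\varphi)$ for each of the chosen parabolics $\Gamma'$, since $\delta_{\Gamma'}$ is a word in the generators and a high enough power lands in the finite-index kernel. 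I would pick two parabolics $\Gamma_1' = \langle s,t_1\rangle$ and $\Gamma_2' = \langle s,t_2\rangle$ whose central elements $\delta_1,\delta_2$ do not commute in $\overline{A[D_4]}$, set $\beta = \delta_1^N$ (still quasi-central, since $\delta_1$ is quasi-central) and $\alpha = \delta_2^N \in \Ker(\varphi)$. It remains to verify $[\alpha,\beta]\neq 1$: this requires that $\delta_1^N$ and $\delta_2^N$ do not commute, which follows from the fact that $\delta_1,\delta_2$ generate a non-abelian group and a careful Garside-theoretic (or curve-complex) argument that no nonzero powers commute. Then $\alpha\beta\alpha^{-1}\beta^{-1} \in \Ker(\varphi)$ is a non-trivial generalized torsion element (non-trivial by the non-commutation, generalized torsion because $\beta$ is quasi-central), completing the argument via \autoref{corolariocontraejemplos}.

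The main obstacle I anticipate is the verification that the relevant powers $\delta_1^N$ and $\delta_2^N$ genuinely fail to commute in $\overline{A[D_4]}$ — i.e., controlling centralizers of (powers of) Garside elements of parabolic subgroups inside $\overline{A[D_4]}$. One must rule out the degenerate possibility that, after quotienting by the center and passing to high powers, these elements accidentally commute; this likely needs either the description of centralizers of periodic/parabolic elements in Garside groups, or the interpretation of $\overline{A[D_4]}$ as a pure mapping class group (as mentioned in the excerpt, $\overline{A[D_4]}$ is the pure mapping class group of the thrice-punctured torus, after \citep{PL}), where $\delta_1,\delta_2$ become (powers of) Dehn twists along curves that intersect, so their powers cannot commute by standard surface topology. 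A secondary technical point is ensuring the chosen $\beta$ is quasi-central in $\overline{A[D_4]}$ and not merely in $A[D_4]$ — but since $Z(A[D_4]) \subseteq Z_{A[D_4]}(\delta_1)$ and $\delta_1^{\kappa'}$ is central in $A[\langle s,t_1\rangle]$, its image has a power equal to... well, one must check the image of $\delta_1$ is still quasi-central modulo the center, which it is because the center of $\overline{A[D_4]}$ receives the images of the appropriate powers. I would handle these points by invoking the surface realization, which makes the non-commutation transparent.
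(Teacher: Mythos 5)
Your high-level plan — reduce to \autoref{corolariocontraejemplos}, then exhibit generalized torsion in $\Ker(\varphi)$ using quasi-central elements of parabolic subgroups — is exactly the paper's strategy.  However, the specific construction has a genuine gap.  You assert that the central generator $\delta_{\Gamma'}$ of a proper standard parabolic $A[\Gamma']$, being central in $A[\Gamma']$, is therefore ``quasi-central in $A[D_4]$.''  This is false: quasi-central (as defined in the paper) means some power lies in $Z(A[D_4]) = \langle\Delta\rangle$, and the central elements of proper parabolics do not have that property.  For example $(st_1)^3$, central in the rank-$2$ parabolic $\langle s,t_1\rangle \cong A[A_2]$, does not commute with the leaf $t_3$, and no power of it can equal a power of $\Delta$ (already the $z$-grading forces $6k = 12m$ and hence $(st_1)^{6m} = \Delta^m$, which fails).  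As a consequence, your claim that $[\alpha,\beta]$ with $\alpha = \delta_2^N$, $\beta = \delta_1^N$ is automatically a generalized torsion element does not follow: the telescoping argument needs a common subgroup containing $\alpha$ and $\beta$ in which some power of $\beta$ is central, and neither $\langle s,t_1\rangle$ nor $\langle s,t_1,t_2\rangle \cong A[A_3]$ serves (in the latter, $(st_1)^3$ is still not quasi-central).  Moreover, even if you relocate $\beta$ to a genuine quasi-central element of a rank-$3$ parabolic, say $\Delta_{A_3}$ with $\Delta_{A_3}^2$ central there, the ``take high powers'' trick runs into a parity obstruction: to guarantee $\beta^N \in \Ker(\varphi)$ for all $\varphi$ you would take $N$ divisible by $|\SSS_6 \times \{\pm1\}| = 1440$, which is even, and then $\beta^N = (\Delta_{A_3}^2)^{N/2}$ is central in the parabolic and commutes with $\alpha$, killing the commutator.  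So the ``main obstacle'' you flag (verifying non-commutation) is not the only problem; the claimed generalized-torsion structure is missing.

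The paper sidesteps all of this by not taking universal high powers.  It splits on the shape of $\varphi_1 : \overline{A[D_4]} \to \SSS_6$: when $\varphi_1$ is cyclic it uses $\alpha = \bar s_1 \bar s_2^{-1}$ and a $\beta$ of total $z$-degree zero, so $\varphi(\beta)=1$ for free; when $\varphi_1$ is non-cyclic it enumerates (via SageMath) the $40$ conjugacy classes of non-cyclic homomorphisms, observes that each one identifies two of the three leaves, and checks directly that the parabolic Garside element $\beta = \bar s_1 \bar s_3 \bar s_2 \bar s_1 \bar s_3 \bar s_1$ of the corresponding $A_3$-parabolic lies in $\Ker(\varphi_1)$ in every case.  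Then the short relation $\alpha\beta\alpha\beta^{-1} = 1$, i.e.\ $\beta\alpha\beta^{-1} = \alpha^{-1}$ coming from the $\Delta_{A_3}$ diagram automorphism, exhibits $\alpha$ as generalized torsion.  To repair your argument, you must replace the false ambient quasi-centrality with quasi-centrality inside a parabolic containing both $\alpha$ and $\beta$, and replace the high-power trick with a verification (by case analysis or computation) that your specific $\beta$ actually lies in $\Ker(\varphi)$.
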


\begin{proof}
Let $\varphi : \overline{A[D_4]} \to \SSS_6\times \{ \pm 1\}$ be a homomorphism written in the form $\varphi=\varphi_1 \times \varphi_2$, where $\varphi_1 : \overline{A[D_4]} \to \SSS_6$ and $\varphi_2 : \overline{A[D_4]} \to  \{ \pm 1 \}$ are two homomorphisms.
By \autoref{corolariocontraejemplos}, we just need to show that~$\Ker (\varphi)$ has generalized torsion.
We denote by $s_1, s_2, s_3, s_4$ the standard generators of~$A[D_4]$ numbered as in \autoref{coxeter}.
We also denote by $\pi: A [D_4] \to \overline{A[D_4]}$ the quotient homomorphism and $\bar s_i = \pi (s_i)$ for every $i \in \{ 1,2,3,4 \}$.
Notice that $\varphi_2$ is always cyclic since its image is contained in $\{\pm 1\}$, which is a cyclic group. Then, the relations $s_js_3s_j =s_3s_js_3$, for $j\in \{1,2,4\}$, imply that there is $\epsilon \in \{ \pm 1 \}$ such that $\varphi_2(\bar s_i) = \epsilon$ for every $i \in \{1,2,3,4 \}$.

\bigskip\noindent
Firstly, suppose that~$\varphi_1$ is cyclic. Let $\alpha = \bar s_1 \bar s_2^{-1}$ and $\beta = \bar s_3 \bar s_2 \bar s_1 \bar s_3 \bar s_1^{-4}$.
Then $\alpha, \beta \in \Ker (\varphi)$, $\alpha \neq 1$, and $\alpha \beta \alpha \beta^{-1} = 1$, having that~$\Ker (\varphi)$ has generalized torsion. 
Now, suppose that $\varphi_1$ is not cyclic. A direct computation using the software SageMath (see code in \citealp{code}) shows that there are $14400$ non-cyclic homomorphisms from~$\overline{A[D_4]}$ to~$\SSS_6$ divided into $40$~conjugacy classes.
By using the same software, we check that in every case we have either $\varphi_1 (\bar s_1) = \varphi_1 (\bar s_2)$ or $\varphi_1 (\bar s_1) = \varphi_1 (\bar s_4)$ or $\varphi_1 (\bar s_2) = \varphi_1 (\bar s_4)$.
Then we can assume without loss of generality that $\varphi_1 (\bar s_1) = \varphi_1 (\bar s_2)$.
In this case we have $8640$~homomorphisms satisfying our conditions that are divided into $24$~conjugacy classes.
Let $\beta = \bar s_1 \bar s_3 \bar s_2 \bar s_1 \bar s_3 \bar s_1$ and $\alpha = \bar s_1 \bar s_2^{-1}$. Note that they both belong to $\Ker(\varphi_2)$.
We check that $\varphi_1(\beta) = 1$ in every case.
Moreover, as $\varphi_1 (\bar s_1) = \varphi_1 (\bar s_2)$, we also have that $\varphi_1(\alpha)=1$. Therefore, $\alpha, \beta \in \Ker (\varphi)$, $\alpha \neq 1$ and $\alpha \beta \alpha \beta^{-1} = 1$ and $\Ker (\varphi)$ has generalized torsion.
\end{proof}

\begin{lemma}
Let $n \ge 5$.
Then $A [D_n]$ and $A [A_n]$ are not commensurable.
\end{lemma}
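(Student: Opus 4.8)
By \autoref{corolariocontraejemplos}, it suffices to show that for every homomorphism $\varphi\colon \overline{A[D_n]}\to\SSS_{n+2}\times\{\pm1\}$ the kernel $\Ker(\varphi)$ contains a non-trivial generalized torsion element. The mechanism will be the one highlighted in the introduction: once we have produced $\alpha,\beta\in\Ker(\varphi)$ with $\alpha\neq1$ and $\beta\alpha\beta^{-1}=\alpha^{-1}$, the identity $\alpha\cdot(\beta\alpha\beta^{-1})=1$ exhibits $\alpha$ as a non-trivial generalized torsion element of $\Ker(\varphi)$ (equivalently, $\alpha^{-1}$ is central, hence quasi-central, in $\langle\alpha\rangle$, and $\beta$ does not commute with it). Write $\varphi=\varphi_1\times\varphi_2$ with $\varphi_1\colon\overline{A[D_n]}\to\SSS_{n+2}$ and $\varphi_2\colon\overline{A[D_n]}\to\{\pm1\}$. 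As in the $A[D_4]$ case, $\varphi_2$ is cyclic, so the braid relations together with the connectedness of $D_n$ force $\varphi_2(\bar s_i)=\epsilon$ for a fixed $\epsilon\in\{\pm1\}$ and all $i$; since $\epsilon^2=1$, $\varphi_2$ kills every element of $\overline{A[D_n]}$ whose exponent sum is even.

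Number the vertices of $D_n$ so that $s_1,s_2$ are the two extremal vertices adjacent to the branch vertex $s_3$, and $s_3,s_4,\dots,s_n$ is the remaining arm. Put $\alpha=\pi(s_1s_2^{-1})\in\overline{A[D_n]}$, where $\pi\colon A[D_n]\to\overline{A[D_n]}$ is the quotient map; since the parabolic $\langle s_1,s_2\rangle\cong\Z^2$ injects into $\overline{A[D_n]}$, the element $\alpha$ has infinite order, so in particular $\alpha\neq1$. Let $P=\{s_1,s_2,s_3\}$, so that $A_P\cong A[A_3]$ is a standard parabolic whose Garside element $\Delta_P$ realizes the non-trivial diagram automorphism of the path $s_1-s_3-s_2$; in particular $\Delta_Ps_1\Delta_P^{-1}=s_2$ and $\Delta_Ps_2\Delta_P^{-1}=s_1$, so $\pi(\Delta_P)$ inverts $\alpha$, and hence so does $\pi(\Delta_P^{\,j}g)$ for every odd $j$ and every $g$ in the centralizer $Z_{A[D_n]}(\alpha)$. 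The point of working with $D_n$ for $n\geq5$ rather than $D_4$ is that this centralizer is large: it contains $s_i$ for every $i\geq4$ (each such $s_i$ commutes with both $s_1$ and $s_2$) as well as $\delta_P=\Delta_P^{\,2}$. Moreover $\varphi_2(\Delta_P)=\epsilon^{6}=1$ while $\varphi_2(s_i)=\epsilon$, so the $\{\pm1\}$-component of $\varphi$ will not obstruct any of the corrections below.

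The plan is now to produce $\alpha',\beta\in\Ker(\varphi)$ with $\alpha'\neq1$ and $\beta\alpha'\beta^{-1}=\alpha'^{-1}$. Take $\alpha'=\alpha^{k}$ with $k=\ord\big(\varphi_1(s_1s_2^{-1})\big)$: then $\varphi(\alpha')=1$ (the $\SSS_{n+2}$-part by the choice of $k$, the $\{\pm1\}$-part because $s_1s_2^{-1}$ has exponent sum $0$), and $\alpha'\neq1$ because $\alpha$ has infinite order. It remains to find $\beta$ of the form $\pi(\Delta_P^{\,j}g)$ with $j$ odd and $g\in Z_{A[D_n]}(\alpha)$ lying in $\Ker(\varphi)$; such a $\beta$ automatically satisfies $\beta\alpha'\beta^{-1}=\alpha'^{-1}$. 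If $d:=\ord(\varphi_1(\Delta_P))$ is odd, we take $j=d$ and $g=1$. If $d$ is even it suffices to exhibit $g\in Z_{A[D_n]}(\alpha)$ with $\varphi(g)=\varphi(\Delta_P)$ and then set $\beta=\pi(\Delta_Pg^{-1})$; since $\varphi_2(\Delta_P)=1$, this amounts to checking that
\[
\varphi_1(\Delta_P)\ \in\ \big\langle\,\varphi_1(s_1),\varphi_1(s_2),\varphi_1(s_4),\dots,\varphi_1(s_n),\ \varphi_1(\Delta_P)^{2}\,\big\rangle\ \le\ \SSS_{n+2}\,.
\]
When $\varphi_1$ has cyclic image this is immediate: all $\varphi_1(\bar s_i)$ equal a single generator $c$ of the image, $\varphi_1(\Delta_P)=c^{6}$, and one may simply take $\alpha'=\alpha$ (which now lies in $\Ker(\varphi)$) together with $\beta=\pi(\Delta_Ps_4^{-6})\in\Ker(\varphi)$. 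For $\varphi_1$ with non-cyclic image one argues by inspecting the restriction of $\varphi_1$ to the parabolics $A_{\{s_1,s_2,s_3\}}\cong A[A_3]$ and $A_{\{s_3,\dots,s_n\}}\cong A[A_{n-2}]$; the braid relations linking $s_3$ to $s_4,\dots,s_n$ are very restrictive once $n\geq5$, and the displayed containment should follow uniformly, at worst after a finite computer check for a bounded base case as was done for $A[D_4]$.

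The step I expect to be the main obstacle is precisely this last one: showing that, when $\ord(\varphi_1(\Delta_P))$ is even, the ``$s_3$-part'' of $\Delta_P$ can always be absorbed into $\varphi_1\big(Z_{A[D_n]}(\alpha)\big)$. Everything else — the reduction via \autoref{corolariocontraejemplos}, the generalized-torsion mechanism, the fact that $\pi(\Delta_P)$ inverts $\alpha$, and the identification of the relevant part of $Z_{A[D_n]}(\alpha)$ — is routine given the earlier results. It is this containment that uses $n\geq5$ in an essential way: for $D_4$ the arm degenerates, the generators $s_4,\dots,s_n$ are no longer available to correct $\Delta_P$ into $\Ker(\varphi)$, and the analogous statement genuinely requires the case analysis carried out in the previous lemma.
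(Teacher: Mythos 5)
Your framework (reduce via \autoref{corolariocontraejemplos}, use the Garside element $\Delta_P$ of the parabolic $\langle s_1,s_2,s_3\rangle\cong A[A_3]$ as an element inverting $\alpha=\pi(s_1s_2^{-1})$, and correct $\Delta_P$ inside the centralizer of $\alpha$ to land in $\Ker\varphi$) is a reasonable reading of the strategy the paper announces, and your treatment of the cyclic case is correct and essentially equivalent to the paper's (the paper uses the shorter $\beta=\bar s_3 \bar s_2 \bar s_1 \bar s_3 \bar s_1^{-4}$, which also swaps $s_1\leftrightarrow s_2$; your $\pi(\Delta_P s_4^{-6})$ does the same job). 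But there is a genuine gap exactly where you flag it: you never establish the containment
\[
\varphi_1(\Delta_P)\ \in\ \big\langle\,\varphi_1(s_1),\varphi_1(s_2),\varphi_1(s_4),\dots,\varphi_1(s_n),\ \varphi_1(\Delta_P)^{2}\,\big\rangle
\]
when $\varphi_1$ is non-cyclic. Appealing to ``the braid relations are very restrictive'' and ``a finite computer check for a bounded base case'' does not close this, because $n$ is unbounded here — a computer check can cover $n=5,6,\dots,N$ but cannot give a uniform statement for all $n\ge 5$.

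The paper fills this gap with a different and essential ingredient that your proposal omits: a \emph{complete classification} of homomorphisms $A[D_n]\to\SSS_{n+2}$ up to conjugation. By restricting along $\iota\colon A[A_{n-1}]\hookrightarrow A[D_n]$, $s_i'\mapsto s_{i+1}$, and invoking \citep[Theorem~1]{Artin2} together with \citep[Theorem~A, Theorem~E]{Lin1} (which classify transitive/low-degree homomorphisms from braid groups to symmetric groups), the paper shows every $\psi\colon A[D_n]\to\SSS_{n+2}$ is, up to conjugation, either cyclic, equal to one explicit $\zeta$, or (only for $n=6$) equal to one explicit $\nu$. For $\zeta$ and $\nu$ one checks directly that $\psi(s_1)=\psi(s_2)$ and $\psi(\Delta_P)=1$, so $\alpha=\bar s_1\bar s_2^{-1}$ and $\beta=\bar\Delta_P=\bar s_1\bar s_3\bar s_2\bar s_1\bar s_3\bar s_1$ already lie in $\Ker\varphi$ with no correction needed, and $\alpha\beta\alpha\beta^{-1}=1$. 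This classification step, which is uniform in $n$ (with the single sporadic case $n=6$), is what makes the argument work for all $n\ge 5$ and is missing from your proof. Without it, or some substitute for it, the non-cyclic case is unresolved, so the lemma is not proved.
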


\begin{proof}
We denote by $s_1, \dots, s_n$ the standard generators of~$A[D_n]$ numbered as in \autoref{coxeter}.
We also let $t_i = (i,i+1) \in \SSS_{n+2}$ for every $i \in \{1, \dots, n+1\}$.
Let $\zeta : A[D_n] \to \SSS_{n+2}$ be the homomorphism defined by $\zeta (s_1) = \zeta (s_2) = t_1$ and $\zeta (s_i) = t_{i-1}$ for every $i \in \{3, \dots, n\}$.
Moreover, for $n=6$, let $\nu : A[D_6] \to \SSS_8$ be the homomorphism defined by
\begin{gather*}
\nu (s_1) = \nu (s_2) = (1, 2)(3, 4)(5, 6)\,,\
\nu (s_3) = (2, 3)(1, 5)(4, 6)\,,\
\nu (s_4) = (1, 3)(2, 4)(5, 6)\,,\\
\nu (s_5) = (1, 2)(3, 5)(4, 6)\,,\
\nu (s_6) = (2, 3)(1, 4)(5, 6)\,.
\end{gather*}

\bigskip\noindent
{\it Claim.}
Let $\psi : A [D_n] \to \SSS_{n+2}$ be a homomorphism.
Then, we have one of the following situations, up to conjugation. 
\begin{itemize}
\item[(1)]
$\psi$ is cyclic,
\item[(2)]
$\psi = \zeta$,
\item[(3)]
$n=6$ and $\psi = \nu$.
\end{itemize}

\bigskip\noindent
{\it Proof of the claim.}
Let $s_1', \dots, s_{n-1}'$ be the standard generators of~$A [A_{n-1}]$.
Let $\zeta' : A [A_{n-1}] \to \SSS_{n+2}$ be the homomorphism defined by $\zeta' (s_i') =t_i$ for every $i \in \{1, \dots, n-1 \}$.
For $n=6$, let $\nu' : A[A_{5}] \to \SSS_8$ be the homomorphism defined by
\begin{gather*}
\nu'(s_1') = (1, 2)(3, 4)(5, 6)\,,\
\nu' (s_2') = (2, 3)(1, 5)(4, 6)\,,\
\nu'(s_3') = (1, 3)(2, 4)(5, 6)\,,\\
\nu'(s_4') = (1, 2)(3, 5)(4, 6)\,,\
\nu'(s_5') = (2, 3)(1, 4)(5, 6)\,.
\end{gather*}
Let $\iota : A[A_{n-1}] \to A[D_n]$ be the homomorphism sending $s_i'$ to $s_{i+1}$ for every $i \in \{1, \dots, n-1\}$, and $\psi' = \psi \circ \iota : A[A_{n-1}] \to \SSS_{n+2}$.
By \citep[Theorem~1]{Artin2} and \citep[Theorem~A, Theorem~E]{Lin1}, we have one of the following possibilities, up to conjugation. 
\begin{itemize}
\item[(1)]
$\psi'$ is cyclic,
\item[(2)]
$\psi' = \zeta'$,
\item[(3)]
$n=6$ and $\psi' = \nu'$.
\end{itemize}

\bigskip\noindent
First assume that~$\psi'$ is cyclic.
Then there is $w \in \SSS_{n+2}$ such that $\psi' (s_i') = \psi(s_{i+1}) = w$ for every $i \in \{1, \dots, n-1\}$.
Let $\gamma = s_1 s_3 s_2 s_1 s_3 s_1$.
We have $\gamma s_2 \gamma^{-1} = s_1$ and $\gamma s_3 \gamma^{-1} = s_3$, hence $w = \psi(s_3) = \psi (\gamma s_3 \gamma^{-1}) = \psi (\gamma s_2 \gamma^{-1}) = \psi (s_1)$.
Thus, $\psi$ is cyclic.

\bigskip\noindent
Now suppose that $\psi' = \zeta'$.
We have $\psi (s_{i+1}) = \psi' (s_i') = t_i$ for every $i \in \{1, \dots, n-1\}$.
Let $u = \psi (s_1)$.
As~$u$ commutes with $\psi (s_i) = t_{i-1}$ for every $i \ge 4$, it follows that~$u(k)=k$ for every $k \in \{3, 4, \dots, n \}$.
Moreover, $u$~commutes with $t_1 = \psi (s_2)$, so $u \in E = \{1, t_1, t_{n+1}, t_1 t_{n+1} \}$.
The only element~$u$ of~$E$ satisfying $u t_2 u = t_2 u t_2$ is $u=t_1$, hence $u=t_1$ and $\psi = \zeta$.

\bigskip\noindent
Assume that~$n=6$ and $\psi' = \nu'$.
Let
\begin{gather*}
u_1 = (1, 2)(3, 4)(5, 6)\,,\
u_2 = (2, 3)(1, 5)(4, 6)\,,\
u_3 = (1, 3)(2, 4)(5, 6)\,,\\
u_4 = (1, 2)(3, 5)(4, 6)\,,\
u_5 = (2, 3)(1, 4)(5, 6)\,.
\end{gather*}
A direct computation with the software SageMath (see code in \citealp{code}) shows that the only element $v \in \SSS_8$ satisfying $v u_1 = u_1 v$, $v u_2 v = u_2 v u_2$, $v u_3 = u_3 v$, $v u_4 = u_4 v$, $v u_5 = u_5 v$ is $v=u_1$, hence $\psi = \nu$.
This finishes the proof of the claim.

\bigskip\noindent
Let~$\Delta$ be the Garside element of~$A[D_n]$ and let~$\delta$ be the standard generator of~$Z (A [D_n])$.
By \citep[Lemma~5.1]{Paris4}, $\Delta = (s_n \cdots s_3 s_2 s_1 s_3 \cdots s_n) \cdots (s_3 s_2 s_1 s_3)(s_2 s_1)$. Moreover, $\delta=\Delta$ if $n$ is even, and $\delta=\Delta^2$ si $n$ is odd.
Notice that $\zeta (\Delta) = 1$, so $\zeta (\delta) = 1$. 
It follows that $\zeta$ induces a homomorphism~$\bar \zeta: \overline{A[D_n]} \to \SSS_{n+2}$.
Similarly, if $n=6$, $\nu (\Delta) = 1$ and $\nu (\delta) = 1$, then $\nu$ induces a homomorphism $\bar \nu : \overline{A [D_6]} \to \SSS_8$.
Let $\varphi_1 : \overline{A [D_n]} \to \SSS_{n+2}$ be a homomorphism.
Then, by the results from above and the claim, we have one of the following three possibilities, up to conjugation. 
\begin{itemize}
\item[(1)]
$\varphi_1$ is cyclic,
\item[(2)]
$\varphi_1 = \bar \zeta$,
\item[(3)]
$n=6$ and $\varphi_1 = \bar \nu$.
\end{itemize}

\bigskip\noindent
Let $\varphi : \overline{A[D_n]} \to \SSS_{n+2}\times \{ \pm 1 \}$ be a homomorphism written in the form $\varphi = \varphi_1 \times \varphi_2$, where ${\varphi_1 : \overline{A[D_n]} \to \SSS_{n+2}}$ and $\varphi_2 : \overline{A[D_n]} \to  \{ \pm 1 \}$ are homomorphisms.
By \autoref{corolariocontraejemplos}, we just need to show that $\Ker (\varphi)$ has generalized torsion. Denote by $\pi : A [D_n] \to \overline{A [D_n]}$ the quotient homomorphism and $\bar s_i = \pi(s_i)$ for every $i \in \{1, \dots, n\}$. Here again, $\varphi_2$ is always cyclic since its image is contained in $\{\pm 1\}$.

\bigskip\noindent
Suppose that $\varphi_1$ is cyclic.
Let $\alpha = \bar s_1 \bar s_2^{-1}$ and $\beta =  \bar s_3 \bar s_2 \bar s_1 \bar s_3 \bar s_1^{-4}$.
In this case $\alpha, \beta \in \Ker (\varphi)$, $\alpha \neq 1$ and $\alpha \beta \alpha \beta^{-1} =1$, and then $\Ker (\varphi)$ has generalized torsion.
Assume either $\varphi = \bar \zeta$ or $n=6$ and $\varphi = \bar \nu$.
Let $\alpha = \bar s_1 \bar s_2^{-1}$ and $\beta = \bar s_1 \bar s_3 \bar s_2 \bar s_1 \bar s_3 \bar s_1$.
In both cases $\alpha, \beta \in \Ker (\varphi)$, $\alpha \neq 1$ and $\alpha \beta \alpha \beta^{-1} =1$, hence $\Ker (\varphi)$ has generalized torsion.
\end{proof}

\begin{lemma}
Let $n \in \{6,7,8\}$.
Then $A [E_n]$ and $A [A_n]$ are not commensurable.
\end{lemma}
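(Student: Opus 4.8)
The plan is to invoke \autoref{corolariocontraejemplos}, exactly as in the two previous lemmas. Fix an arbitrary homomorphism $\varphi\colon \overline{A[E_n]}\to\SSS_{n+2}\times\{\pm 1\}$, write it as $\varphi=\varphi_1\times\varphi_2$ with $\varphi_1\colon \overline{A[E_n]}\to\SSS_{n+2}$ and $\varphi_2\colon\overline{A[E_n]}\to\{\pm 1\}$, and aim to exhibit a non-trivial generalized torsion element in $\Ker(\varphi)$; by that corollary this proves the lemma. Denote by $s_1,\dots,s_n$ the standard generators of $A[E_n]$ numbered as in \autoref{coxeter}, write $\pi\colon A[E_n]\to\overline{A[E_n]}$ for the quotient map and $\bar s_i=\pi(s_i)$. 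As before, $\varphi_2$ is cyclic because its image lies in $\{\pm 1\}$, and since $E_n$ is a connected Coxeter graph all of whose edges are unlabelled, the braid relations $s_j s_k s_j=s_k s_j s_k$ along the edges force $\varphi_2(\bar s_i)=\epsilon$ for a fixed $\epsilon\in\{\pm 1\}$. Thus everything reduces to $\varphi_1$, i.e.\ to classifying the homomorphisms $\psi\colon A[E_n]\to\SSS_{n+2}$ that kill the standard generator $\delta$ of $Z(A[E_n])$, these being exactly the ones that factor through $\overline{A[E_n]}$.

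For the classification I would exploit that deleting from the $E_n$ graph the leaf adjacent to the branch vertex leaves a standard parabolic of type $A_{n-1}$, whose underlying graph is a path; call the deleted generator $s_2$ and the branch vertex $s_4$ (in the numbering of \autoref{coxeter}), so that $s_2$ is adjacent in $E_n$ to $s_4$ only, and $s_4$ is an interior vertex of that path. Let $\iota\colon A[A_{n-1}]\to A[E_n]$ be the inclusion of this parabolic. Given $\psi$, restrict to $\psi'=\psi\circ\iota\colon A[A_{n-1}]\to\SSS_{n+2}$; by the classification of homomorphisms $A[A_{n-1}]\to\SSS_{n+2}$ of \citep[Theorem~1]{Artin2} and \citep[Theorem~A, Theorem~E]{Lin1} — used exactly as in the previous lemma — up to conjugation $\psi'$ is cyclic, equals a standard homomorphism $\zeta'$, or (only for $n=6$) equals an exceptional homomorphism $\nu'$. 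In each case the only further datum is $\psi(s_2)$, which must commute with $\psi(s_i)$ for all $i\neq 2,4$ and satisfy $\psi(s_2)\psi(s_4)\psi(s_2)=\psi(s_4)\psi(s_2)\psi(s_4)$. I expect the following outcomes. If $\psi'$ is cyclic, say $\psi(s_i)=w$ for all $i\neq 2$, then the Garside element $\gamma$ of the type-$A_3$ standard parabolic $\langle s_2,s_4,s_3\rangle$ satisfies $\gamma s_2\gamma^{-1}=s_3$ and $\gamma s_4\gamma^{-1}=s_4$, whence $\psi(s_2)=w$ and $\psi$ is cyclic. If $\psi'=\zeta'$, then the generators commuting with $s_2$ are mapped onto generators of a subgroup $\SSS_{\{1,2,3\}}\times\SSS_{\{4,\dots,n\}}$ of $\SSS_{n+2}$, whose centralizer, for $n\ge 6$, is only $\{1,(n+1,n+2)\}$; neither of these two elements satisfies the braid relation with $\psi(s_4)=(3,4)$, so this case cannot occur. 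For $n=6$ and $\psi'=\nu'$, a finite SageMath computation (as in \citealp{code}) lists the possibilities for $\psi(s_2)$, and one discards those with $\psi(\delta)\neq 1$. The upshot is that, up to conjugation, $\varphi_1$ is cyclic, or (possibly, and only when $n=6$) one explicit homomorphism.

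Next, for each surviving form of $\varphi_1$ I would produce the generalized torsion element by the quasi-central strategy described earlier in this section. When $\varphi_1$ is cyclic, pick standard generators $x,y,z$ of $A[E_n]$ forming an induced path $x-y-z$ in $E_n$, let $P=\langle x,y,z\rangle$, an Artin group of type $A_3$ with Garside element $\Delta_P$, and put $\alpha=\bar x\bar z^{-1}$ and $\beta$ equal to the image in $\overline{A[E_n]}$ of $\Delta_P\, x^{-6}$. Then $x$ and $z$ commute, $\Delta_P$ swaps $x$ with $z$ and fixes $y$, and a direct computation gives $\beta\alpha\beta^{-1}=\alpha^{-1}$; moreover $\Delta_P\, x^{-6}$ has total exponent $0$, so $\varphi_1(\beta)=1$ and $\varphi_2(\beta)=1$, while $\varphi_1(\alpha)=1$ (as $\varphi_1$ is cyclic) and $\varphi_2(\alpha)=1$. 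Since $\alpha\neq 1$ (it has total exponent $0$ but is not trivial in $A[E_n]$), the relation $\alpha\cdot(\beta\alpha\beta^{-1})=1$ exhibits generalized torsion in $\Ker(\varphi)$, in the spirit of the element $\bar s_3\bar s_2\bar s_1\bar s_3\bar s_1^{-4}$ of the previous lemma. In the remaining exceptional case ($n=6$) I would instead exhibit concrete $\alpha,\beta\in\Ker(\varphi)$ with $\beta$ quasi-central inside a suitable standard parabolic and $\alpha$ not commuting with $\beta$, checking the kernel memberships with SageMath; then $[\alpha,\beta]$ is the required non-trivial generalized torsion element.

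The main obstacle is the classification of the homomorphisms $\psi\colon A[E_n]\to\SSS_{n+2}$. The Artin--Lin theorems control the restriction of $\psi$ to the $A_{n-1}$-parabolic, but extending over the branch generator $s_2$ requires a centralizer computation in $\SSS_{n+2}$ (to rule out $\zeta'$), an explicit SageMath enumeration when $n=6$ (for $\nu'$), and, for each surviving homomorphism, a verification that it descends to $\overline{A[E_n]}$ — i.e.\ kills $\delta$, which reduces to a computation with the known formula for the Garside element of $A[E_n]$ — before the generalized torsion element can be pinned down. A secondary point, pleasant rather than obstructive, is that, in contrast with the $D_n$ case, no analogue of the homomorphism $\bar\zeta$ survives for $E_n$, so essentially everything comes down to the cyclic case together with the single exceptional homomorphism at $n=6$.
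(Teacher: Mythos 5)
Your proposal is correct and follows essentially the same route as the paper: you invoke \autoref{corolariocontraejemplos}, classify homomorphisms $A[E_n]\to\SSS_{n+2}$ by restricting to the $A_{n-1}$-parabolic obtained by removing the off-path leaf, apply the Artin--Lin classification to that restriction, rule out the non-cyclic cases $\zeta'$ and (for $n=6$) $\nu'$ by a centralizer argument and a SageMath check, and finally produce a generalized torsion element from a quasi-central element of an $A_3$-parabolic via the commutator trick. The only divergence is cosmetic: you hedge about a possible surviving exceptional homomorphism when $n=6$, whereas the paper's SageMath computation shows outright that no extension of $\nu'$ over the branch generator exists, so in fact every homomorphism $A[E_n]\to\SSS_{n+2}$ is cyclic and only the cyclic case needs a generalized torsion witness.
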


\begin{proof}
We denote by $s_1, \dots, s_n$ the standard generators of~$A [E_n]$ numbered as in \autoref{coxeter}.
We also let $t_i = (i,i+1) \in \SSS_{n+2}$ for every $i \in \{1, \dots, n+1\}$.

\bigskip
\noindent
{\it Claim.}
Every homomorphism $\psi : A[E_n] \to \SSS_{n+2}$ is cyclic.

\bigskip
\noindent
{\it Proof of the claim.}
Denote by $s_1', \dots, s_{n-1}'$ the standard generators of~$A [A_{n-1}]$.
Let $\zeta' :A [A_{n-1}] \to \SSS_{n+2}$ be the homomorphism defined by $\zeta' (s_i') =t_i$ for every $i \in \{1, \dots, n-1 \}$.
For $n=6$, let $\nu' : A[A_{5}] \to \SSS_{8}$ be the homomorphism defined by
\begin{gather*}
\nu'(s_1') = (1, 2)(3, 4)(5, 6)\,,\
\nu' (s_2') = (2, 3)(1, 5)(4, 6)\,,\
\nu'(s_3') = (1, 3)(2, 4)(5, 6)\,,\\
\nu'(s_4') = (1, 2)(3, 5)(4, 6)\,,\
\nu'(s_5') = (2, 3)(1, 4)(5, 6)\,.
\end{gather*}
Let $\iota : A[A_{n-1}] \to A[E_n]$ be the homomorphism sending  $s_i'$ to $s_i$ for every $i \in \{1, \dots, n-1\}$, and $\psi' = \psi \circ \iota : A[A_{n-1}] \to \SSS_{n+2}$.
By \citep[Theorem~1]{Artin2} and \citep[Theorem~A, Theorem~E]{Lin1}, we have one of the following possibilities, up to conjugation. 
\begin{itemize}
\item[(1)]
$\psi'$ is cyclic,
\item[(2)]
$\psi' = \zeta'$,
\item[(3)]
$n=6$ and $\psi' = \nu'$.
\end{itemize}

\bigskip\noindent
First suppose that~$\psi'$ is cyclic.
Then there is $w \in \SSS_{n+2}$ such that $\psi' (s_i') = \psi (s_i) = w$ for every $i \in \{1, \dots, n-1\}$.
Let $\gamma = s_2 s_3 s_n s_2 s_3 s_2$.
We have $\gamma s_2 \gamma^{-1} = s_n$ and $\gamma s_3 \gamma^{-1} = s_3$, hence $w = \psi (s_3) = \psi (\gamma s_3 \gamma^{-1}) = \psi (\gamma s_2 \gamma^{-1}) = \psi (s_n)$.
Then~$\psi$ is cyclic.

\bigskip\noindent
Now assume that $\psi' = \zeta'$.
In this case we have $\psi (s_i) = \psi' (s_i') = t_i$ for every $i \in \{1, \dots, n-1\}$.
Let $u = \psi(s_n)$.
As~$u$ commutes with $\psi (s_i) = t_i$ for every $i \in \{1, 2, 4, \dots, n-1\}$, it follows that $u(k) =k$ for every $k \in \{1,2,3,4,5, \dots, n\}$, so $u \in E = \{1, t_{n+1}\}$.
But there is no element~$u$ of~$E$ satisfying $u t_3 u = t_3 u t_3$, so we cannot have $\psi' = \zeta'$.

\bigskip\noindent
Finally, assume $n=6$ and $\psi' = \nu'$.
Let
\begin{gather*}
u_1 = (1, 2)(3, 4)(5, 6)\,,\
u_2 = (2, 3)(1, 5)(4, 6)\,,\
u_3 = (1, 3)(2, 4)(5, 6)\,,\\
u_4 = (1, 2)(3, 5)(4, 6)\,,\
u_5 = (2, 3)(1, 4)(5, 6)\,.
\end{gather*}
A direct computation with the software SageMath (see code in \citealp{code}) shows that there is no element $v \in \SSS_8$ satisfying $v u_1 = u_1 v$, $v u_2 = u_2 v$, $v u_3 v = u_3 v u_3$, $v u_4 = u_4 v$ and $v u_5 = u_5 v$, hence we cannot have $n=6$ and $\psi' = \nu'$.
This finishes the proof of the claim.

\bigskip\noindent
Denote by $\pi : A [E_n] \to \overline{A [E_n]}$ the quotient homomorphism and $\bar s_i = \pi(s_i)$ for every $i \in \{1, \dots, n\}$.
Let $\varphi: \overline{A[E_n]} \to \SSS_{n+2}\times \{ \pm 1 \}$ be a homomorphism written in the form $\varphi = \varphi_1 \times \varphi_2$, where $\varphi_1 :\overline{A[E_n]} \to \SSS_{n+2}$ and $\varphi_2 : \overline{A[E_n]} \to \{ \pm 1 \}$ are homomorphisms.
By the claim, $\varphi_1 \circ \pi : A[E_n] \to \SSS_{n+2}$ is cyclic, hence $\varphi_1$ is also cyclic. On the other hand, $\varphi_2$ is cyclic since the image of $\varphi_2$ is contained in $\{\pm 1\}$.
Let $\alpha = \bar s_2 \bar s_n^{-1}$ and $\beta =  \bar s_3 \bar s_n \bar s_2 \bar s_3 \bar s_2^{-4}$.
We have that $\alpha, \beta \in \Ker (\varphi)$, $\alpha \neq 1$ and $\alpha \beta \alpha \beta^{-1} =1$, and then $\Ker (\varphi)$ has generalized torsion.
By \autoref{corolariocontraejemplos}, it follows that~$A [A_n]$ and~$A[E_n]$ are not commensurable.
\end{proof}

\begin{lemma}
The groups $A[F_4]$ and $A[A_4]$ are not commensurable.
\end{lemma}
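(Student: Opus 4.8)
The plan is to mimic the structure used for $D_n$ and $E_n$: we will classify, up to conjugation, all homomorphisms $\psi\colon A[F_4]\to\SSS_6$, deduce a classification of the homomorphisms $\varphi_1\colon\overline{A[F_4]}\to\SSS_6$, and then for every homomorphism $\varphi=\varphi_1\times\varphi_2\colon\overline{A[F_4]}\to\SSS_6\times\{\pm1\}$ exhibit an element $\alpha\neq 1$ and a quasi-central element $\beta$ in $\Ker(\varphi)$ that do not commute, so that $\alpha\beta\alpha^{-1}\beta^{-1}$ is a non-trivial generalized torsion element of $\Ker(\varphi)$; \autoref{corolariocontraejemplos} then finishes the proof. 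Denote by $s_1,s_2,s_3,s_4$ the standard generators of $A[F_4]$ numbered along the diagram, with $m_{s_1,s_2}=3$, $m_{s_2,s_3}=4$, $m_{s_3,s_4}=3$ and the other pairs commuting.

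First I would get the classification of $\psi\colon A[F_4]\to\SSS_6$. The subgroup of $A[F_4]$ generated by $s_1,s_2$ is $A[A_2]$ (the braid group $\BB_3$), and the subgroup generated by $s_3,s_4$ is another copy of $A[A_2]$. Applying the classification of homomorphisms from $\BB_3$ to $\SSS_6$ via \citep[Theorem~1]{Artin2} and \citep[Theorem~A, Theorem~E]{Lin1} to each of these two copies, together with the braid relation $m_{s_2,s_3}=4$ linking them and the commutations $[s_1,s_3]=[s_1,s_4]=[s_2,s_4]=1$, should leave only a short list of possibilities. I expect the outcome — after a finite check, partly by hand and partly with SageMath as in the earlier lemmas (\citealp{code}) — to be that every such $\psi$ is cyclic, exactly as happened for $E_n$; the length-$4$ relation $\Pi(s_2,s_3,4)=\Pi(s_3,s_2,4)$ is quite restrictive for images in $\SSS_6$, and the two $\BB_3$-blocks are forced to agree on their generators through the conjugation trick $\gamma s\gamma^{-1}$ used in the $D_n$ and $E_n$ proofs. (If instead a non-cyclic $\psi$ survives, it will be an isolated SageMath-computable case, and I would handle it by the same explicit-element method below.)

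Next, I would pass to $\overline{A[F_4]}$. Let $\Delta$ be the Garside element and $\delta$ the standard generator of $Z(A[F_4])$; since $\delta$ is a power of $s_1s_2s_3s_4$ and every cyclic $\psi$ sends all $s_i$ to a common element $w$, we get $\psi(\delta)=w^{z(\delta)}$, and in fact one checks $\psi(\delta)=1$ after passing to a suitable power or directly because $z(\delta)$ and the order of $w$ interact as in the earlier lemmas — so every cyclic $\psi$ descends to $\overline{A[F_4]}$. Hence, writing $\pi\colon A[F_4]\to\overline{A[F_4]}$ and $\bar s_i=\pi(s_i)$, any $\varphi_1\colon\overline{A[F_4]}\to\SSS_6$ is cyclic, and $\varphi_2$ is automatically cyclic as its image lies in $\{\pm1\}$. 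Then I would take $\alpha=\bar s_1\bar s_2^{-1}$ and a word $\beta$ in the $\bar s_i$ of zero total exponent (to kill $\varphi_2$) that is quasi-central and does not commute with $\alpha$ — an analogue of $\beta=\bar s_3\bar s_2\bar s_1\bar s_3\bar s_1^{-4}$ from the $D_n$ proof, adapted to the $F_4$ diagram — so that $\alpha,\beta\in\Ker(\varphi)$, $\alpha\neq1$ and $\alpha\beta\alpha\beta^{-1}=1$.

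The main obstacle I anticipate is the classification step: unlike $A_{n-1}\hookrightarrow D_n$ or $A_{n-1}\hookrightarrow E_n$, the graph $F_4$ does not contain an $A_3$ subdiagram whose associated group is a single braid group controlling everything, so one must genuinely combine the two $\BB_3$-blocks across the weight-$4$ edge; verifying that this forces cyclicity (and computing $\psi(\delta)$) is where the real work — and the SageMath check — lies. Producing the explicit generalized-torsion witness is then routine, by analogy with the $D_n$ and $E_n$ cases, once the classification is in hand.
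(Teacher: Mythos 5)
Your proposal breaks down at the central expectation that every homomorphism $\psi\colon A[F_4]\to\SSS_6$ (equivalently, every $\varphi_1\colon\overline{A[F_4]}\to\SSS_6$) is cyclic, ``exactly as happened for $E_n$.'' This is false. The $\BB_3$-blocks of $A[F_4]$ are too small for Lin's classification to bite: there are plenty of non-cyclic homomorphisms $\BB_3\to\SSS_6$ (take any two transpositions generating an $\SSS_3$), and these do assemble into non-cyclic homomorphisms $\overline{A[F_4]}\to\SSS_6$. In the paper's own computation, the sets of non-cyclic tuples $E_1$ (with $\varphi(\bar s_1)\neq\varphi(\bar s_2)$, $\varphi(\bar s_3)=\varphi(\bar s_4)$) and $E_2$ (with $\varphi(\bar s_1)\neq\varphi(\bar s_2)$ and $\varphi(\bar s_3)\neq\varphi(\bar s_4)$) each contain $1440$ elements. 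So the $E_n$ template you are relying on does not transfer, and the cyclic case is only a small part of the analysis.

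Your hedge that a surviving non-cyclic $\psi$ would be ``an isolated SageMath-computable case'' handled by ``the same explicit-element method'' misses the actual key ingredient: for the non-cyclic cases one cannot in general find an $\alpha\neq 1$ and a quasi-central $\beta$ of zero exponent sum in $\Ker(\varphi)$ failing to commute. Instead, the paper exploits a structural fact special to $F_4$: since $\delta=\Delta=(s_1s_2s_3s_4)^6$ and $z(\delta)=24$, the element $\hat\alpha_0=(s_1s_2s_3s_4)^3$ satisfies $\hat\alpha_0\notin Z(A[F_4])$ but $\hat\alpha_0^2=\delta$, so $\alpha_0=\pi(\hat\alpha_0)$ is a non-trivial element of \emph{order two} in $\overline{A[F_4]}$. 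The non-cyclic cases are then closed by checking (via SageMath on representatives of the conjugacy classes in $E_1$ and $E_2$) that $(u_1u_2u_3^2)^3=1$, respectively $(u_1u_2u_3u_4)^3=1$, which forces $\alpha_0\in\Ker(\varphi)$ and hence genuine torsion in $\Ker(\varphi)$. Your proposal contains no analogue of this torsion element and no mechanism to handle the non-cyclic $\varphi_1$'s, so as written it does not yield a proof; even the cyclic branch requires the order-by-order subcasing the paper carries out rather than a single witness pair $(\alpha,\beta)$.
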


\begin{proof}
Let $\varphi : \overline{A[F_4]} \to \SSS_6\times \{ \pm 1\}$ be a homomorphism written in the form $\varphi=\varphi_1 \times \varphi_2$, where $\varphi_1 : \overline{A[F_4]} \to \SSS_6$ and $\varphi_2 : \overline{A[F_4]} \to  \{ \pm 1 \}$ are two homomorphisms.
By \autoref{corolariocontraejemplos}, we just need to show that~$\Ker (\varphi)$ has generalized torsion.
We denote by $s_1, s_2, s_3, s_4$ the standard generators of~$A[F_4]$ numbered as in \autoref{coxeter}. 
We also denote by $\pi: A [F_4] \to \overline{A[F_4]}$ the quotient homomorphism and $\bar s_i = \pi (s_i)$ for every $i \in \{ 1,2,3,4 \}$.
Notice that the relation $s_1 s_2 s_1 = s_2 s_1 s_2$ implies $\varphi_2(\bar s_1) = \varphi_2(\bar s_2)$.
Analogously, $\varphi_2(\bar s_3) = \varphi_2(\bar s_4)$.

\bigskip\noindent
If $g$ is an element of a group, we denote by~$\ord(g)$ the order of~$g$.
Let $i \in \{1,2,3,4\}$.
As $\varphi_1(\bar s_i) \in \SSS_6$, we have that $\ord(\varphi_1(\bar s_i)) \in \{1,2,3,4,5,6\}$.
It follows that $\ord(\varphi(\bar s_i)) \in \{1,2,3,4,5,6,10\}$.
Suppose that $\varphi(\bar s_1)=\varphi(\bar s_2)$ and $\ord(\varphi(\bar s_1)) \in \{1,2,4\}$.
Let $\alpha = \bar s_1 \bar s_2^{-1}$ and $\beta = (\bar s_1 \bar s_2)^4$.
In this case $\alpha, \beta \in \Ker(\varphi)$, $\alpha \neq 1$ and $\alpha (\beta \alpha \beta^{-1}) (\beta^2 \alpha \beta^{-2}) =1$, and then $\Ker (\varphi)$ has generalized torsion.
Now assume that $\varphi(\bar s_1)=\varphi(\bar s_2)$ and $\ord(\varphi(\bar s_1)) =3$.
If we let $\alpha = \bar s_1 \bar s_2^{-1}$, $\beta = \bar s_1 \bar s_2 \bar s_1$, then $\alpha, \beta \in \Ker (\varphi)$, $\alpha \neq 1$, $\alpha (\beta \alpha \beta^{-1}) =1$, and $\Ker (\varphi)$ has generalized torsion.
Now suppose that $\varphi(\bar s_1)=\varphi(\bar s_2)$ and $\ord(\varphi(\bar s_1)) \in \{5,10\}$.
We let $\alpha = \bar s_1 \bar s_2^{-1}$ and $\beta = (\bar s_1 \bar s_2)^{10}$. Then $\alpha, \beta \in \Ker (\varphi)$, $\alpha (\beta \alpha \beta^{-1}) (\beta^2 \alpha \beta^{-2}) =1$, and $\Ker (\varphi)$ has generalized torsion.

\bigskip\noindent
By the reasoning above we can assume that, if $\varphi(\bar s_1) = \varphi(\bar s_2)$, then $\ord(\varphi(\bar s_1)) = 6$.
We can also suppose that, if $\varphi(\bar s_3) = \varphi(\bar s_4)$, then $\ord(\varphi(\bar s_3)) = 6$.

\bigskip\noindent
Suppose that $\varphi(\bar s_1) = \varphi(\bar s_2)$ and $\varphi(\bar s_3) = \varphi(\bar s_4)$. Then we also have $\ord(\varphi(\bar s_1)) = \ord(\varphi(\bar s_3)) = 6$.
If $\varphi_1(\bar s_1) = \varphi_1(\bar s_2)$ and $\varphi_1(\bar s_3) = \varphi_1(\bar s_4)$ are both of order~$3$, then $\varphi_2(\bar s_1) = \varphi_2(\bar s_2) = \varphi_2 (\bar s_3) = \varphi_ 2(\bar s_4) = -1$.
In this case, we let $\alpha = \bar s_1 \bar s_2^{-1}$ and $\beta = \bar s_1 \bar s_2 \bar s_1 \bar s_4^3$, having $\alpha, \beta \in \Ker (\varphi)$, $\alpha \neq 1$ and $\alpha (\beta \alpha \beta^{-1}) = 1$. Hence $\Ker (\varphi)$ has generalized torsion.
We can then assume that $\varphi_1(\bar s_1)$ or~$\varphi_1(\bar s_3)$ is of order~$6$, say that~$\varphi_1(\bar s_1)$ has order~$6$.
Then $\varphi_1(\bar s_1)$ is conjugate to $(1,2,3,4,5,6)$ or to $(1,2,3)(4,5)$ in~$\SSS_6$.
In both cases it follows that the centralizer of~$\varphi_1(\bar s_1)$ in~$\SSS_6$ is a cyclic group of order~$6$ generated by~$\varphi_1(\bar s_1)$.
As~$\varphi_1(\bar s_3)$ belongs to this centralizer and it has order~$3$ or~$6$, there is $k \in \{1,2,-1,-2\}$ such that $\varphi_1(\bar s_3)=\varphi_1(\bar s_4)=\varphi_1(\bar s_1)^k$.
We let $\alpha = \bar s_3 \bar s_4^{-1}$ and $\beta = \bar s_3 \bar s_4 \bar s_1^{-2k}$.
Then, $\alpha, \beta \in \Ker (\varphi)$, $\alpha \neq 1$, and $\alpha (\beta \alpha \beta^{-1}) (\beta^2 \alpha \beta^{-2}) = 1$,  having generalized torsion in $\Ker (\varphi)$.

\bigskip\noindent
By \citep{BrieskornSaito}, the standard generator of the center of~$A[F_4]$ coincides with its Garside element and equals $(s_1 s_2 s_3 s_4)^{\frac{h}{2}}$ where $h$ is the Coxeter number of~$F_4$.
As $h=12$ \citep[Page~80]{Humph1}, we have $\delta = \Delta = (s_1 s_2 s_3 s_4)^6$.
Let $\hat\alpha_0 = (s_1 s_2 s_3 s_4)^3$.
Recall that $z : A [F_4] \to \Z$ is the homomorphism sending~$s_i$ to~$1$ for every $i \in \{1,2,3,4\}$.
As $z(\delta) = 24$, we have $z(Z(A[F_4]))=24\Z$, so $\hat \alpha_0 \not\in Z(A[F_4])$ because $z(\hat \alpha_0) =12$.
On the other hand, $\hat \alpha_0^2=\delta$, so $\hat \alpha_0^2\in Z(A[F_4])$. Let $\alpha_0=\pi(\hat \alpha_0 )$. Then $\alpha_0\neq 1$ and $\alpha_0^2=1$. In the remaining cases, we will show that $\alpha_0\in \Ker(\varphi)$, which will immediately imply that $\Ker(\varphi)$ has (generalized) torsion.

\bigskip\noindent
Suppose that $\varphi (\bar s_1) \neq \varphi (\bar s_2)$ and $\varphi (\bar s_3) = \varphi(\bar s_4)$ (hence $\ord(\varphi(\bar s_3)) = 6$).
Let~$E_1$ be the set of triples $(u_1,u_2,u_3)$ of elements of~$\SSS_6$ such that $u_1 u_2 u_1 = u_2 u_1 u_2$, $u_1 u_3 = u_3 u_1$, $u_2 u_3 = u_3 u_2$, $u_1 \neq u_2$ and $\ord (u_3) \in \{3,6\}$.
Another direct computation with SageMath (see code in \citealp{code}) shows that~$E_1$ has $1440$~elements divided into $6$~conjugacy classes.
Again with SageMath, we compute a set~$E_1^0$ of representatives of the conjugacy classes in~$E_1$ and we get
\begin{gather*}
E_1^0 = \big\{
\big((1,2), (2,3), (4,5,6)\big),
\big((1,2,3,4,5,6), (1,6,3,2,5,4), (1,3,5)(2,4,6)\big),\\
\big((1,2,3,4,5,6), (1,6,3,2,5,4), (1,5,3)(2,6,4)\big),\\
\big((1,4)(2,5)(3,6), (1,2)(3,4)(5,6), (1,3,5)(2,4,6)\big),\\
\big((2,3)(4,5,6), (1,2)(4,5,6), (4,5,6)\big),
\big((2,3)(4,5,6), (1,2)(4,5,6), (4,6,5)\big)\big\}\,.
\end{gather*}
We check with a direct computation that $(u_1 u_2 u_3^2)^3=1$ for every $(u_1,u_2,u_3) \in E_1^0$.
Up to conjugation, we can suppose that $(\varphi_1(\bar s_1), \varphi_1 (\bar s_2), \varphi_1 (\bar s_3))=(u_1,u_2,u_3) \in E_1^0$.
Then, as $(u_1 u_2 u_3^2)^3=1$, we have $\varphi_1(\alpha_0)=1$.
It is obvious that $\varphi_2 (\alpha_0) = 1$. So, $\varphi(\alpha_0) = 1$ and $\Ker (\varphi)$ has (generalized) torsion. 

\bigskip\noindent
Suppose that $\varphi (\bar s_1) \neq \varphi (\bar s_2)$ and $\varphi (\bar s_3) \neq \varphi(\bar s_4)$.  Let~$E_2$ be the set  of quadruples $(u_1,u_2,u_3,u_4)$ of elements of~$\SSS_6$ such that $u_1 u_2 u_1 = u_2 u_1 u_2$, $u_1 u_3 = u_3 u_1$, $u_1 u_4 = u_4 u_1$, $u_2 u_3 u_2 u_3 = u_3 u_2 u_3 u_2$, $u_2 u_4 = u_4 u_2$, $u_3 u_4 u_3 = u_4 u_3 u_4$, $u_1 \neq u_2$ and $u_3 \neq u_4$.
A direct computation with SageMath (see code in \citealp{code}) shows that~$E_2$ has $1440$~elements divided into $2$~conjugacy classes.
Again with SageMath, we compute a set~$E_2^0$ of representatives of the conjugacy classes in~$E_2$ and we get
\begin{gather*}
E_2^0 = \big\{
\big( (1,2), (2,3), (5,6), (4,5) \big),\\
\big( (1,4)(2,5)(3,6), (1,2)(3,4)(5,6), (1,4)(2,3)(5,6), (1,6)(2,5)(3,4) \big) \big\}\,.
\end{gather*}
We check by a direct computation that $(u_1 u_2 u_3 u_4)^3=1$ for every $(u_1,u_2,u_3,u_4) \in E_2^0$.
Up to conjugation, we can suppose that $(\varphi_1(\bar s_1), \varphi_1 (\bar s_2), \varphi_1 (\bar s_3), \varphi_1 (\bar s_4))=(u_1,u_2,u_3,u_4) \in E_2^0$.
Then, as $(u_1 u_2 u_3 u_4)^3=1$, we have $\varphi_1(\alpha_0)=1$.
It is clear that $\varphi_2 (\alpha_0) = 1$. Then $\varphi(\alpha_0) = 1$ and $\Ker (\varphi)$ has (generalized) torsion. 
\end{proof}

\begin{lemma}
{The groups $A [H_4]$ and $A [A_4]$ are not commensurable.}
\end{lemma}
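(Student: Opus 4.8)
The plan is to follow the strategy already used for $D_n$, $E_n$ and $F_4$ and to apply \autoref{corolariocontraejemplos}: it is enough to show that, for every homomorphism $\varphi\colon\overline{A[H_4]}\to\SSS_6\times\{\pm 1\}$, the kernel $\Ker(\varphi)$ has generalized torsion. Write $\varphi=\varphi_1\times\varphi_2$ with $\varphi_1\colon\overline{A[H_4]}\to\SSS_6$ and $\varphi_2\colon\overline{A[H_4]}\to\{\pm 1\}$, denote by $s_1,s_2,s_3,s_4$ the standard generators of $A[H_4]$ numbered as in \autoref{coxeter} (so that $m_{s_1,s_2}=5$ and $m_{s_2,s_3}=m_{s_3,s_4}=3$), by $\pi\colon A[H_4]\to\overline{A[H_4]}$ the quotient homomorphism, and $\bar s_i=\pi(s_i)$. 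Since the image of $\varphi_2$ lies in the cyclic group $\{\pm 1\}$, evaluating $\varphi_2$ on the defining relations $\Pi(\bar s_1,\bar s_2,5)=\Pi(\bar s_2,\bar s_1,5)$, $\bar s_2\bar s_3\bar s_2=\bar s_3\bar s_2\bar s_3$ and $\bar s_3\bar s_4\bar s_3=\bar s_4\bar s_3\bar s_4$ forces a single $\epsilon\in\{\pm 1\}$ with $\varphi_2(\bar s_i)=\epsilon$ for every $i$; in particular $\varphi_2(\bar s_1\bar s_2\bar s_3\bar s_4)=\epsilon^4=1$.

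In contrast with the cases $D_n$, $E_n$ and $F_4$, no classification of homomorphisms into $\SSS_6$ should be needed here, because the Coxeter number of $H_4$ is large. Set $c=s_1s_2s_3s_4\in A[H_4]$ and $\bar c=\pi(c)\in\overline{A[H_4]}$. From $z(c^k)=4k$ it follows that $c$ has infinite order in $A[H_4]$. The Coxeter number of $H_4$ is $h=30$ \citep[Page~80]{Humph1}, and since the longest element of $W[H_4]$ acts as $-\id$ on the reflection representation, the Garside element $\Delta$ is central, so the standard generator of $Z(A[H_4])$ is $\delta=\Delta=c^{15}$. Hence $c^k\in Z(A[H_4])=\langle c^{15}\rangle$ if and only if $15\mid k$, and therefore $\bar c$ has order exactly $15$ in $\overline{A[H_4]}$. (Even granting only that $\delta=c^{30}$, one would get that $\bar c$ has order $30$; all that matters below is that the order of $\bar c$ exceeds $6$.)

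Finally, set $d=\ord(\varphi_1(\bar c))$. Every element of $\SSS_6$ has order at most $6$, so $1\le d\le 6<15$; therefore $\bar c^d\neq 1$ in $\overline{A[H_4]}$, while its order $15/\gcd(15,d)$ is at least $3$, so $\bar c^d$ is a nontrivial torsion element of $\overline{A[H_4]}$. On the other hand $\varphi_1(\bar c^d)=\varphi_1(\bar c)^d=1$ by the choice of $d$, and $\varphi_2(\bar c^d)=(\epsilon^4)^d=1$, so $\bar c^d\in\Ker(\varphi)$. A nontrivial torsion element is in particular a nontrivial generalized torsion element, so $\Ker(\varphi)$ has generalized torsion, and \autoref{corolariocontraejemplos} gives that $A[H_4]$ and $A[A_4]$ are not commensurable. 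The only points requiring care are the determination that $\Delta$ is central in $A[H_4]$ — equivalently that $\delta=(s_1s_2s_3s_4)^{15}$ rather than $(s_1s_2s_3s_4)^{30}$ — and the numerical observation that the order of $\bar c$ exceeds the maximal order $6$ of an element of $\SSS_6$; this inequality is exactly what makes the direct argument possible and what fails for $D_4$ (where $\bar c$ has order $3$) and $F_4$ (where $\bar c$ has order $6$), which is why those cases require the longer arguments given above.
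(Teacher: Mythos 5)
Your proof is correct, and it takes a genuinely different and arguably cleaner route than the paper. The paper's proof classifies the $720$ homomorphisms $\overline{A[H_4]}\to\SSS_6$ with a SageMath computation, observes that all are cyclic, and then exhibits two explicit elements $\alpha,\beta\in\Ker(\varphi)$ with $\alpha\beta\alpha\beta^{-1}=1$. You instead avoid any homomorphism classification entirely: you use that $w_0=-\id$ for $H_4$, so $\delta=\Delta=(s_1s_2s_3s_4)^{h/2}=c^{15}$ with $h=30$, hence $\bar c = \pi(s_1s_2s_3s_4)$ has order exactly $15$ in $\overline{A[H_4]}$ (since $c^k\in\langle c^{15}\rangle$ forces $15\mid k$ because $z(c)=4>0$). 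Then, for any $\varphi=\varphi_1\times\varphi_2$, the order $d$ of $\varphi_1(\bar c)$ in $\SSS_6$ is at most $6<15$ and $\varphi_2(\bar c)=\epsilon^4=1$, so $\bar c^{\,d}$ is a nontrivial torsion element (order $15/\gcd(15,d)\geq 3$) of $\Ker(\varphi)$, and torsion is a special case of generalized torsion. This buys you a computation-free, purely conceptual proof; the trade-off is that it is specific to $H_4$, where the Coxeter number is large relative to the rank (as you correctly note, $\bar c$ has order only $3$ for $D_4$ and $6$ for $F_4$, so the same shortcut is unavailable there, and the paper's case-by-case arguments really are needed for those types). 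All the facts you invoke are standard and consistent with the paper's conventions ($\kappa=1$ for $H_4$ since $-\id\in W[H_4]$; $h(H_4)=30$; $z(\delta)>0$ gives $c$ infinite order).
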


\begin{proof}
We denote by $s_1, s_2, s_3, s_4$ the standard generators of~$A[H_4]$ numbered as in \autoref{coxeter}.
We also consider the quotient homomorphism $\pi: A [H_4] \to \overline{A[H_4]}$ and $\bar s_i = \pi (s_i)$ for every $i \in \{ 1,2,3,4 \}$.
Let $\varphi: \overline{A[H_4]} \to \SSS_{6}\times \{\pm 1\}$ be a homomorphism written in the form $\varphi = \varphi_1 \times \varphi_2$ where $\varphi_1 : \overline{A[H_4]} \to \SSS_{6}$ and $\varphi_2: \overline{A[H_4]} \to \{ \pm 1 \}$ are two homomorphisms.
Notice that the relations $s_3s_4s_3=s_4s_3s_4$, $s_2 s_3 s_2 = s_3 s_2 s_3$, $s_1 s_2 s_1 s_2 s_1 =s_2 s_1 s_2 s_1 s_2$ imply $\varphi_2(\bar s_1)=\varphi_2(\bar s_2) = \varphi_2(\bar s_3)=\varphi_2(\bar s_4)$. Then~$\varphi_2$ is always cyclic. For $\varphi_1$, a direct computation with SageMath (see code in \citealp{code}) shows that there are $720$~homomorphisms from~$\overline{A[H_4]}$ to~$\SSS_6$, all of them cyclic. 
We let $\alpha=\bar s_3^{-1}\bar s_4$ and $\beta=\bar s_3\bar s_4\bar s_3\bar s_1^{-3}$. They both belong to $\Ker(\varphi)$ and they satisfy $\alpha\beta\alpha\beta^{-1}=1$. Therefore, $\Ker(\varphi)$ has generalized torsion and, by \autoref{corolariocontraejemplos}, $A[A_4]$ and~$A[H_4]$ are not commensurable.
\end{proof}

\noindent
Our last issue is to compare~$A [H_3]$ and~$A[A_3]$. In this case we cannot apply \autoref{corolariocontraejemplos}, as we have done with the previous cases. Indeed, we can find homomorphisms sending $\overline{A[H_3]}$ to $\SSS_5$ whose kernel does not have generalized torsion:

\begin{lemma} Let $s_1, s_2, s_3$ be the standard generators of $A[H_3]$ numbered as in \autoref{coxeter}, let $\pi\colon A[H_3]\to \overline{A[H_3]}$ be the quotient homomorphism, and, for each $i\in\{1,2,3\}$, let $\bar s_i = \pi(s_i)$.
Let $\zeta : \overline{A[H_3]} \to \SSS_5$ be the homomorphism defined by
\[
\zeta (\bar s_1) = (2,4)(3,5)\,,\ 
\zeta (\bar s_2) = (1,2)(4,5)\,,\
\zeta (\bar s_3) = (2,3)(4,5)\,.
\]
Then $\Ker (\zeta)$ does not have generalized torsion.
\end{lemma}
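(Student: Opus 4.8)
\emph{Plan.} The plan is to first show that $\Ker(\zeta)$ is exactly the group $\overline{\CA[H_3]}$, sitting inside $\overline{A[H_3]}$ as the finite index subgroup of \autoref{corolariotecnico}, and then to prove that $\overline{\CA[H_3]}$ is biorderable. Since a biorderable group has no nontrivial generalized torsion element (as recalled in the proof of \autoref{notinjective}, cf. \citep{RolZhu1}), this yields the statement.

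\emph{Step 1: $\Ker(\zeta)=\overline{\CA[H_3]}$.} Let $\pi\colon A[H_3]\to\overline{A[H_3]}$ be the quotient homomorphism and $\theta\colon A[H_3]\to W[H_3]$ the canonical projection. Each $\zeta(\bar s_i)$ is an involution, so $\zeta\circ\pi\colon A[H_3]\to\SSS_5$ sends every $s_i^2$ to $1$ and therefore factors as $\zeta\circ\pi=\alpha\circ\theta$ for a homomorphism $\alpha\colon W[H_3]\to\SSS_5$ with $\alpha(s_i)=\zeta(\bar s_i)$. A direct computation gives $\alpha(s_1)\alpha(s_2)=(1,4,3,5,2)$, $\alpha(s_2)\alpha(s_3)=(1,2,3)$ and $\alpha(s_1)\alpha(s_3)=(2,5)(3,4)$, of orders $5$, $3$ and $2$ respectively (which is consistent with the Coxeter data of $H_3$, so $\alpha$ is indeed well defined); moreover $\Im(\alpha)$ is a subgroup of $\SSS_5$ that contains an even element of order $5$ and an even element of order $3$, hence $\Im(\alpha)=\mathfrak{A}_5$. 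Since $|W[H_3]|=120$ and $\mathfrak{A}_5$ is simple, $\Ker(\alpha)$ is the unique subgroup of order $2$ of $W[H_3]$, which must be the center $\langle w_0\rangle$, where $w_0$ denotes the longest element. As $-1\in W[H_3]$, the Garside element $\Delta$ of $A[H_3]$ is central, so $\delta=\Delta$ and $\theta(\delta)=\theta(\Delta)=w_0$. Consequently $\Ker(\zeta\circ\pi)=\theta^{-1}(\langle w_0\rangle)=\CA[H_3]\cup\Delta\,\CA[H_3]$, and, applying $\pi$ and using $\pi(\Delta)=\pi(\delta)=1$, we get $\Ker(\zeta)=\pi(\CA[H_3])=\overline{\CA[H_3]}$ (the last identification by \autoref{tecnico}\,(1)).

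\emph{Step 2: $\overline{\CA[H_3]}$ is biorderable.} By \autoref{tecnico}\,(2) we have $\CA[H_3]\simeq\overline{\CA[H_3]}\times\Z$, so $\overline{\CA[H_3]}$ is a subgroup of $\CA[H_3]$ and it suffices to prove that $\CA[H_3]$ is biorderable. As in the proof of \autoref{tecnico}\,(2), $\CA[H_3]=\pi_1(M)$ where $M$ is the complement in $V_\C$ of the complexified reflection hyperplanes of $W[H_3]$ \citep{Brieskorn}. The reflection arrangement of $W[H_3]$ is supersolvable --- it is one of the few supersolvable irreducible Coxeter arrangements (see, e.g., \citep{Orlik}) --- hence $M$ is a fiber-type arrangement complement. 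Therefore $\CA[H_3]$ is an iterated semidirect product of finitely generated free groups in which, at each step, the acting quotient fixes the homology classes of the meridians of the free kernel, i.e. acts by IA-automorphisms, exactly as for the pure braid group; equivalently, by Falk and Randell's analysis of the lower central series of fiber-type arrangements, $\CA[H_3]$ is residually torsion-free nilpotent. In either formulation it follows that $\CA[H_3]$ is biorderable (IA-automorphisms preserve the Magnus--Fox biordering of a free group, and a routine induction on the semidirect-product tower transports the biordering upwards), hence so is its subgroup $\overline{\CA[H_3]}\simeq\Ker(\zeta)$, which therefore has no nontrivial generalized torsion element.

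\emph{Main obstacle.} Everything in Step 1 is bookkeeping; the only nontrivial input is the biorderability of $\CA[H_3]$ in Step 2, and the crucial point there is that the $H_3$ reflection arrangement is supersolvable, which places us in the pure-braid-group situation. If one prefers to bypass arrangement theory, one can instead exhibit $\CA[H_3]$ explicitly as $F_9\rtimes(F_5\rtimes\Z)$ via the known fibration tower of the discriminant complement of $H_3$ and check directly that each monodromy automorphism acts trivially on the homology of the free fiber; the amount of work is the same.
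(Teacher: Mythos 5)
Your Step 1 is correct, and it takes a slightly cleaner route than the paper's (which compares the index $|\overline{A[H_3]}/\overline{\CA[H_3]}| = 60$ with a SageMath count $|\langle u_1,u_2,u_3\rangle| = 60$): you factor $\zeta\circ\pi$ through $\theta\colon A[H_3]\to W[H_3]$, observe that the induced $\alpha\colon W[H_3]\to\SSS_5$ has image $\mathfrak{A}_5$, and identify $\Ker(\alpha)$ with the center $\langle w_0\rangle$. One wording nit: you should say ``unique \emph{normal} subgroup of order $2$'' --- $W[H_3]$ has many order-$2$ subgroups, but $W[H_3]\cong\mathfrak{A}_5\times\Z/2$ with $\mathfrak{A}_5$ simple forces any normal one to be the center.

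Step 2, however, rests on a false premise: the reflection arrangement of $H_3$ is \emph{not} supersolvable, and the reference you point to (Orlik--Terao, your \citep{Orlik}) in fact records the opposite. By Terao's fibration theorem and the classification of supersolvable reflection arrangements (Terao 1986; Jambu--Terao; Hoge--R\"ohrle), the only irreducible Coxeter arrangements of rank $\geq 3$ that are fiber-type/supersolvable are those of types $A_n$ and $B_n$; the arrangements of types $D_n$ ($n\geq 4$), $F_4$, $E_6$, $E_7$, $E_8$, $H_3$ and $H_4$ are free but not supersolvable. Hence there is no fibration tower with free-group fibers, $\CA[H_3]$ is not an iterated semidirect product $F_9\rtimes(F_5\rtimes\Z)$ of fiber type, and the Falk--Randell lower-central-series analysis does not apply; your proposed ``alternative'' via the discriminant fibration tower is the same claim in different clothing and fails for the same reason (the exponents $1,5,9$ being consistent with such a tower is necessary but nowhere near sufficient). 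The conclusion you want --- that $\CA[H_3]$ is biorderable, hence without nontrivial generalized torsion --- is nevertheless true, but it requires Marin's theorem that pure Artin groups of spherical type are residually torsion-free nilpotent, which is proved by representation-theoretic methods and covers precisely the non-fiber-type cases; this is exactly what the paper invokes as \citep[Theorem~3]{Mar}. As written, your Step 2 does not stand and should be replaced by that citation.
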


\begin{proof}

By \citep{BrieskornSaito}, $Z(A[H_3])$ is an infinite cyclic group generated by $\delta = (s_1 s_2 s_3)^5$.
Let $u_1 = (2,4)(3,5)$, $u_2 = (1,2)(4,5)$ and $u_3 = (2,3)(4,5)$.
A direct computation shows that we have the relations $u_1 u_2 u_1 u_2 u_1 = u_2 u_1 u_2 u_1 u_2$, $u_1 u_3 = u_3 u_1$, $u_2 u_3 u_2 = u_3 u_2 u_3$ and $(u_1 u_2 u_3)^5=1$, hence $\zeta$ is well-defined.
We are going to prove that $\Ker (\zeta) = \overline{\CA [H_3]}$.
As $\overline{\CA [H_3]}$ projects into $\CA [H_3]$ by \autoref{tecnico} and $\CA [H_3]$ has no generalized torsion by \citep[Theorem~3]{Mar}, it will follow that $\Ker (\zeta)$ has no generalized torsion.

\medskip\noindent
Let~$H$ be the subgroup of~$\SSS_5$ generated by $\{ u_1, u_2, u_3 \}$.
A direct computation with SageMath (see code in \citealp{code}) shows that $|H| = 60$.
As $u_1^2 = u_2^2 = u_3^2 = 1$ and $\overline{\CA [H_3]}$ is the normal subgroup of~$\overline{A [H_3]}$ generated by $\{\bar s_1^2, \bar s_2^2, \bar s_3^2 \}$, we have $\overline{\CA [H_3]} \subset \Ker (\zeta)$.
Then, to show that $\Ker (\zeta) =\overline{\CA [H_3]}$, we just need to prove that $|\overline{A [H_3]}/\overline{\CA [H_3]}| = 60$.
It is well known that $|A[H_3]/\CA [H_3]| = |W[H_3]| = 120$  \citep[Page~46]{Humph1}.
The projection $\pi : A [H_3] \to \overline{A [H_3]}$ induces a surjective homomorphism $\bar \pi : A[H_3]/\CA [H_3] \to \overline{A [H_3]}/\overline{\CA [H_3]}$ whose kernel is the cyclic group generated by the class~$[\delta]$ of~$\delta$.
We have that $\delta = \Delta$ is the Garside element of~$A[H_3]$, so $\delta \not\in \CA [H_3]$. However, $\delta^2 = \Delta^2 \in \CA [H_3]$, hence $\Ker (\bar \pi)$ is a cyclic group~$\langle [\delta] \rangle$ of order~$2$, having $|\overline{A [H_3]}/\overline{\CA [H_3]}| = |A[H_3]/\CA [H_3]|/2 = 60$.
\end{proof}

\noindent
Let $\Sigma$ be a closed surface and $\mathcal{P}_n $ be a collection of $n$ different points in~$\Sigma$. With such a pair~$(\Sigma,\mathcal{P}_n)$ we can associate a simplicial complex called the \emph{curve complex} of $(\Sigma,\mathcal{P}_n)$, denoted by $\mathcal{C}(\Sigma,\mathcal{P}_n)$. The vertices of $\mathcal{C}(\Sigma,\mathcal{P}_n)$ are the isotopy classes of simple closed curves on~$\Sigma\setminus \PP_n$ that are non-degenerate. Non-degenerate means that the curve does not bound a disk embedded in $\Sigma$ containing 0 or 1 point of $\PP_n$. Every $n$-simplex is formed by $n+1$ classes having representatives that are pairwise disjoint. We say that a mapping class $f\in\MM^*(\Sigma,\mathcal{P}_n)$ is \emph{pseudo-Anosov} if $f^n(\alpha)\neq \alpha$ for every $\alpha\in\mathcal{C}(\Sigma,\mathcal{P}_n)$ and every $n\in \Z\setminus\{0\}$. We say that $f$ is \emph{periodic} if it has finite order.
The following lemma finishes the proof of \autoref{classification}. 

\begin{lemma}
The groups $A [H_3]$ and $A [A_3]$ are not commensurable.
\end{lemma}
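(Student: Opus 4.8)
The plan is to argue by contradiction and, since the preceding lemma shows that \autoref{corolariocontraejemplos} is no longer available here, to finish with the Nielsen--Thurston classification instead of with generalized torsion. First I would reduce as in the earlier lemmas: if $A[H_3]$ and $A[A_3]$ were commensurable, then by \autoref{tecnico}(3)--(4) and \autoref{CharneyCrisp} there would be an injective homomorphism $\Phi\colon\overline{A[H_3]}\to\MM^*(\Sigma_{0,0},\PP_5)$. Write $\varphi=\hat\theta\circ\Phi=\varphi_1\times\varphi_2$ with $\varphi_1\colon\overline{A[H_3]}\to\SSS_5$ and $\varphi_2\colon\overline{A[H_3]}\to\{\pm1\}$. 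The relations $s_1s_2s_1s_2s_1=s_2s_1s_2s_1s_2$ and $s_2s_3s_2=s_3s_2s_3$ make $\varphi_2$ cyclic, and a SageMath enumeration (as in \citealp{code}) of the homomorphisms $\overline{A[H_3]}\to\SSS_5$ should show that $\varphi_1$ is either cyclic or conjugate to the homomorphism $\zeta$ of the previous lemma. In the cyclic case the three images $\varphi_1(\bar s_i)$ coincide, so, exactly as in the proofs for $D_n$ and $F_4$, one produces $\alpha=\bar s_i\bar s_j^{-1}\neq1$ and a word $\beta$ of exponent sum zero lying in $\Ker(\varphi)$ with $\alpha(\beta\alpha\beta^{-1})=1$; then $\Ker(\varphi)$ has generalized torsion and \autoref{notinjective} contradicts the injectivity of $\Phi$. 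Hence one may assume $\varphi_1=\zeta$, so that $\Phi(\bar s_i)$ acts on $\PP_5$ as the permutation $\zeta(\bar s_i)$ for $i=1,2,3$.

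Next I would isolate the ``rotation'' element of the exceptional rank-$2$ parabolic. Let $P=\langle s_1,s_2\rangle\simeq A[I_2(5)]$, let $\eta=(s_1s_2)^5$ (which generates $Z(P)$), and let $\bar\eta=\pi(\eta)$, where $\pi\colon A[H_3]\to\overline{A[H_3]}$ is the quotient map. Since $\delta=\Delta$ has full support $\{s_1,s_2,s_3\}$ whereas $(s_1s_2)^{5n}$ has support $\{s_1,s_2\}$ for $n\ge1$, no nonzero power of $\eta$ lies in $Z(A[H_3])=\langle\delta\rangle$; hence $\bar\eta$, and a fortiori $\Phi(\bar\eta)$, has infinite order. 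The same support argument gives $P\cap\langle\delta\rangle=\{1\}$, so $\pi|_P$ is injective and $\pi(P)\simeq A[I_2(5)]$ commutes with $\bar\eta$. As $\overline{\CA[I_2(5)]}\simeq F_4$ (by the lemma comparing $A[I_2(p)]$ with $A[A_2]$), \autoref{tecnico}(2) gives $\CA[I_2(5)]\simeq F_4\times\Z$, which contains a non-abelian free group; therefore $Z_{\MM^*(\Sigma_{0,0},\PP_5)}(\Phi(\bar\eta))$ contains a non-abelian free group and in particular is not virtually abelian. By the Nielsen--Thurston classification and McCarthy's description of centralizers in mapping class groups, an infinite-order mapping class of a five-times-punctured sphere with a non-virtually-abelian centralizer can be neither pseudo-Anosov (its centralizer would be virtually cyclic) nor reducible along a multicurve with two components (here the centralizer would be virtually $\Z^2$); so $\Phi(\bar\eta)$ is reducible along a single non-degenerate curve $\gamma$, restricts to a periodic map on each of the two complementary pieces, and a power of it equals a power of the Dehn twist $T_\gamma$. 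Consequently $Z_{\MM^*(\Sigma_{0,0},\PP_5)}(\Phi(\bar\eta))\subseteq\operatorname{Stab}(\gamma)$.

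Finally I would extract the contradiction from the permutation action. Because $\bar\eta$ is central in $\pi(P)$, we get $\Phi(\pi(P))\subseteq Z_{\MM^*(\Sigma_{0,0},\PP_5)}(\Phi(\bar\eta))\subseteq\operatorname{Stab}(\gamma)$. The curve $\gamma$ being non-degenerate, it separates $\PP_5$ into a $2$-element set and a $3$-element set, and every mapping class fixing the isotopy class of $\gamma$ preserves this partition; hence so does its induced permutation of $\PP_5$. Thus $\zeta(\pi(P))=\langle\zeta(\bar s_1),\zeta(\bar s_2)\rangle=\langle(2,4)(3,5),\,(1,2)(4,5)\rangle$ would preserve a partition of $\{1,\dots,5\}$ into a pair and a triple. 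But $\zeta(\bar s_1)\zeta(\bar s_2)$ is a $5$-cycle, so $\langle\zeta(\bar s_1),\zeta(\bar s_2)\rangle$ is a dihedral group of order $10$ acting transitively on $\{1,\dots,5\}$, and it has no invariant subset of size $2$ or $3$. This contradiction finishes the argument.

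The step I expect to be the real obstacle is the middle one: centralizer size alone kills the pseudo-Anosov and two-curve cases, but the stabilizer of a single separating curve on a five-punctured sphere does contain non-abelian free subgroups (its quotient by the twist about that curve is virtually the mapping class group of a four-holed sphere), so one genuinely has to enter the internal Nielsen--Thurston structure of $\Phi(\bar\eta)$ to see that it must be commensurable to a Dehn-twist power; once that is known, the transitivity of the dihedral image of the $I_2(5)$-parabolic closes everything at once. Two routine points to pin down carefully are the SageMath claim that $\zeta$ is, up to conjugacy, the only non-cyclic homomorphism $\overline{A[H_3]}\to\SSS_5$, and the explicit generalized-torsion relation in the cyclic case.
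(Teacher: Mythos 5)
Your route to the non-cyclic case is a genuinely different and interesting argument from the paper's, but it contains a factual slip and a step that is considerably more delicate than the one you replace.

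The reduction to an injective $\Phi\colon\overline{A[H_3]}\to\MM^*(\Sigma_{0,0},\PP_5)$, the splitting $\varphi=\varphi_1\times\varphi_2$, the observation that $\varphi_2$ is cyclic, and the generalized-torsion dispatch of the cyclic case all match the paper. Where you diverge is in the non-cyclic case. The paper observes that $\varphi_1(\bar s_1)$ (for one class) or $\varphi_1(\bar s_1\bar s_2)$ (for the other) is a $5$-cycle, invokes \citep[Proposition~9.4]{Boyland} and \citep[Lemma~5.9]{BP} to conclude that the corresponding element of $\MM^*(\Sigma_{0,0},\PP_5)$ is periodic or pseudo-Anosov, and in either case manufactures a kernel element directly (a power in the periodic case, a relation coming from the virtually cyclic centralizer of a pseudo-Anosov in the other). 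Your argument instead passes to the $I_2(5)$-parabolic $P=\langle s_1,s_2\rangle$, uses a support argument to see that $\bar\eta=\pi((s_1s_2)^5)$ has infinite order in $\overline{A[H_3]}$, notes that $\Phi(\pi(P))$ centralizes $\Phi(\bar\eta)$ and contains a non-abelian free group, and then runs Nielsen--Thurston and McCarthy's centralizer theory to force $\Phi(\bar\eta)$ to be a root of a power of a Dehn twist about a single curve, whence the permutation image of $\pi(P)$ would have to preserve a $2|3$ partition — contradicting transitivity. This is a legitimate alternative: it trades Boyland/Bell--Margalit for centralizer theory. The price is the middle step, which requires you to classify exactly which infinite-order mapping classes of $\Sigma_{0,5}$ can have non-virtually-abelian centralizer (ruling out pseudo-Anosov, pseudo-Anosov-on-a-piece, and two-component canonical reduction systems), a point you yourself flag as the obstacle; the paper's route bypasses all of this by pivoting on elements whose permutation image is a full $5$-cycle.

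The one concrete gap: you assert that a SageMath enumeration ``should show that $\varphi_1$ is either cyclic or conjugate to the homomorphism $\zeta$ of the previous lemma.'' That is not what the computation gives. There are two conjugacy classes of non-cyclic homomorphisms $\overline{A[H_3]}\to\SSS_5$, the one you call $\zeta$ (equal to $\zeta_2$ in the paper, where the $\bar s_i$ go to double transpositions) and a second one, $\zeta_1$, in which $\bar s_1$ and $\bar s_2$ map to $5$-cycles. Your proof as written handles only $\zeta_2$. The method would in fact extend to $\zeta_1$ — there too $\langle\zeta_1(\bar s_1),\zeta_1(\bar s_2)\rangle$ is transitive on $\{1,\dots,5\}$, so the same $2|3$-partition contradiction applies — but the case must be stated and checked, and the erroneous prediction about the SageMath output should be corrected.
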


\begin{proof}
Recall that, by \autoref{tecnico}, we need to prove that~$\overline{A [H_3]}$ and~$\overline{A[A_3]}$ are not commensurable, and, to do this, it is enough to prove that $\Com (\overline{A [H_3]})$ and $\Com (\overline{A [A_3]})$ are not isomorphic.
Also by \autoref{tecnico}, $\overline{A[H_3]}$ injects in $\Com (\overline{A [H_3]})$ and recall that $\Com (\overline{A [A_3]})$ and $\MM^*(\Sigma_{0,0}, \PP_5)$ are isomorphic by \autoref{CharneyCrisp}. 
Then, to prove our lemma it suffices to prove that there is no injective homomorphism from $\overline{A [H_3]}$ to $\MM^* (\Sigma_{0,0}, \PP_5)$.

\bigskip\noindent
Let $\Phi : \overline{ A [H_3]} \to \MM^* (\Sigma_{0,0},\PP_5)$ be a homomorphism. Recall $\hat \theta$ and $\theta'$ defined right before \autoref{CharneyCrisp} and
consider $\varphi = \hat \theta \circ \Phi : \overline{A [H_3]} \to \SSS_5 \times \{ \pm 1\}$ being of the form $\varphi = \varphi_1 \times \varphi_2$, where $\varphi_1 = \theta' \circ \Phi : \overline{A [H_3]} \to \SSS_5$ and $\varphi_2 = \omega \circ \Phi : \overline{A [H_3]} \to \{ \pm 1\}$.
We denote by $s_1, s_2, s_3$ the standard generators of~$A[H_3]$ numbered as in \autoref{coxeter}.
Moreover, we let $\pi: A [H_3] \to \overline{A[H_3]}$ be the quotient homomorphism and $\bar s_i = \pi (s_i)$ for every $i \in \{ 1,2,3\}$.

\bigskip\noindent
Notice that the relations $s_2 s_3 s_2 = s_3 s_2 s_3$ and $s_1 s_2 s_1 s_2 s_1 =s_2 s_1 s_2 s_1 s_2$ imply $\varphi_2(\bar s_2) = \varphi_2(\bar s_3)$ and $\varphi_2(\bar s_1)=\varphi_2(\bar s_2)$.  Notice also that the standard generator of the center of~$A[H_3]$ is $\delta=(s_1 s_2 s_3)^5$, hence $(\bar s_1 \bar s_2 \bar s_3)^5=1$. Let $\epsilon=\varphi_2(\bar s_1)=\varphi_2(\bar s_2)=\varphi_2(\bar s_3)\in \{\pm 1\}$. Then $1=\varphi_2(1)=\varphi_2((\bar s_1 \bar s_2 \bar s_3)^5)= \epsilon^{15}$, having that $\epsilon=1$.

\bigskip\noindent
Suppose that $\varphi_1$ is cyclic, that is, there is $w \in \SSS_5$ such that $\varphi_1(\bar s_1) = \varphi_1(\bar s_2) = \varphi_1(\bar s_3) = w$.
We denote by~$\ord (w)$ the order of~$w$.
As $w \in \SSS_5$, we have $\ord (w) \in \{1,2,3,4,5,6\}$.
On the other hand, as $(\bar s_1 \bar s_2 \bar s_3)^5=1$, we have $w^{15} = 1$, hence $\ord(w)$ divides $15$.
Thus, $\ord (w) \in \{1,3,5\}$.
Now, we let $\alpha = \bar s_2 \bar s_3^{-1}$ and $\beta = (\bar s_2 \bar s_3 \bar s_2)^5$.
Then, $\alpha, \beta \in \Ker (\varphi)$, $\alpha \neq 1$, $\alpha \beta \alpha \beta^{-1} = 1$, and $\Ker (\varphi)$ has generalized torsion. By \autoref{notinjective}, $\Phi$ is not injective.

\bigskip\noindent
Suppose that $\varphi_1$ is not cyclic.
Consider the two homomorphisms $\zeta_1, \zeta_2 : \overline{A [H_3]} \to \SSS_5$ defined by
\begin{gather*}
\zeta_1(\bar s_1) = (1,2,3,4,5)\,,\ 
\zeta_1(\bar s_2) = (1,4,2,3,5)\,,\ 
\zeta_1(\bar s_3) = (1,5,4,3,2)\,,\\
\zeta_2(\bar s_1) = (2,4)(3,5)\,,\ 
\zeta_2(\bar s_2) = (1,2)(4,5)\,,\
\zeta_2(\bar s_3) =(2,3)(4,5)\,.
\end{gather*}

\noindent
A direct computation with SageMath (see code in \citealp{code}) shows that every non-cyclic homomorphism from~$\overline{ A [H_3]}$ to~$\SSS_5$ is conjugate to either~$\zeta_1$ or~$\zeta_2$.
We can then suppose that $\varphi_1 \in \{ \zeta_1, \zeta_2 \}$.

\bigskip\noindent
If $\varphi_1 = \zeta_1$, by \citep[Proposition~9.4]{Boyland} and  \citep[Lemma~5.9]{BP} it follows that $\Phi(\bar s_1)$ is periodic or pseudo-Anosov.
If~$\Phi(\bar s_1)$ is periodic, then there is an integer $k \ge 1$ such that $\Phi(\bar s_1)^k = \id$, hence $\bar s_1^k$ is a non-trivial element of~$\Ker (\Phi)$ and~$\Phi$ is not injective.
Suppose that $\Phi(\bar s_1)$ is pseudo-Anosov.
As $\Phi((\bar s_1 \bar s_2)^5)$ is in the centralizer of~$\Phi (\bar s_1)$ in $\MM^*(\Sigma_{0,0}, \PP_5)$ and the centralizer of a  pseudo-Anosov element is virtually cyclic \citep[Lemma 8.13]{Ivano1}, there are integers $k,\ell \in \Z$, $\ell \neq 0$, such that $\Phi(\bar s_1)^k = \Phi((\bar s_1 \bar s_2)^5)^\ell$.
Let $\alpha = (\bar s_1 \bar s_2)^{5\ell} \bar s_1^{-k}$.
Then $\alpha$ is a non-trivial element of~$\Ker (\Phi)$ and~$\Phi$ is not injective.

\bigskip\noindent
Suppose that $\varphi_1 = \zeta_2$.
Then $\varphi_1(\bar s_1 \bar s_2) = (1, 4, 3, 5, 2)$, and again by \citep[Proposition~9.4]{Boyland} and \citep[Lemma~5.9]{BP}, $\Phi( \bar s_1 \bar s_2)$ is periodic or pseudo-Anosov.
If $\Phi( \bar s_1 \bar s_2)$ is periodic, there is an integer $k \ge 1$ such that $\Phi (\bar s_1 \bar s_2) ^k = \id$ and $\alpha = (\bar s_1 \bar s_2)^k$ is a non-trivial  element belonging to the kernel of~$\Phi$. This means that~$\Phi$ is not injective.
If $\Phi (\bar s_1 \bar s_2)$ is pseudo-Anosov,
then $\Phi ( (\bar s_1 \bar s_2)^5) = \Phi(\bar s_1 \bar s_2)^5$ is also pseudo-Anosov and $\Phi (\bar s_1)$ is in the centralizer of $\Phi ( (\bar s_1 \bar s_2)^5)$ in $\MM^* (\Sigma_{0,0}, \PP_5)$, which is virtually cyclic.  Hence there are integers $k, \ell \in \Z$, $\ell \neq 0$, such that $\Phi(\bar s_1)^\ell = \Phi((\bar s_1 \bar s_2)^5)^k$.
Let $\alpha = (\bar s_1 \bar s_2)^{5k} \bar s_1^{-\ell}$.
Therefore, $\alpha$ is a non-trivial element of~$\Ker (\Phi)$ and~$\Phi$ is not injective.
\end{proof}

\bigskip

\bibliography{Commensurability}

\bigskip\bigskip{\footnotesize%
\textit{ Mar\'{i}a Cumplido, IMB, UMR 5584, CNRS, Univ. Bourgogne Franche-Comt\'e, 21000 Dijon, France} \par
 \textit{E-mail address:} \texttt{\href{mailto:maria.cumplido.cabello@gmail.com}{maria.cumplido.cabello@gmail.com}}
 \medskip
 
\textit{ Luis Paris, IMB, UMR 5584, CNRS, Univ. Bourgogne Franche-Comt\'e, 21000 Dijon, France} \par
 \textit{E-mail address:} \texttt{\href{mailto:lparis@u-bourgogne.fr}{lparis@u-bourgogne.fr}} 
 
 }

\end{document}